\documentclass[12pt]{amsart}
\usepackage{amssymb}
\usepackage{bbm}
\usepackage{amsfonts}
\usepackage{latexsym}
\usepackage[all]{xy}
\usepackage{amscd}
\usepackage{comment}
\usepackage{fullpage}
\usepackage{pstricks}
\usepackage[bookmarks=false, colorlinks, linkcolor=mylinkcolor, 
citecolor=mycitecolor]{hyperref}
\definecolor{mycitecolor}{rgb}{0,.6,0} 
\definecolor{mylinkcolor}{rgb}{0,0,.8}   

\numberwithin{equation}{section}
\newtheorem{theorem}[equation]{Theorem}
\newtheorem{lemma}[equation]{Lemma}

\newtheorem{corollary}[equation]{Corollary}
\theoremstyle{definition}

\newtheorem{example}[equation]{Example}

\newtheorem{remark}[equation]{Remark}



\newcommand{\DOT}{\setlength{\unitlength}{1pt}\begin{picture}(2.5,2)
                  (1,1)\put(2,3.5){\circle*{2}}\end{picture}}

\newcommand{\bu}{\DOT}
\newcommand{\la}{\langle}
\newcommand{\ra}{\rangle}
\newcommand{\ot}{\otimes}
\newcommand{\Wedge}{\textstyle\bigwedge}
\newcommand{\lexp}[2]{{\vphantom{#2}}^{#1}{#2}}

\newcommand{\N}{\mathbb{N}}

\newcommand{\Z}{\mathbb{Z}}
\renewcommand{\O}{\{0,1\}}

\newcommand{\C}{\mathcal{C}}
\newcommand{\minusone}{\bf -\underline{1}}
\newcommand{\CC}{\mathbb{C}}
\newcommand{\R}{\mathcal{R}}
\newcommand{\ds}{\displaystyle}
\newcommand{\mH}{\mathcal{H}}

\DeclareMathOperator{\ddet}{det}
\DeclareMathOperator{\Ker}{Ker}
\DeclareMathOperator{\Hom}{Hom}
 
\DeclareMathOperator{\HH}{HH}
\DeclareMathOperator{\Ext}{Ext}

\DeclareMathOperator{\GL}{GL}

\renewcommand{\Im}{\operatorname{Im}}
\renewcommand{\span}{\operatorname{span}}

\begin{document}

\title[Quantum Drinfeld Hecke algebras]
{Hochschild cohomology and \\
quantum Drinfeld Hecke algebras}
\date{January 05, 2014} 
\subjclass[2010]{16E40, 16S35}

\author{Deepak Naidu}
\email{dnaidu@math.niu.edu}
\address{Department of Mathematical Sciences, Northern Illinois 
University, DeKalb, Illinois 60115, USA}
\author{Sarah Witherspoon}
\email{sjw@math.tamu.edu}
\address{Department of Mathematics, Texas A\&M University,
College Station, Texas 77843, USA}
\thanks{The second  author was partially supported by
NSF grant \#DMS-0800832 and Advanced Research Program Grant 
010366-0046-2007
from the Texas Higher Education Coordinating Board.}

\begin{abstract}
Quantum Drinfeld Hecke algebras are generalizations of Drinfeld Hecke 
algebras in which
polynomial rings are replaced by  quantum polynomial rings. 
We identify these  algebras as deformations of skew group algebras, giving 
an
explicit  connection to Hochschild cohomology.
We compute the relevant part of Hochschild cohomology for actions of 
many reflection groups and we exploit computations from \cite{NSW} 
for diagonal actions. By combining our work with recent results of 
Levandovskyy and
Shepler \cite{LS}, we produce examples of quantum Drinfeld Hecke algebras.
These algebras  generalize the braided Cherednik algebras of 
Bazlov and Berenstein \cite{BB}. 
\end{abstract}

\maketitle

\begin{section}{Introduction}

Let $G$ be a finite group acting linearly on a complex vector space $V$,
let $S(V)$ be the symmetric algebra on $V$, 
and let $S(V)\rtimes G$ be the corresponding skew group algebra (defined 
in the next
section).
Drinfeld (or graded) Hecke algebras manifest themselves
as deformations of these skew group algebras.
These deformations go by many other names, such as symplectic reflection 
algebras or 
rational Cherednik algebras, and have arisen in such diverse areas
as representation theory, combinatorics, and orbifold theory
\cite{C,D,EG,G,L,SW1}. 

In this note we
replace the symmetric algebra $S(V)$ with a quantum or twisted version:
Let 
$$
 S_{\bf q}(V) := \CC\la v_1,\ldots,v_n \mid v_iv_j = q_{ij}v_jv_i  \mbox{ 
for all }
      1\leq i,j\leq n \ra,
$$
the {\bf quantum symmetric algebra} determined by a basis $v_1,\ldots,v_n$ 
of $V$ and
a tuple ${\bf q}:= (q_{ij})$
of nonzero scalars for which $q_{ii}=1$ and $q_{ji}=q_{ij}^{-1}$ for all 
$i,j$.
The possible actions of the finite group $G$ on $S_{\bf q}(V)$ by linear 
automorphisms
are somewhat limited. Alev and Chamarie \cite{AC}
gave some results, but not a complete classification, of such actions.
Kirkman, Kuzmanovich, and Zhang \cite{KKZ} described actions of some
generalized reflection groups and proved a quantum version of
the classical Shephard-Todd-Chevalley Theorem; one consequence is that 
invariants of $S_{\bf q}(V)$ under these actions again form quantum 
symmetric
algebras. 

Bazlov and Berenstein \cite{BB} explored analogs of Cherednik algebras
in this context, termed  braided Cherednik algebras.
More generally:
Let $\kappa: V\times V\rightarrow \CC G$ be a bilinear map for which 
$\kappa(v_i,v_j) = - q_{ij}\kappa(v_j,v_i)$. 
Let $T(V)$ be the tensor algebra on $V$, 
and let 
$$
  \mH_{\mathbf{q}, \kappa} := T(V)\rtimes G/(  v_iv_j - q_{ij} v_jv_i - 
\kappa(v_i,v_j) \mid
     1\leq i,j\leq n),
$$
the quotient of the skew group algebra 
$T(V)\rtimes G$ by the ideal generated by all elements of the form 
specified.
Giving each $v_i$ degree 1 and each group element $g$  degree 0, 
$\mH_{\mathbf{q}, \kappa}$ is a
filtered algebra. We call $\mH_{\mathbf{q}, \kappa}$ a {\bf quantum 
Drinfeld Hecke algebra} if its
associated graded algebra is isomorphic to $S_{\bf q}(V)\rtimes G $.
In case all $q_{ij}=1$, these are the Drinfeld (or graded) Hecke algebras. 
Levandovskyy and Shepler \cite{LS} gave necessary and sufficient
conditions on the functions
$\kappa$  for $\mH_{\mathbf{q}, \kappa}$ to be a quantum Drinfeld Hecke 
algebra.

In this paper, we view these quantum  analogs of Drinfeld Hecke algebras
as deformations of the skew group algebras 
$S_{\bf q}(V)\rtimes G$. We establish the following theorem, which makes 
explicit a connection to 
Hochschild cohomology, thus forging another path to understanding these 
and related deformations.
The notation and terminology used below is explained in Section~\ref{QDHAs 
and deformations}.

\medskip
\noindent
{\bf Theorem \ref{main-thm}.} \textit{
The quantum Drinfeld Hecke algebras over $\CC[t]$ are precisely the 
deformations of
$S_{\bf q}(V)\rtimes G$ over $\CC[t]$ with $\deg\mu_i = -2i$ for all 
$i\geq 1$.
}
\medskip

With Shroff \cite{NSW} we computed the Hochschild cohomology of
$S_{\bf q}(V)\rtimes G$ in case $G$ acts {\em diagonally} on the chosen 
basis 
$\{v_i\}_{1\leq i\leq n}$ of $V$.
Here we consider more general actions, focusing on that
part of Hochschild cohomology in degree 2 that is relevant to quantum
Drinfeld Hecke algebras. We apply the criteria of Levandovskyy and Shepler 
\cite{LS} to show that when the action of $G$ extends to an action on a 
quantum exterior algebra, 
all such Hochschild 2-cocycles do indeed give rise to quantum Drinfeld 
Hecke algebras:

\medskip
\noindent
{\bf Theorem \ref{thm: constant}.} \textit{
Assume that the action of $G$ on $V$ extends to an action on $\Wedge_{\bf 
q}(V)$
by algebra automorphisms. Then each constant Hochschild 2-cocycle on 
$S_{\bf q}(V)\rtimes G$
gives rise to a quantum Drinfeld Hecke algebra.
}
\medskip

Combining the previous two theorems, we obtain the following.

\medskip
\noindent
{\bf Theorem \ref{thm: constant lifts}.} \textit{
Assume that the action of $G$ on $V$ extends to an action on $\Wedge_{\bf 
q}(V)$
by algebra automorphisms. Then each constant Hochschild 2-cocycle on 
$S_{\bf q}(V)\rtimes G$
lifts to a deformation of $S_{\bf q}(V)\rtimes G$ over $\CC[t]$. 
}
\medskip

We compute the relevant part of the Hochschild cohomology of $S_{\bf 
q}(V)\rtimes G$ in degree 2 for 
several types of complex reflection groups and actions. 
Our deformations include all of the braided Cherednik algebras of Bazlov
and Berenstein \cite{BB}, putting them in a larger context: 
Their vector space $V$ is always the direct sum of a vector space
and its dual, and they have some mild additional restrictions on 
the structure of the corresponding deformations of $S_{\bf q}(V)\rtimes G$.

We will work over the complex numbers $\CC$, and all tensor products will 
be
taken over $\CC$ unless otherwise indicated.

\vspace{0.1in}

\noindent {\bf Organization.} This paper is organized as follows. 

We prove Theorem~\ref{main-thm} in Section~\ref{QDHAs and deformations}.
Section~\ref{two resolutions} develops the homological algebra needed for 
Section \ref{LS-Theorem},
in which we will obtain results on the Hochschild 2-cocycles associated to 
quantum
Drinfeld Hecke algebras. In particular, we prove Theorem~\ref{thm: 
constant}.

In Section~\ref{diagonal}, using results from \cite{NSW} we classify 
quantum Drinfeld Hecke algebras 
for diagonal actions of $G$ on a chosen basis for $V$. 

In Sections~\ref{natural} and \ref{symplectic}, we consider the natural 
and symplectic representations of several types of 
complex reflection groups. In each case we classify the corresponding 
quantum Drinfeld Hecke algebras by computing the
relevant part of the Hochschild cohomology of $S_{\bf q}(V)\rtimes G$ in 
degree 2.

\end{section}
\begin{section}{Quantum Drinfeld Hecke algebras and deformations of 
$S_{\bf q}(V)\rtimes G$} \label{QDHAs and deformations}

Assume the finite group $G$ acts linearly on the complex vector space $V$, 
and
that there is an induced action on $S_{\bf q}(V)$ by algebra 
automorphisms. 
Then we may form the {\bf skew group algebra} $S_{\bf q}(V)\rtimes G$, and 
we
recall its definition:
Letting $A=S_{\bf q}(V)$, additively $A\rtimes G$ 
is the free left $A$-module with basis $G$.
We write  $A\rtimes G = \oplus_{g\in G}A_g$,
where $A_g = \{a g\mid a\in A\}$, that is for each $a\in A$ and
$g\in G$ we denote by $a g\in A_g$ the $a$-multiple of~$g$.
Multiplication on $A\rtimes G$ is determined by 
$$
   (a g) (b  h) := a (\lexp{g}{b})  gh
$$
for all $a,b\in A$ and $g,h\in G$, where a left superscript denotes
the action of the group element. 
Similarly we define the skew group 
algebra for any other algebra on which $G$ acts by automorphisms,
such as the tensor algebra $T(V)$.

We will show how quantum Drinfeld Hecke algebras may be realized as
deformations of $S_{\bf q}(V)\rtimes G$ by extending the scalars to 
$\CC[t]$:
For any algebra $R$ over $\CC$, a {\bf deformation of $R$ over $\CC[t]$}
is an associative $\CC[t]$-algebra whose underlying vector space is
$R[t] = \CC[t]\ot R$, and multiplication is
$$
  r*s = rs + \mu_1(r\ot s) t + \mu_2(r\ot s)t^2 + \cdots
$$
for all $r,s\in R$, where $rs$ is the product in $R$, the
$\mu_i :R\ot R \rightarrow R$ are $\CC$-linear maps extended to
be linear over $\CC[t]$, and for each $r,s$ the above sum is finite.
One consequence of associativity is that $\mu_1$ is a {\bf Hochschild 
2-cocycle},
that is 
\begin{equation}\label{eqn:2-cocycle-condn}
  \mu_1(r\ot s) u + \mu_1(rs\ot u) = \mu_1(r\ot su) + r\mu_1(s\ot u)
\end{equation}
for all $r,s,u\in R$. 

Let $\kappa: V\times V\rightarrow \CC G$ be a function as specified in the 
introduction. 
For each $g\in G$, let $\kappa_g: V\times V\rightarrow \CC$ be the 
function determined by the
condition 
\[
\kappa(v,w) = \sum_{g \in G} \kappa_g(v,w)g \qquad \text{ for all }  v, w 
\in V.
\]
(The condition $\kappa(v_i,v_j) = -q_{ij}\kappa(v_j,v_i)$ implies that 
$\kappa_g(v_i,v_j) = -q_{ij}\kappa_g(v_j,v_i)$ for each $g \in G$.
It arises when interchanging $i$ and $j$ in the defining relations of 
$\mH_{\mathbf{q}, \kappa}$; in the absence of this condition,
$\mH_{\mathbf{q}, \kappa}$ is too small in the sense that the group $G$ 
does not
embed in $\mH_{\mathbf{q}, \kappa}$ as a subgroup of its group of units.)
Let 
$$
  \mH_{\mathbf{q}, \kappa, t}:= T(V)\rtimes G[t]/ ( v_iv_j - q_{ij} v_jv_i 
- \sum_{g\in G} \kappa_g(v_i,v_j) t g
    \mid 1\leq i,j\leq n).
$$
Giving each $v_i$
degree 1 and each $g\in G$ and $t$ degree 0, we see that $\mH_{\mathbf{q}, 
\kappa, t}$ is a
filtered algebra. We are interested in those algebras $\mH_{\mathbf{q}, 
\kappa, t}$ for which the associated
graded algebra is isomorphic to $S_{\bf q}(V)\rtimes G [t]$; call these 
algebras
{\bf quantum Drinfeld Hecke algebras over $\CC[t]$}.
Specializing to $t=1$, these are the quantum Drinfeld Hecke algebras as
defined in the introduction.

We next prove that quantum Drinfeld Hecke algebras over $\CC[t]$
are all of the deformations of $S_{\bf q}(V)\rtimes G$ of a particular 
form.
The proof of Theorem \ref{main-thm} below is a straightforward 
generalization
of a special case 
of \cite[Theorem 3.2]{Wi}. We include a proof as we will need some of the 
details
and wish 
to highlight the homological meaning of the quantum skew-symmetry of the 
functions $\kappa_g$.

The {\bf quantum exterior algebra} $\Wedge _{\bf q}(V)$ associated
to the tuple ${\bf q} = (q_{ij})$ is
$$
  \Wedge_{\bf q}(V):= \CC\langle v_1,\ldots,v_n\mid v_iv_j = -q_{ij}v_jv_i
   \mbox{ for all } 1\leq i,j\leq n\rangle.
$$
Since we are working in characteristic 0, 
the defining relations imply in particular that 
$v_i^2=0$ for each $v_i$ in $\Wedge_{\bf q}(V)$.
This algebra has a basis given by all $v_{i_1}\cdots v_{i_m}$ ($0\leq 
m\leq n$,
$1\leq i_1<\cdots <i_m\leq n$); we will write such a basis element as 
$v_{i_1}\wedge
\cdots \wedge v_{i_m}$ by analogy with the ordinary exterior algebra.

In the theorem below, by the {\bf degree} of $\mu_i$, we mean its degree 
as a function
from the graded algebra $(A\rtimes G)^{\ot 2}$ to $A\rtimes G$. 
The theorem gives a one-to-one correspondence between quantum Drinfeld 
Hecke algebras
over $\CC[t]$ and deformations of $S_{\bf q}(V)\rtimes G$ over $\CC[t]$ 
satisfying a
condition on the degrees of the functions $\mu_i$.

\begin{theorem}\label{main-thm}
The quantum Drinfeld Hecke algebras over $\CC[t]$ are precisely the 
deformations of
$S_{\bf q}(V)\rtimes G$ over $\CC[t]$ with $\deg\mu_i = -2i$ for all 
$i\geq 1$.
\end{theorem}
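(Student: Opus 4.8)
The plan is to prove the two inclusions separately, using the Diamond/PBW-type criterion implicit in the definition of ``quantum Drinfeld Hecke algebra over $\CC[t]$'' (the associated graded algebra is $S_{\bf q}(V)\rtimes G[t]$) and the standard dictionary between filtered deformations and graded Hochschild cocycles.

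\medskip
\noindent\textbf{From quantum Drinfeld Hecke algebras to deformations.}
First I would start with a quantum Drinfeld Hecke algebra $\mH_{\mathbf{q},\kappa,t}$ over $\CC[t]$. Since its associated graded algebra is $S_{\bf q}(V)\rtimes G[t]$, as a $\CC[t]$-module it is free with the PBW-type basis $\{v_{i_1}\cdots v_{i_m}\, g : 1\leq i_1\leq\cdots\leq i_m\leq n,\ g\in G\}$; hence its underlying $\CC[t]$-module is $(S_{\bf q}(V)\rtimes G)[t]$. I would then transport the multiplication of $\mH_{\mathbf{q},\kappa,t}$ along this PBW identification to obtain a product $*$ on $(S_{\bf q}(V)\rtimes G)[t]$. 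Writing $r*s = rs + \mu_1(r\ot s)t + \mu_2(r\ot s)t^2+\cdots$, associativity of $\mH_{\mathbf{q},\kappa,t}$ makes this a deformation in the sense defined above. The key point is the degree bookkeeping: the defining relation rewrites the degree-$2$ element $v_iv_j$ as $q_{ij}v_jv_i$ (degree $2$) plus $\sum_g\kappa_g(v_i,v_j)\,t\,g$ (which has degree $0$ in $A\rtimes G$). Since reordering monomials in $S_{\bf q}(V)$ is effected by iterating this straightening relation, each application of $t^i$ drops the $(A\rtimes G)$-degree by exactly $2i$; a short induction on the length of monomials shows $\mu_i$ is homogeneous of degree $-2i$. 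This is where the quantum skew-symmetry condition $\kappa_g(v_i,v_j)=-q_{ij}\kappa_g(v_j,v_i)$ enters: it is exactly what makes the two ways of straightening $v_iv_j$ consistent, equivalently what makes $\mu_1$ a well-defined Hochschild $2$-cocycle rather than forcing extra relations that collapse the algebra.

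\medskip
\noindent\textbf{From deformations to quantum Drinfeld Hecke algebras.}
Conversely, suppose $\ast$ is a deformation of $S_{\bf q}(V)\rtimes G$ over $\CC[t]$ with $\deg\mu_i=-2i$ for all $i\geq 1$. I would compute $v_i*v_j$ for the basis vectors: the degree constraint forces $\mu_i(v_j\ot v_k)$ to lie in the degree-$(2-2i)$ part of $A\rtimes G$, which vanishes for $i\geq 2$ and lands in $\CC G$ (degree $0$) for $i=1$. Hence $v_i*v_j = v_iv_j + \mu_1(v_i\ot v_j)t$ with $\mu_1(v_i\ot v_j)\in\CC G$; setting $\kappa(v_i,v_j):=\mu_1(v_i\ot v_j)$ and $\kappa_g$ its components, the cocycle condition \eqref{eqn:2-cocycle-condn} applied to $(v_i,v_j,-)$ together with the relation $v_j*v_i = v_jv_i + \mu_1(v_j\ot v_i)t$ and $v_iv_j=q_{ij}v_jv_i$ in $S_{\bf q}(V)$ yields $\kappa(v_i,v_j)=-q_{ij}\kappa(v_j,v_i)$. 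Thus the deformed algebra is a quotient of $T(V)\rtimes G[t]$ in which $v_iv_j-q_{ij}v_jv_i-\sum_g\kappa_g(v_i,v_j)tg$ holds, i.e.\ there is a surjection $\mH_{\mathbf{q},\kappa,t}\twoheadrightarrow (S_{\bf q}(V)\rtimes G)[t]_{\ast}$; since the deformation is a free $\CC[t]$-module with the same PBW basis, comparing ranks in each filtration degree shows this surjection is an isomorphism, and taking associated graded algebras gives $\operatorname{gr}\mH_{\mathbf{q},\kappa,t}\cong S_{\bf q}(V)\rtimes G[t]$, so $\mH_{\mathbf{q},\kappa,t}$ is a quantum Drinfeld Hecke algebra over $\CC[t]$.

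\medskip
\noindent\textbf{Main obstacle.}
The genuinely nontrivial step is the degree computation $\deg\mu_i=-2i$ in the first direction — i.e.\ checking that no ``lower-order'' terms of intermediate degree can appear when one straightens an arbitrary monomial in $\mH_{\mathbf{q},\kappa,t}$. This requires knowing that the PBW basis really does reduce every word, which is where one must invoke that $\operatorname{gr}\mH_{\mathbf{q},\kappa,t}\cong S_{\bf q}(V)\rtimes G[t]$ by hypothesis (so there are no hidden relations), and then run a careful induction on word length tracking the power of $t$ against the drop in $(A\rtimes G)$-degree. Everything else — the cocycle identity, the skew-symmetry, the rank comparison — is routine bookkeeping once this homogeneity is in hand, and, as the authors note, closely parallels \cite[Theorem 3.2]{Wi}.
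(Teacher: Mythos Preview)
Your overall architecture matches the paper's: handle the two directions separately, use the PBW basis coming from $\operatorname{gr}\mH_{\mathbf{q},\kappa,t}\cong S_{\bf q}(V)\rtimes G[t]$ in the forward direction, and in the converse build an algebra map from $T(V)\rtimes G[t]$ onto the deformation and compare sizes. There is, however, a genuine gap in your converse.

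You set $\kappa(v_i,v_j):=\mu_1(v_i\ot v_j)$ and then assert that the cocycle condition, together with $v_iv_j=q_{ij}v_jv_i$, forces $\mu_1(v_i\ot v_j)=-q_{ij}\mu_1(v_j\ot v_i)$. This is false: a degree $-2$ Hochschild $2$-cocycle need not be quantum skew-symmetric on $V\times V$. One may alter any such $\mu_1$ by a coboundary $d\gamma$ with $\gamma$ of degree $-2$ and stay within the class of deformations satisfying $\deg\mu_i=-2i$ (conjugate the product by $\mathrm{id}+t\gamma$); but $d\gamma(v_i\ot v_j)=v_i\gamma(v_j)-\gamma(v_iv_j)+\gamma(v_i)v_j$ is, up to the $q$-twist, \emph{symmetric} in $i,j$, not skew. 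With your $\kappa$ the relation $v_i*v_j-q_{ij}\,v_j*v_i=\kappa(v_i,v_j)t$ simply does not hold in $B$, so no map $\mH_{\mathbf{q},\kappa,t}\to B$ is produced and the rank comparison never gets started.

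The repair is exactly what the paper does: compute
\[
v_i*v_j - q_{ij}\,v_j*v_i \;=\; \bigl(\mu_1(v_i\ot v_j)-q_{ij}\mu_1(v_j\ot v_i)\bigr)\,t
\]
and \emph{define} $\kappa(v_i,v_j):=\mu_1(v_i\ot v_j)-q_{ij}\mu_1(v_j\ot v_i)$, the quantum skew-symmetrization. This $\kappa$ is quantum skew-symmetric by construction (interchange $i$ and $j$ and multiply by $-q_{ij}$), and now the defining relations of $\mH_{\mathbf{q},\kappa,t}$ visibly lie in the kernel of the map $T(V)\rtimes G[t]\to B$, after which your surjectivity and dimension-count argument goes through and agrees with the paper. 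A minor related correction in your forward direction: the skew-symmetry of $\kappa$ is a \emph{hypothesis} built into $\mH_{\mathbf{q},\kappa,t}$, making the $(i,j)$ and $(j,i)$ relations scalar multiples of one another; it is not what makes $\mu_1$ a cocycle---that is a consequence of associativity of $*$.
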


\begin{proof}
Let $\mH_{\mathbf{q}, \kappa, t}$ be a quantum Drinfeld Hecke algebra over 
$\CC[t]$.
By its definition, this implies that $\mH_{\mathbf{q}, \kappa, t}$ is a 
deformation of 
$S_{\bf q}(V)\rtimes G$ over $\CC[t]$.
Specifically, since the associated graded algebra of $\mH_{\mathbf{q}, 
\kappa, t}$ is isomorphic
to $S_{\bf q}(V)\rtimes G[t]$, and $S_{\bf q}(V)$ has a basis
consisting of all monomials $v_1^{i_1}\cdots v_n^{i_n}$, each element of
$\mH_{\mathbf{q}, \kappa, t}$ may be written uniquely as a $\CC[t]$-linear 
combination of elements
of the form $v_1^{i_1}\cdots v_n^{i_n} g$. Let $r=v_1^{i_1}\cdots 
v_n^{i_n}g$ and
$s=v_1^{j_1}\cdots v_n^{j_n} h$ be two such basis elements of 
$\mH_{\mathbf{q}, \kappa, t}$.
Denoting the product in $\mH_{\mathbf{q}, \kappa, t}$ by $*$,
since $\mH_{\mathbf{q}, \kappa, t}$ is defined as a quotient of 
$T(V)\rtimes G[t]$, we have 
$$
   r*s = v_1^{i_1}\cdots v_n^{i_n} * 
\lexp{g}{(v_1^{j_1}\cdots v_n^{j_n})}gh,
$$
and applying the relations defining $\mH_{\mathbf{q}, \kappa, t}$ 
repeatedly, we obtain an
expression of the form
$$
   r*s = rs + \mu_1(r\ot s)t+\mu_2(r\ot s)t^2 + \cdots .
$$
The sum will be finite since each time a relation is applied, the degree 
drops.
The product $*$ makes $\mH_{\mathbf{q}, \kappa, t}$ an associative algebra 
by definition, and consequently the functions
$\mu_i$ will be bilinear. Therefore $\mH_{\mathbf{q}, \kappa, t}$ is a 
deformation of 
$S_{{\bf q}}(V)\rtimes G$ over $\CC[t]$. The conditions on the degrees of
the $\mu_i$ follow from the relations and induction on the degree 
$\sum_{k=1}^n (i_k+j_k)$ of a product $v_1^{i_1}\cdots v_n^{i_n} * 
v_1^{j_1}\cdots
v_n^{j_n}$.

Conversely, suppose that $B$ is a deformation of $S_{\bf q}(V)\rtimes G$
over $\CC[t]$ satisfying the given degree conditions.
Then, as a vector space over $\CC[t]$, $B \cong S_{\bf q}(V)\rtimes G[t]$.
Define a map $\phi: T(V)\rtimes G[t]\rightarrow B$ by first requiring
$$
   \phi(v_i) = v_i \ \ \ \mbox{ and } \ \ \ \phi(g)=g
$$
for all $i$, $1\leq i\leq n$, and $g\in G$.
Since $T(V)$ is free on a basis of $V$, the map $\phi$ on the $v_i$ may be
extended uniquely to an algebra homomorphism from $T(V)$ to $B$.
By the degree condition on the $\mu_i$, we have $\mu_i(\CC G, \CC G) =
\mu_i(\CC G, V) = \mu_i(V, \CC G) =0$, so $\phi$ may be extended to a 
$\CC[t]$-algebra
homomorphism on all of $T(V)\rtimes G[t]$, as desired.
Specifically,
$\phi(v_{i_1}\cdots v_{i_m} g ) = v_{i_1} * \cdots * v_{i_m} * g$. By the 
degree
requirements, we have for example $ g*v_{i_1}*v_{i_2} = 
(g*v_{i_1})*v_{i_2} = \lexp{g}{v_{i_1}} *g
*v_{i_2} = \lexp{g}{v_{i_1}} * \lexp{g}{v_{i_2}} * g$. 

We claim that $\phi$ is surjective. We will prove that each basis element
is in the image of $\phi$ by induction on the degree of the basis monomial.
First note that $\phi(g)=g$ and $\phi(v_ig) = v_ig$ for all $i$, $1\leq 
i\leq n$,
and all $g\in G$.
Now let $v_{i_1}\cdots v_{i_m} g$ be an arbitrary basis monomial of $B$.
By induction, $v_{i_2}\cdots v_{i_m}g$ is in the image of $\phi$, say 
$\phi(X) = v_{i_2}\cdots v_{i_n} g$ for some $X\in T(V)\rtimes G[t]$.
Then
\begin{eqnarray*}
   \phi(v_{i_1}X ) & = & v_{i_1} * \phi(X) \\
  &=& v_{i_1} * (v_{i_2}\cdots v_{i_m}g)\\
  &=& v_{i_1}\cdots v_{i_m} g + \mu_1(v_{i_1}, v_{i_2}\cdots v_{i_m}g)t 
    + \mu_2(v_{i_1},v_{i_2}\cdots v_{i_m}g ) t^2 + \cdots
\end{eqnarray*}
By induction, since $\deg(\mu_i) = -2i$, each $\mu_j(v_{i_1} , 
v_{i_2}\cdots
v_{i_m}g)$ is in the image of $\phi$. 
Therefore $v_{i_1}\cdots v_{i_m}g$ is in the image of $\phi$,
which implies $\phi$ is surjective.

Finally we determine the kernel of $\phi$.
Note that
\begin{eqnarray*}
  \phi(v_iv_j) & = & v_i*v_j \ \ = \ \ v_iv_j + \mu_1(v_i\ot v_j) t\\
  \phi(v_jv_i) & = & v_j*v_i \ \ = \ \  v_jv_i + \mu_1(v_j\ot v_i)t
\end{eqnarray*}
since $\deg(\mu_i)=-2i$ for all $i$.
As $v_jv_i = q_{ji}v_iv_j$ in $S_{\bf q}(V)$, we find 
$$
  \phi(v_jv_i - q_{ji}v_iv_j) = (\mu_1(v_j\ot v_i)-q_{ji}\mu_1(v_i\ot 
v_j))t.
$$
Since $\deg(\mu_1) = -2$ and $\phi(g)=g$ for all $g\in G$, this implies 
that
\begin{equation}\label{eqn:I[t]-gens}
  v_jv_i - q_{ji} v_iv_j - (\mu_1(v_j\ot v_i) - q_{ji}\mu_1(v_i\ot v_j))t
\end{equation}
is in the kernel of $\phi$ for all $i,j$, and that
$$
  \mu_1(v_j\ot v_i)-q_{ji}\mu_1(v_i\ot v_j) =
   \sum_{g\in G} \kappa_g(v_j , v_i) g
$$
for some functions $\kappa_g$. By interchanging $i,j$ we find that
$\kappa_g(v_i , v_j) = -q_{ij}\kappa_g(v_j , v_i)$, and by definition each
$\kappa_g$ is linear, so we may view each $\kappa_g$ as a linear function 
on
$\Wedge^2_{\bf q}(V)$, or equivalently as a bilinear function on $V\times 
V$
that satisfies $\kappa_g(v_i,v_j)= - q_{ij}\kappa_g(v_j,v_i)$ for all 
$i,j$. 
Let $I[t]$ be the ideal of $T(V)\rtimes G[t]$ generated by all such 
expressions (\ref{eqn:I[t]-gens}), so that $I[t]\subset \Ker\phi$.
We claim that $I[t]=\Ker\phi$: By the form of the relations, 
as a vector space $T(V)\rtimes G[t]/I[t]$ is a quotient of $S_{\bf 
q}(V)\rtimes G[t]$, so
has dimension in each degree no greater than that of $S_{\bf q}(V)\rtimes 
G[t]$.
Since $\phi$ induces a map from $T(V)\rtimes G[t]/I[t]$ onto the vector 
space
$B \cong S_{\bf q}(V)\rtimes G[t]$,
this forces $I[t] =\Ker \phi$. Therefore $B$ is a quantum Drinfeld
Hecke algebra.
\end{proof}

In particular, if we search for quantum Drinfeld Hecke algebras,
Theorem \ref{main-thm} 
shows that we might first determine the Hochschild two-cocycles
$\mu_1$ of degree $-2$ as maps from $(A\rtimes G)^{\ot 2}$ to $A\rtimes G$.
We will call these {\bf constant} Hochschild 2-cocycles; this choice of 
terminology
will be justified by results of the next section,
where we recall and develop the needed tools from homological algebra.
As a consequence we will show in Theorem \ref{thm: constant}
that in fact 
{\em all} constant Hochschild 2-cocycles give rise to quantum Drinfeld
Hecke algebras under the condition that the action of $G$ extends to
an action on $\Wedge_{\bf q}(V)$ by algebra automorphisms. 

One outcome of the above proof is an explicit relationship between
the functions $\kappa_g$ and the Hochschild 2-cocycles $\mu_1$:
$$
   \sum_{g\in G} \kappa_g(v_j,v_i)g = \mu_1(v_j\ot v_i) - 
q_{ji}\mu_1(v_i\ot v_j).
$$

\end{section}

\begin{section}{Two resolutions} \label{two resolutions}

In this section we develop the homological algebra needed for Section 
\ref{LS-Theorem},
in which we will obtain results on the Hochschild 2-cocycles associated to 
quantum
Drinfeld Hecke algebras.

The Hochschild cohomology of an algebra $R$ is $\HH^*(R):= 
\Ext^*_{R^e}(R,R)$,
where the enveloping algebra $R^e :=R\ot R^{op}$ acts on $R$ by left and 
right multiplication. 
When $R=A\rtimes G$ is a skew group algebra in a characteristic not 
dividing the
order of the finite group $G$,
it is well-known that there is an  action of $G$ on $\HH^*(A,A\rtimes G):= 
\Ext^*_{A^e}(A,A\rtimes G)$ for which $\HH^*(A\rtimes G)\cong 
\HH^*(A,A\rtimes G)^G$,
the elements of $\HH^*(A, A\rtimes G)$ that are invariant under $G$.
(See, for example, \c{S}tefan \cite[Corollary 3.4]{S}.)

For the purpose of computing Hochschild cohomology, 
we first recall the quantum Koszul resolution.
Our goal is to understand the Hochschild
cohomology of $S_{\bf q}(V)$ and of $S_{\bf q}(V)\rtimes G$, and
to use this knowledge to give explicitly any corresponding deformations 
of $S_{\bf q}(V)\rtimes G$.

Set $A=S_{\bf q}(V)$.
For each $g\in G$, $A_g$ is a (left)
$A^e$-module via the action 
$$
   (a\ot b) \cdot (c g) := ac g b = ac (\lexp{g}{b})  g
$$
for all $a,b,c\in A$, $g\in G$.
According to  Wambst \cite[Proposition 4.1(c)]{W},
the following is a free $A^e$-resolution of $A$:

\begin{equation}
\label{label: free resolution}
\cdots \xrightarrow{} A^e\ot\Wedge_{\bf q}^2(V) \xrightarrow{d_2}
A^e \otimes \Wedge_{\bf q} ^1(V) \xrightarrow{d_1}
A^e \xrightarrow{\text{mult}} A \xrightarrow{} 0,
\end{equation}
that is, for $1\leq m\leq n$, the degree $m$ term is $A^e\otimes 
\Wedge_{\bf q}^m(V)$;
the differential $d_m$  is defined by 
\begin{equation*}
\begin{split}
&d_m(1^{\ot 2}\ot v_{j_1}\wedge\cdots\wedge v_{j_m})\\
&=  \sum_{i=1}^m (-1)^{i+1} \left[ \left(\prod_{s=1}^{i} q_{j_s, j_i} 
\right)
    v_{j_i}\ot 1 - \left(\prod_{s=i}^m q_{j_i, j_s} \right) \ot v_{j_i} 
\right] \ot
    v_{j_1}\wedge \cdots \wedge \hat{v}_{j_i} \wedge\cdots \wedge v_{j_m}
\end{split}
\end{equation*}
whenever $1\leq j_1 <\ldots < j_m\leq n$, and mult denotes the 
multiplication
map. 
(While $\Wedge^m_{\bf q}(V)$ is isomorphic to $\Wedge^m(V)$ as a vector 
space, we retain
the ${\bf q}$ in the notation as a reminder to apply the relation
$v_i\wedge v_j = - q_{ij}v_j\wedge v_i$ whenever we wish to rewrite 
elements in this way,
such as after having applied a group action.)
The complex (\ref{label: free resolution}) 
is a twisted version of the usual Koszul resolution for a polynomial ring.
See also Bergh and Oppermann \cite{BO} for construction of 
more general twisted products of resolutions.

Let us write the above formula for $d_m$ in a more convenient form. 
We first introduce some notation following  Wambst \cite{W}. 
Let $\N^n$ denote the set of all $n$-tuples of elements
from $\N$. For any $\alpha = (\alpha_1,\ldots,\alpha_n) \in \N^n$, the 
{\bf length}
of $\alpha$, denoted $|\alpha|$, is the sum $\sum_{i=1}^n \alpha_i$.
For all $\alpha \in \N^n$, define $v^\alpha := 
v_1^{\alpha_1} v_2^{\alpha_2} \cdots v_n^{\alpha_n}$. For all 
$i \in \{1, \ldots, n\}$,  define $[i] \in \N^n$ by
$[i]_j = \delta_{i,j}$, for all $j \in \{1, \ldots, n\}$.
For any $\beta=(\beta_1,\ldots,\beta_n) \in \O^n$, let $v^{\wedge \beta}$ 
denote
the vector $v_{j_1} \wedge \cdots \wedge v_{j_m} \in \Wedge_{\bf q}^m(V)$
which is defined by $m = |\beta|$, $\beta_{j_k} = 1$ for all 
$k \in \{1, \ldots, m\}$, and $j_1<\ldots <j_m$.
Then, for any $\beta \in \O^n$ with $|\beta| = m$ we have
\begin{equation*}
d_m(1^{\ot 2}\ot v^{\wedge \beta}) =
\sum_{i=1}^n \delta_{\beta_i,1} (-1)^{\sum_{s=1}^{i-1} \beta_s} 
\left[ \left(\prod_{s=1}^i q_{s, i}^{\beta_s} \right)
    v_i\ot 1 - \left(\prod_{s=i}^n q_{i, s}^{\beta_s} \right) \ot v_i 
\right] \ot
    v^{\wedge(\beta-[i])}.
\end{equation*}

Applying the functor $\Hom_{A^e}(\cdot, A_g)$ 
to the $A^e$-resolution of $A$ in \eqref{label: free resolution},
and making appropriate identifications, we obtain 
\begin{equation}
\label{new Hom(resolution) with G}
0 \xrightarrow{}  A_g \xrightarrow{d_1^*} 
A_g \ot \Wedge^1_{{\bf q}^{-1}}(V^*) \xrightarrow{d_2^*} 
A_g \ot \Wedge^2_{{\bf q}^{-1}}(V^*) \xrightarrow{} \cdots,
\end{equation}
where  $d_m^*(a g\ot (v^*)^{\wedge\beta})$ is equal to 
\begin{equation}
\label{formula for d_m^* with G}
\sum_{i=1}^n
\delta_{\beta_i,0} (-1)^{\sum_{s=1}^i \beta_s} \left[
\left( \left( \prod_{s=1}^i q_{s,i}^{\beta_s} \right) v_ia - 
\left( \prod_{s=i}^n q_{i,s}^{\beta_s}\right) a(\lexp{g}{v_i}) \right) g 
\right]
\ot {(v^*)}^{\wedge(\beta+[i])},
\end{equation}
for all $a \in A$ and $\beta \in \O^n$ with $|\beta|=m-1$.
(Note that the relations on dual functions are indeed $v_i^*\wedge v_j^* =
- q_{ij}^{-1}v_j^*\wedge v_i^*$, for all $i,j$, as may be determined by
applying each side of this equation to $v_i\wedge v_j$ and using the
defining relations in $\Wedge_{\bf q}(V)$.)

We may compute $\HH^m(A \rtimes G)$ as follows:
$$
   \HH^m(A,A_g) \cong \Ker d^*_{m+1}/\Im d^*_{m}
\qquad \text{and} \qquad
   \HH^m(A \rtimes G) \cong \left(\bigoplus_{g\in G} 
\HH^{m}(A,A_g)\right)^G.
$$

Hochschild 2-cocycles give rise to elements of
$\Hom_{\CC}((A \rtimes G)^{\ot 2}, A \rtimes G)$
that satisfy the $2$-cocycle condition (\ref{eqn:2-cocycle-condn}). 
We wish to describe this correspondence
explicitly. To this end, we will next introduce maps
that translate between the complex (\ref{new Hom(resolution) with G}) and 
the bar complex for $A\rtimes G$.

First we consider chain maps between the quantum Koszul resolution 
(\ref{label: free resolution})
and the bar resolution of $A$:
$$
\xymatrix{
\cdots \ar[r] & A^{\ot 4}\ar[r]^{\delta_2}\ar@<-2pt>[d]_{\Psi_2} 
               & A^{\ot 3}\ar[r]^{\delta_1}\ar@<-2pt>[d]_{\Psi_1} 
               & A^e \ar[r]^{\text{mult}} \ar[d]_{=}
               & A \ar[r] \ar[d]_{=} & 0\\
\cdots \ar[r] & A^e\ot \Wedge_{\bf q} ^2 V 
\ar[r]^{d_2}\ar@<-2pt>[u]_{\Phi_2}
               & A^e\ot \Wedge_{\bf q} ^1 V 
\ar[r]^{\hspace{.6cm}d_1}\ar@<-2pt>[u]_{\Phi_1}
               & A^e \ar[r]^{\text{mult}} \ar[u]
               & A\ar[r] \ar[u] & 0 .
}
$$
Here the differentials $\delta_i$ in the bar resolution are defined as 
$$
\delta_i(a_0\ot\cdots \ot a_{i+1}) = \sum_{j=0}^i (-1)^j a_0\ot\cdots \ot 
a_j a_{j+1}
  \ot\cdots\ot a_{i+1}
$$
for all $a_0,\ldots,a_{i+1}\in A$.
We will only need to know the values of $\Psi_2$ on elements of the form
$1\ot v_i\ot v_j\ot 1$, and to find these values, we will need to know the 
values
of $\Psi_1$ on $1\ot v_i\ot 1$ and on $1\ot v_iv_j\ot 1$.
Since the bar resolution consists of free modules, we may
choose these values to be any inverse images, under $d_1$, of 
$\delta_1(1\ot v_i\ot 1)$
and of $\delta_1(1\ot v_iv_j\ot 1)$.
We choose $\Psi_1(1\ot v_i\ot 1) = 1\ot 1\ot v_i$ and 
$\Psi_1(1\ot v_iv_j\ot 1) = q_{ij}\ot v_i\ot v_j + q_{ij}v_j\ot 1\ot v_i$.
It follows that 
\begin{eqnarray*}
  \Psi_1\delta_2(1\ot v_i\ot v_j\ot 1) &=& \Psi_1(v_i\ot v_j\ot 1 
   -1\ot v_iv_j\ot 1 + 1\ot v_i\ot v_j)\\
    &=& (v_i\ot 1 - q_{ij}\ot v_i)\ot v_j - (q_{ij}v_j\ot 1 - 1\ot v_j)\ot 
v_i.
\end{eqnarray*}
When $i<j$,
this is precisely $d_2(1\ot 1\ot v_i\wedge v_j)$, so we may let
\begin{equation}\label{psi-two}
  \Psi_2(1\ot v_i\ot v_j\ot 1) = 1\ot 1\ot v_i\wedge v_j \ \ \ (1\leq 
i<j\leq n).
\end{equation}
A similar analysis shows that we may let $\Psi_2(1\ot v_i\ot v_j\ot 1)=0$ 
whenever $i\geq j$.
(This asymmetric choice can make hand computations less onerous; 
alternatively, a more elegant,
quantum symmetric choice is possible.)


Chain maps $\Phi_i$ are defined in \cite{NSW}, and more generally in 
\cite{W}, 
that embed the quantum Koszul resolution
as a subcomplex of the bar resolution. We will not need these maps here.




Two more maps are defined as in \cite{SW2}:
We define the Reynold's operator, or averaging map, which ensures
$G$-invariance of the image, compensating for the possibility that 
$\Psi_2$ may not preserve the action of $G$:
\begin{eqnarray*}
\R_2: \Hom_{\CC}(A^{\ot 2}, A \rtimes G) &\to & \Hom_{\CC}(A^{\ot 2}, A 
\rtimes G)^G\\
\R_2(\gamma) &:=  &\frac{1}{|G|} \sum_{g \in G} \lexp{g}{\gamma}.
\end{eqnarray*}
A map that tells how to extend a function defined on
$A^{\ot 2}$ to a function defined on $(A\rtimes G)^{\ot 2}$ is
also from \cite{SW2}: 
\begin{eqnarray*}
\Theta_2^*:  \Hom_{\CC}(A^{\ot 2}, A \rtimes G)^G &\to & \Hom_{\CC}((A 
\rtimes G)^{\ot 2}, A \rtimes G)\\
\Theta_2^*(\kappa)(a_1 g_1 \ot a_2  g_2) & := & \kappa(a_1 \ot 
\lexp{g_1}{a_2})g_1g_2.
\end{eqnarray*}

We will identify $(A\rtimes G)\ot \Wedge^2_{{\bf q}^{-1}}(V^*)$ with 
$\Hom_{A^e}(A^e\ot \Wedge_{\bf q}^2(V), A\rtimes G)$
by sending $b\ot v_i^*\wedge v_j^*$ ($i<j$)
to the $A^e$-homomorphism taking $1\ot 1\ot v_k\wedge v_l$ ($k<l$)
to $b$ if $v_i\wedge v_j = v_k\wedge v_l$ and to 0 otherwise.
We will further identify $\Hom_{A^e}(A^e\ot U , W)$ with $\Hom_{\CC}(U,W)$,
for any vector space $U$ and $A^e$-module $W$, where convenient.

We will use the following, which is Theorem 4.3 of \cite{SW2}.
The hypothesis on the group action always holds in the case $q_{ij}=1$ for 
all $i,j$,
and as well in the case $q_{ij}=-1$ for all $i\neq j$.
For other choices of ${\bf q}$, the hypothesis is equivalent to conditions 
on the
entries of the matrices by which group elements act on $V$.

\begin{theorem}[\cite{SW2}]\label{thm: SW2}
Assume there is an action of $G$ on the quantum Koszul complex 
(\ref{label: free resolution}),
that is, an action of $G$ on each $A^e\ot \Wedge^i_{\bf q}(V)$ that 
commutes with
the differentials.
The composition $\Theta_2^* \R_2  \Psi_2^* $
induces an isomorphism
$$
\left(\bigoplus_{g\in G} \HH^{2}(A,A_g)\right)^G 
\xrightarrow{\sim} 
\HH^2(A \rtimes G).
$$
Moreover,  $\Theta_2^* \R_2  \Psi_2^* $ maps $\oplus_{g\in G} \HH^2(A, 
A_g)$
onto $\HH^2(A\rtimes G)$. 
\end{theorem}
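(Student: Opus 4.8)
The statement to prove is Theorem~\ref{thm: SW2}, namely that under the hypothesis that $G$ acts on the quantum Koszul complex \eqref{label: free resolution} compatibly with the differentials, the composite $\Theta_2^* \R_2 \Psi_2^*$ induces an isomorphism $\left(\bigoplus_{g\in G}\HH^2(A,A_g)\right)^G \xrightarrow{\sim} \HH^2(A\rtimes G)$, and in fact $\Theta_2^*\R_2\Psi_2^*$ maps $\bigoplus_{g\in G}\HH^2(A,A_g)$ onto $\HH^2(A\rtimes G)$. Since this is cited verbatim as Theorem 4.3 of \cite{SW2}, the plan is to reconstruct the argument in the present quantum setting, where the only role of the hypothesis is to guarantee that everything in sight is $G$-equivariant. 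The backbone is the classical isomorphism $\HH^m(A\rtimes G)\cong\bigl(\bigoplus_{g\in G}\HH^m(A,A_g)\bigr)^G$ recalled earlier (from \v Stefan \cite{S}); our task is to realize that abstract isomorphism concretely at the level of $2$-cocycles via the three explicit maps $\Psi_2^*$, $\R_2$, and $\Theta_2^*$.

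First I would note that $\Psi_2\colon A^{\ot 4}\to A^e\ot\Wedge_{\bf q}^2(V)$ is a chain map lifting the identity, so $\Psi_2^*$ is a quasi-isomorphism on $\Hom_{A^e}(-,A\rtimes G)$; in particular it induces an isomorphism $\HH^2(A,A\rtimes G)\cong\bigoplus_{g\in G}\HH^2(A,A_g)$ on the Koszul side, computed via the complex \eqref{new Hom(resolution) with G} summed over $g$. Next, under the hypothesis that $G$ acts on the Koszul complex commuting with differentials, the Reynolds operator $\R_2=\frac{1}{|G|}\sum_g{}^g(-)$ is a chain-level projection onto the $G$-invariant subcomplex, hence on cohomology it is the identity on the already-$G$-invariant classes and it recovers the invariants functor (here $|G|$ is invertible since we work over $\CC$). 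This is the step where the hypothesis is essential: without a $G$-action on the Koszul complex, $\Psi_2^*$ need not be $G$-equivariant and averaging its image is the device that corrects this. Finally, $\Theta_2^*$ is the standard isomorphism $\Hom_\CC(A^{\ot 2},A\rtimes G)^G\xrightarrow{\sim}\Hom_{(A\rtimes G)^e}(\text{bar}_2(A\rtimes G),A\rtimes G)$ coming from the decomposition $A\rtimes G=\bigoplus_g A_g$ and the fact that a $G$-invariant Hochschild cochain on $A$ extends uniquely to one on $A\rtimes G$ by the formula $\Theta_2^*(\kappa)(a_1g_1\ot a_2g_2)=\kappa(a_1\ot{}^{g_1}a_2)g_1g_2$; one checks directly that this commutes with the bar differentials and hence descends to an isomorphism $\HH^2(A,A\rtimes G)^G\xrightarrow{\sim}\HH^2(A\rtimes G)$.

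Putting these together: $\Psi_2^*$ transports $\HH^2(A\rtimes G)\cong\HH^2(A,A\rtimes G)^G$ to a subquotient of $\bigoplus_g A_g\ot\Wedge^2_{{\bf q}^{-1}}(V^*)$, $\R_2$ identifies the relevant invariants, and $\Theta_2^*$ is the comparison back to the bar complex for $A\rtimes G$; composing, $\Theta_2^*\R_2\Psi_2^*$ is an isomorphism on $\bigl(\bigoplus_g\HH^2(A,A_g)\bigr)^G$. For the ``onto'' clause, one observes that $\R_2$ extends to a well-defined surjection $\bigoplus_g\HH^2(A,A_g)\twoheadrightarrow\bigl(\bigoplus_g\HH^2(A,A_g)\bigr)^G$ — again because $G$ acts on the Koszul complex, so averaging a (not necessarily invariant) cocycle yields an invariant cocycle in the same total cohomology — whence $\Theta_2^*\R_2\Psi_2^*$ already hits all of $\HH^2(A\rtimes G)$ when applied to the full sum, not just to its invariant part. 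The main obstacle, and the part deserving the most care, is verifying chain-level compatibility: that $\Psi_2^*$ really does land in cocycles (equivalently, that $\Psi_2$ composed with $\delta$ agrees with $d$ composed with $\Psi$, which the explicit formula \eqref{psi-two} and the computation of $\Psi_1\delta_2$ in the text are designed to ensure), and that $\Theta_2^*$ intertwines the Koszul-side differential $d_3^*$ of \eqref{formula for d_m^* with G} with the bar differential on $(A\rtimes G)^{\ot 3}$ — a direct but somewhat intricate diagram chase. Everything else is bookkeeping with the decomposition $A\rtimes G=\bigoplus_g A_g$ and the invertibility of $|G|$ in $\CC$.
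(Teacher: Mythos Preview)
The paper does not actually prove this theorem: it is stated as Theorem~4.3 of \cite{SW2} and simply quoted for use, with no proof given in the present paper. So there is no ``paper's own proof'' to compare your proposal against; the authors defer entirely to \cite{SW2}.

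That said, your reconstruction is a reasonable outline of the argument one expects to find in \cite{SW2}: the three ingredients are exactly the quasi-isomorphism $\Psi^*$ between the Koszul and bar models for $\HH^*(A,A\rtimes G)$, the Reynolds projector $\R_2$ onto $G$-invariants (well-defined on cohomology precisely because of the hypothesis that $G$ acts on the Koszul complex), and the induction map $\Theta_2^*$ identifying $G$-invariant bar cochains on $A$ with bar cochains on $A\rtimes G$. One point to tighten if you were to write this out in full: $\Theta_2^*$ as defined in the paper is a map between \emph{bar} cochains, not Koszul cochains, so the diagram chase you allude to (intertwining $d_3^*$ with the bar differential on $(A\rtimes G)^{\ot 3}$) is not quite the right check; rather, one verifies that $\Theta_2^*$ is a cochain map between the bar complexes for $A$ and for $A\rtimes G$, and the passage through the Koszul complex happens entirely via $\Psi_2^*$ before $\Theta_2^*$ is applied.
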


For later use, we record a much simpler consequence.
Let $\alpha \in( A \rtimes G) \ot \Wedge_{{\bf q}^{-1}}^2(V^*)$. 
Then
\begin{equation}
\label{composition}
[\Theta_2^* \R_2   \Psi_2^* (\alpha)]
(v_i  \ot v_j ) = \frac{1}{|G|} \sum_{g \in G} \,  \lexp{g}\!
  {(\alpha (\Psi_2(1 \ot \lexp{g^{-1}}{v_i} \ot \lexp{g^{-1}}{v_j} \ot 
1)))}.
\end{equation}

Note that those elements of $\Hom_{A^e}(A^e\ot \Wedge_{\bf q}^2(V), 
A\rtimes G)^G
\cong ((A\rtimes G)\ot \Wedge_{{\bf q}^{-1}}^2(V^*))^G$ that correspond to 
{\em constant} Hochschild two-cocycles,
that is, those of degree $-2$ as maps from $(A\rtimes G)\ot (A\rtimes G)$ 
to $A\rtimes G$, 
are precisely
those in $(\CC G\ot\Wedge_{{\bf q}^{-1}}^2(V^*))^G$, due to the form of 
the chain map $\Psi_2$.
Thus we wish first to find those elements of 
$\CC G\ot \Wedge_{{\bf q}^{-1}}^2(V^*)$ that are in the kernel of $d_3^*$,
and then to restrict to $G$-invariants. 
Note that the intersection of the 
image of $d_2^*$ with $\CC G\ot\Wedge_{{\bf q}^{-1}}^2(V^*)$ is 0.
Applying our earlier formula, letting $\beta = [j] + [k]$, 
$$
\begin{aligned}
d_3^*( g\ot v_j^*\wedge v_k^*) \\
 & = \sum_{i\not\in\{j,k\}} (-1)^{\sum_{s=1}^i \beta_s} 
   \left[ \left(\left( \prod_{s=1}^i q_{s,i}^{\beta_s} \right) 
    v_i - \left(\prod _{s=i}^n q_{i,s}^{\beta_s}\right)
    \lexp{g}{v_i} \right)  g \right] \ot (v^*)^{\wedge(\beta + [i])} .
\end{aligned}
$$

\end{section}
\begin{section}{Quantum Drinfeld Hecke algebras and constant Hochschild 
2-cocycles}\label{LS-Theorem} 

Levandovskyy and Shepler \cite{LS} gave necessary
and sufficient conditions on the functions $\kappa_g$ 
for $\mH_{\mathbf{q}, \kappa}$ to be a quantum Drinfeld
Hecke algebra, and we restate their result as Theorem~\ref{thm: LS}  below.
Note that our formulation of their result is a little different as we
assume from the outset that $G$ acts by automorphisms on $S_{\bf q}(V)$,
our indices on ${\bf q}$ are reversed, and our functions $\kappa_g$ are
also reversed in the defining relations.
Theorem~\ref{thm: LS} will allow us to give explicitly the quantum
Drinfeld Hecke algebras as deformations of $S_{\bf q}(V)\rtimes G$
corresponding to Hochschild 2-cocycles found via the quantum 
Koszul resolution.
We will apply this theorem to particular types of group actions
in the next few sections.

For each group element $g\in G$, let $g^j_i$ be the
scalars for which 
$$
   \lexp{g}{v_j} = \sum_{i=1}^n g_i^j v_i.
$$
Define the {\bf quantum $(i,j,k,l)$-minor determinant} of $g$ as 
$$
    \ddet_{ijkl} (g) := g^j_lg^i_k - q_{ji} g^i_lg^j_k.
$$
It may be checked directly that for each $i,j$,
if $q_{ij}\neq 1$, then $g^i_kg^j_k=0$ for all $k$, since $G$ acts as
automorphisms on $S_{\bf q}(V)$.
(Apply $g$ to both sides of the equation $v_iv_j=q_{ij}v_jv_i$ and equate 
coefficients
of basis elements.) The following is a restatement of 
\cite[Theorem~7.6]{LS}. 

\begin{theorem}[\cite{LS}]\label{thm: LS} 
The algebra $\mH_{\mathbf{q}, \kappa}$ (defined in the introduction) is a 
quantum Drinfeld Hecke algebra
if and only if
\begin{itemize}
\item[(i)]  for all $g\in G$ and $1\leq i<j<k\leq n$, 
$$\hspace{.5cm} (q_{ki}q_{kj} \lexp{g}{v_k} - v_k)\kappa_g(v_j,v_i) + 
(q_{kj}v_j - q_{ji} \lexp{g}{v_j})
  \kappa_g(v_k,v_i) + (\lexp{g}{v_i} - q_{ji}q_{ki}v_i) \kappa_g(v_k,v_j) 
=0,$$
\item[(ii)] for all $i<j$ and all $g,h\in G$, 
   $ \  \kappa_{h^{-1}gh} (v_j,v_i) = \displaystyle{\sum_{k<l} 
\ddet_{ijkl}(h) \kappa_g(v_l,v_k)}$. 
\end{itemize}
\end{theorem}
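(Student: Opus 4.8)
\noindent\emph{Proposed approach.} The plan is to prove Theorem~\ref{thm: LS} by the Diamond Lemma of Bergman. Present $\mH_{\mathbf{q},\kappa}$ on the generating set $\{v_1,\dots,v_n\}\cup\{g\mid g\in G\}$, with relations $g\cdot h=(gh)$ for $g,h\in G$, straightening relations $g\cdot v_i=\sum_{k=1}^n g^i_k\,v_k\cdot g$, and ordering relations
\[
  v_j\cdot v_i=q_{ji}\,v_iv_j+\sum_{g\in G}\kappa_g(v_j,v_i)\,g\qquad(1\le i<j\le n),
\]
each read as a left-to-right reduction rule. The irreducible monomials are then the $v_1^{a_1}\cdots v_n^{a_n}g$ with $a_k\in\N$, $g\in G$. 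Ordering words first by total degree in the $v_k$, then by the number of group letters, then by the sum over group letters of the number of $v$-letters to the right of that letter, then by the number of inversions among the $v$-letters, gives a well-founded semigroup order that each reduction strictly decreases; hence the reductions terminate, the irreducible monomials span $\mH_{\mathbf{q},\kappa}$, and the Diamond Lemma applies. Since the degree-two part of each generator $v_iv_j-q_{ij}v_jv_i-\kappa(v_i,v_j)$ is a defining relation of $S_{\mathbf{q}}(V)\rtimes G$, there is a surjection $S_{\mathbf{q}}(V)\rtimes G\twoheadrightarrow\operatorname{gr}\mH_{\mathbf{q},\kappa}$, so $\mH_{\mathbf{q},\kappa}$ is a quantum Drinfeld Hecke algebra precisely when the irreducible monomials are linearly independent, which by the Diamond Lemma happens precisely when every overlap ambiguity is resolvable.

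The next step is to enumerate the overlap ambiguities. As every left-hand side has length two, these are the words $g\,h\,k$, $g\,h\,v_i$, $h\,v_j\,v_i$ ($i<j$), and $v_k\,v_j\,v_i$ ($i<j<k$), with $g,h,k\in G$. The first resolves by associativity in $G$, and $g\,h\,v_i$ resolves exactly because $\lexp{g}{(\lexp{h}{v_i})}=\lexp{gh}{v_i}$ (the matrix of $gh$ is the product of those of $g$ and $h$); neither constrains $\kappa$. For the remaining two, observe that along any reduction path the total degree in the $v_k$ is either preserved or drops by two, so only two graded components of each resolution need be compared. For $h\,v_j\,v_i$, the degree-two comparison asserts that $\lexp{h}{(v_jv_i-q_{ji}v_iv_j)}$ lies in the span of the defining relations, which holds since $G$ acts by automorphisms on $S_{\mathbf{q}}(V)$; the degree-zero comparison — obtained by reordering the double sums $\sum_{l,k}h^j_l h^i_k\,v_lv_k$ produced on the two sides, which is where the quantum minor determinants $\ddet_{ijkl}(h)$ arise, then relabelling the resulting group elements and matching coefficients of each element of $G$ — is precisely condition (ii). For $v_k\,v_j\,v_i$, the degree-three comparison is PBW for $S_{\mathbf{q}}(V)$ itself, valid by Wambst's resolution \eqref{label: free resolution}, and the degree-one comparison, after collecting coefficients of each group element, is precisely condition (i).

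Putting these together, the irreducible monomials form a basis — equivalently $\operatorname{gr}\mH_{\mathbf{q},\kappa}\cong S_{\mathbf{q}}(V)\rtimes G$ — if and only if all four families of ambiguities resolve, i.e.\ if and only if (i) and (ii) hold. I expect the main obstacle to lie in the overlap computations of the second step: one must propagate products of the scalars $q_{ij}$ correctly through three distinct reduction paths (hardest for the triple $v$-overlap, which produces the three-term identity (i)), and for (ii) one must carefully extract the degree-zero part of $\sum_{l,k}h^j_l h^i_k\,v_lv_k$ contributed by the pairs with $l>k$ and then rewrite the group elements that appear into the conjugated form $h^{-1}gh$ of the statement, while checking that the two unused graded components contribute nothing new so that exactly two conditions remain. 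An alternative, homological route would instead identify the deformation obstruction with a class in $\HH^3(S_{\mathbf{q}}(V)\rtimes G)$ via Theorem~\ref{main-thm} and the Koszulity given by \eqref{label: free resolution}, in the spirit of Braverman--Gaitsgory; this recovers the same two conditions.
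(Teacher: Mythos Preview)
The paper does not actually prove Theorem~\ref{thm: LS}: it is stated there as a restatement of \cite[Theorem~7.6]{LS} and is used as a black box in the proof of Theorem~\ref{thm: constant}. So there is no in-paper proof to compare against.

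That said, your Diamond Lemma approach is the natural one and is essentially what Levandovskyy and Shepler carry out in \cite{LS} (they phrase it via Gr\"obner bases and reduction systems, which is the same engine). Your enumeration of overlaps is correct, and the identification of the two nontrivial families---$h\,v_j\,v_i$ producing the $G$-equivariance condition (ii) and $v_k\,v_j\,v_i$ producing the quantum Jacobi condition (i)---is exactly right. One point to be careful about: in the $h\,v_j\,v_i$ overlap, after moving $h$ past both $v$'s you get a double sum $\sum_{k,l} h^j_l h^i_k\,v_l v_k\,h$, and to extract the degree-zero piece you must apply the reduction $v_lv_k\to q_{lk}v_kv_l+\kappa(v_l,v_k)$ only for $l>k$; the diagonal terms $l=k$ contribute nothing to degree zero, but you should verify that the degree-two remainder on the two sides matches using the fact that $G$ acts by automorphisms on $S_{\mathbf q}(V)$ (this is where the identities among the $h^i_k$ coming from $\lexp{h}{(v_iv_j)}=q_{ij}\,\lexp{h}{(v_jv_i)}$ enter). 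The conjugation $g\mapsto h^{-1}gh$ appears when you compare $h\cdot\kappa(v_j,v_i)=\sum_g\kappa_g(v_j,v_i)(hg)$ against the terms $\kappa_g(v_l,v_k)(gh)$ produced on the other side. Your proposed ordering is slightly delicate (the ``number of group letters'' can increase when you apply the $v_jv_i$-relation, since $\kappa$ takes values in $\CC G$), but since that relation strictly drops $v$-degree, lexicographic refinement with $v$-degree dominant suffices; you may want to state the order more carefully.
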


We use Theorem \ref{thm: LS} to show in the next theorem that all constant 
Hochschild 2-cocycles
give rise to quantum Drinfeld Hecke algebras under an additional 
assumption.
First we need two lemmas, whose proofs are straightforward.

\begin{lemma}
The action of $G$ on $V$ extends to an action on $\Wedge_{\bf q}(V)$ by 
automorphisms
if, and only if, for all $g\in G$, $i\neq j$ and $k<l$,
$$
   (1-q_{ij}q_{lk} ) g^i_kg^j_l + (q_{ij} - q_{lk}) g^i_lg^j_k=0.
$$
\end{lemma}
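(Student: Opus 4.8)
The plan is to verify directly that the stated condition on the scalars $g^i_k$ is equivalent to the multiplicativity of the $G$-action on the defining relations of $\Wedge_{\bf q}(V)$. Since $G$ already acts linearly on $V$, the only question is whether the induced action on the tensor algebra $T(V)$ descends to the quotient $\Wedge_{\bf q}(V)$; this happens exactly when $g$ sends the relators $v_iv_j + q_{ij}v_jv_i$ (equivalently, the elements $v_i\wedge v_j + q_{ij}v_j\wedge v_i$ for $i\neq j$, together with $v_i^2$, which is automatic in characteristic $0$) into the ideal generated by all relators. So first I would write out $\lexp{g}{(v_i\wedge v_j + q_{ij} v_j\wedge v_i)}$ using $\lexp{g}{v_i} = \sum_k g^i_k v_k$ and expand, obtaining a sum $\sum_{k,l} (g^i_k g^j_l + q_{ij} g^j_k g^i_l)\, v_k\wedge v_l$.

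Next I would reduce this expression to the ordered basis $\{v_k\wedge v_l : k<l\}$ of $\Wedge^2_{\bf q}(V)$ using the relation $v_l\wedge v_k = -q_{kl}^{-1}v_k\wedge v_l$ and $v_k\wedge v_k = 0$. Collecting the coefficient of $v_k\wedge v_l$ for a fixed pair $k<l$, the terms come from $(k,l)$ and from $(l,k)$: the $(l,k)$ term $(g^i_l g^j_k + q_{ij} g^j_l g^i_k) v_l\wedge v_k$ contributes $-q_{kl}^{-1}(g^i_l g^j_k + q_{ij} g^j_l g^i_k)$. Hence the coefficient of $v_k\wedge v_l$ in $\lexp{g}{(v_i\wedge v_j+q_{ij}v_j\wedge v_i)}$ is
$$
 g^i_k g^j_l + q_{ij} g^j_k g^i_l - q_{lk}\big(g^i_l g^j_k + q_{ij} g^j_l g^i_k\big),
$$
writing $q_{kl}^{-1}=q_{lk}$. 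The action extends to an algebra automorphism precisely when every such coefficient (for all $i\neq j$ and all $k<l$) vanishes, since then $g$ preserves the defining ideal (and $g$ is invertible, so its inverse does too, giving a genuine automorphism). Regrouping the displayed quantity by collecting the $g^i_k g^j_l$ and $g^i_l g^j_k$ monomials yields exactly $(1 - q_{ij}q_{lk})\, g^i_k g^j_l + (q_{ij} - q_{lk})\, g^i_l g^j_k$, which is the claimed condition.

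The one point that needs a little care — and which I expect to be the only real obstacle — is the bookkeeping in the reduction to the ordered basis: one must be sure to account for the $(l,k)$-contributions with the correct sign and $q$-factor, and to note that the $k=l$ diagonal terms drop out because $v_k\wedge v_k=0$, so the condition is only imposed for $k\neq l$ (equivalently $k<l$ by antisymmetry). I would also remark that the condition is symmetric under swapping $(i,j)$ with $(j,i)$ up to the factor $q_{ij}$, consistent with the fact that the relator for the pair $(j,i)$ is $q_{ij}$ times that for $(i,j)$, so imposing it for $i\neq j$ unordered is the same as for $i<j$. With that observed, the equivalence is immediate from equating coefficients in the ordered basis.
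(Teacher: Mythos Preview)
Your proposal is correct and follows essentially the same approach as the paper's own proof: both expand $\lexp{g}{(v_i\wedge v_j)}$ and $\lexp{g}{(-q_{ij}v_j\wedge v_i)}$ in the ordered basis $\{v_k\wedge v_l:k<l\}$ of $\Wedge^2_{\bf q}(V)$ and equate coefficients to obtain the stated condition. Your write-up is in fact a bit more careful than the paper's in spelling out the descent-to-the-quotient framework and the handling of diagonal terms, but the underlying computation is identical.
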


Note that if all $q_{ij}=1$, or if $q_{ij}=-1$ for all $i\neq j$, the 
condition
in the lemma clearly holds. 
In general it imposes strong conditions on the matrix entries $g^i_j$:

\begin{lemma}\label{two-actions}
Assume that the action of $G$ on $V$ extends to an  action on $\Wedge_{\bf 
q}(V)$ 
by algebra automorphisms.
Then for all $g\in G$ and $i,j,k,l$ ($i<j$, $k<l$), if $g^i_lg^j_k\neq 0$ 
then
$q_{lk}=q_{ij}$, and if $g^i_kg^j_l \neq 0$, then $q_{lk}=q_{ij}^{-1}$.
\end{lemma}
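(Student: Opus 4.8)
The plan is to derive the claim directly from the identity in the preceding lemma, namely
\[
   (1-q_{ij}q_{lk})\,g^i_kg^j_l + (q_{ij}-q_{lk})\,g^i_lg^j_k = 0
\]
which holds for all $g\in G$, $i\neq j$, and $k<l$ under the standing hypothesis that the action extends to $\Wedge_{\bf q}(V)$. The two assertions to be proved are contrapositives: if $g^i_lg^j_k\neq 0$ then $q_{lk}=q_{ij}$, and if $g^i_kg^j_l\neq 0$ then $q_{lk}=q_{ij}^{-1}$ (equivalently $q_{ij}q_{lk}=1$). Each is an immediate consequence of the lemma's relation once I know that the \emph{other} product in that relation vanishes; so the real content is to show that $g^i_kg^j_l$ and $g^i_lg^j_k$ cannot both be nonzero unless the corresponding $q$'s are trivial, and in fact I will show that nonvanishing of one of them forces the relevant scalar coefficient in front of it to vanish.

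First I would treat the case $g^i_lg^j_k\neq 0$. By the identity above, if in addition $g^i_kg^j_l\neq 0$, I would need the two coefficients $(1-q_{ij}q_{lk})$ and $(q_{ij}-q_{lk})$ to be linked; but rather than chasing both, the cleaner route is to invoke the observation recorded in the paper just before Theorem~\ref{thm: LS}: for each pair of indices $a,b$, if $q_{ab}\neq 1$ then $g^a_k g^b_k = 0$ for every $k$, since $G$ acts by automorphisms on $S_{\bf q}(V)$. Using this with the assumption $g^i_l g^j_k\neq 0$ (so $g^i_l\neq 0$ and $g^j_k\neq 0$), I would argue as follows. Apply $g$ to $v_iv_j = q_{ij}v_jv_i$ and extract the coefficient of $v_lv_k$ (with $k<l$): the left side contributes $g^i_l g^j_k + q_{lk} g^i_k g^j_l$ and the right side contributes $q_{ij}(g^j_l g^i_k + q_{lk} g^j_k g^i_l)$, giving
\[
   g^i_l g^j_k + q_{lk}\,g^i_k g^j_l = q_{ij}\bigl(g^i_l g^j_k\,q_{lk} + g^i_k g^j_l\bigr),
\]
where I have used $g^j_l g^i_k = g^i_k g^j_l$ as scalars. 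Combined with the $\Wedge_{\bf q}(V)$-relation, a short linear-algebra step in the two unknowns $g^i_l g^j_k$ and $g^i_k g^j_l$ pins down $q_{lk}$: since $g^i_l g^j_k\neq 0$, the determinant of the resulting $2\times 2$ system must vanish, and that determinant condition simplifies to $q_{lk}=q_{ij}$. The symmetric case $g^i_k g^j_l\neq 0$ is handled identically, with the roles of the two products interchanged, yielding $q_{ij}q_{lk}=1$, i.e. $q_{lk}=q_{ij}^{-1}$.

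I expect the main obstacle to be bookkeeping rather than conceptual: getting the coefficient of $v_lv_k$ (as opposed to $v_kv_l$) correct after applying $g$ and then rewriting monomials into the normal-form basis $v_av_b$ with $a\le b$ using $v_bv_a=q_{ab}v_av_b$, and making sure the $q$-exponents match those appearing in the lemma. It is also worth a sentence to note that the degenerate subcases — where $i,j$ coincide with $k,l$ as sets, so that some of the monomials collapse — either fall outside the hypothesis ($i\neq j$, $k<l$) or are trivially consistent (then $q_{lk}=q_{ij}$ or $q_{ij}^{-1}$ holds tautologically). Once the coefficient extraction is set up carefully, each of the two implications is a one-line consequence, so I would present the $v_lv_k$-coefficient computation once, record the two equations, and then read off both conclusions.
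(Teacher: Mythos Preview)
Your approach is essentially the paper's: derive one relation from the $S_{\bf q}(V)$-action and combine it with the relation from the preceding lemma coming from the $\Wedge_{\bf q}(V)$-action. However, your execution obscures how the combination actually yields the two separate conclusions, and the determinant argument as you state it does not work. With the correctly extracted coefficient of $v_kv_l$ in $\lexp{g}{(v_iv_j)}=q_{ij}\,\lexp{g}{(v_jv_i)}$ one obtains
\[
g^i_kg^j_l(1-q_{ij}q_{lk}) + g^i_lg^j_k(q_{lk}-q_{ij}) = 0,
\]
which together with the $\Wedge_{\bf q}(V)$-relation
\[
g^i_kg^j_l(1-q_{ij}q_{lk}) + g^i_lg^j_k(q_{ij}-q_{lk}) = 0
\]
has coefficient determinant $2(1-q_{ij}q_{lk})(q_{lk}-q_{ij})$. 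Vanishing of this determinant gives only the \emph{disjunction} $q_{lk}=q_{ij}$ or $q_{lk}=q_{ij}^{-1}$; it does not tell you which alternative holds depending on which of $g^i_lg^j_k$, $g^i_kg^j_l$ is nonzero. (Your version of the $S_{\bf q}$-relation also has the two products interchanged, which would change the determinant entirely.)

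The paper avoids this by simply adding and subtracting the two displayed relations, which immediately yields $g^i_kg^j_l(1-q_{ij}q_{lk})=0$ and $g^i_lg^j_k(q_{ij}-q_{lk})=0$ separately, and the lemma follows. The detour through the observation $g^a_kg^b_k=0$ when $q_{ab}\neq 1$, and the discussion of degenerate index coincidences, are unnecessary once you do this.
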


\begin{theorem} \label{thm: constant}
Assume that the action of $G$ on $V$ extends to an action on $\Wedge_{\bf 
q}(V)$
by algebra automorphisms. Then  
each constant Hochschild 2-cocycle on $S_{\bf q}(V)\rtimes G$
gives rise to a quantum Drinfeld Hecke algebra.
\end{theorem}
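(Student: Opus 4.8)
The plan is to verify the two conditions of Theorem~\ref{thm: LS} for the functions $\kappa_g$ arising from an arbitrary constant Hochschild 2-cocycle. First I would recall, from the discussion at the end of Section~\ref{two resolutions}, that a constant Hochschild 2-cocycle corresponds to an element $\alpha = \sum_{g\in G} g\ot \lambda_g$ of $(\CC G\ot \Wedge^2_{{\bf q}^{-1}}(V^*))^G$ lying in $\Ker d_3^*$, where each $\lambda_g$ is a quantum-skew-symmetric bilinear form on $V$. Via the isomorphism of Theorem~\ref{thm: SW2} and formula~\eqref{composition}, the associated cocycle $\mu_1$ and hence the functions $\kappa_g$ are built by averaging $\alpha$ over $G$; the relation $\sum_g \kappa_g(v_j,v_i)g = \mu_1(v_j\ot v_i) - q_{ji}\mu_1(v_i\ot v_j)$ recorded after Theorem~\ref{main-thm} then expresses $\kappa_g$ in terms of the $\lambda_g$. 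So the $\kappa_g$ are essentially (a $G$-symmetrization of) the $\lambda_g$, and in particular each $\kappa_g$ is, up to scalar, the restriction of a linear functional on $\Wedge^2_{\bf q}(V)$, i.e.\ constant.

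Next I would check condition (ii), the conjugation-equivariance $\kappa_{h^{-1}gh}(v_j,v_i) = \sum_{k<l}\ddet_{ijkl}(h)\,\kappa_g(v_l,v_k)$. This should be a direct consequence of $G$-invariance of $\alpha$: the statement that $\alpha$ lies in $(\CC G\ot\Wedge^2_{{\bf q}^{-1}}(V^*))^G$ says exactly that conjugating the group-component by $h$ matches the action of $h$ on the $\Wedge^2_{{\bf q}^{-1}}(V^*)$-component, and the matrix of the latter action on the basis $\{v_k^*\wedge v_l^*\}_{k<l}$ is precisely given by the quantum minor determinants $\ddet_{ijkl}(h)$ (one verifies this by pairing $\lexp{h}{(v_k^*\wedge v_l^*)}$ against $v_i\wedge v_j$ and using the defining relations of $\Wedge_{\bf q}$). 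A small point to handle carefully is that the averaging in~\eqref{composition} does not disturb this: since $\alpha$ is already $G$-invariant, the averaged cocycle represents the same class and the $\kappa_g$ extracted from it satisfy (ii) on the nose. The role of the two lemmas here is to guarantee that the $G$-action on $\Wedge_{\bf q}(V)$ exists, so that these minor-determinant computations are legitimate and compatible with the $G$-action on $S_{\bf q}(V)$.

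The main obstacle, and the place I would spend the most effort, is condition (i): for all $g$ and $i<j<k$,
$$
(q_{ki}q_{kj}\lexp{g}{v_k} - v_k)\kappa_g(v_j,v_i) + (q_{kj}v_j - q_{ji}\lexp{g}{v_j})\kappa_g(v_k,v_i) + (\lexp{g}{v_i} - q_{ji}q_{ki}v_i)\kappa_g(v_k,v_j) = 0.
$$
I expect this to follow from the condition $\alpha\in\Ker d_3^*$ together with Lemma~\ref{two-actions}. The cocycle condition $d_3^*(\alpha)=0$, written out using the explicit formula for $d_3^*$ on $g\ot v_j^*\wedge v_k^*$ displayed at the end of Section~\ref{two resolutions}, gives exactly a relation among the coefficients $\kappa_g(v_l,v_k)$ involving sums of the form $(\prod q_{s,i}^{\beta_s} v_i - \prod q_{i,s}^{\beta_s}\lexp{g}{v_i})\kappa_g(\ldots)$; the task is to match the $\Wedge^3$-components of $d_3^*(\alpha)$ index-triple by index-triple with the left-hand side of (i). The subtlety is that in (i) the terms $\kappa_g(v_j,v_i)$ etc.\ are multiplied by $\lexp{g}{v_k}$, which is a \emph{sum} $\sum_m g^k_m v_m$, so one must re-sort everything into the monomial basis; Lemma~\ref{two-actions} is what forces the products $q$-powers appearing in $d_3^*$ to collapse to the specific powers $q_{ki}q_{kj}$, $q_{kj}$, $q_{ji}$, $q_{ji}q_{ki}$ seen in (i), because whenever a cross term $g^i_lg^j_k$ or $g^i_kg^j_l$ is nonzero the corresponding $q_{lk}$ equals $q_{ij}$ or $q_{ij}^{-1}$. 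So the proof of (i) reduces to: expand $d_3^*(\alpha)=0$, substitute $\lexp{g}{v_i}=\sum g^i_m v_m$, use the $q$-collapse from Lemma~\ref{two-actions} to simplify the $q$-power products, and recognize the result as (i). Once (i) and (ii) are in place, Theorem~\ref{thm: LS} immediately yields that $\mH_{\mathbf{q},\kappa}$ is a quantum Drinfeld Hecke algebra, completing the proof.
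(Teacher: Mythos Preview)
Your overall strategy---verify conditions (i) and (ii) of Theorem~\ref{thm: LS} for the $\kappa_g$ coming from a constant cocycle $\alpha$---is exactly what the paper does. However, you have two points of confusion that would complicate the write-up.

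First, the routing through $\Theta_2^*\R_2\Psi_2^*$ and the formula after Theorem~\ref{main-thm} is unnecessary. The paper simply \emph{defines} $\kappa_g(v_i,v_j):=\alpha^g_{ij}$ directly from the coefficients of $\alpha=\sum_{g,r<s}\alpha^g_{rs}\,g\ot v_r^*\wedge v_s^*$ on the Koszul side, and then checks (i) and (ii) for this $\kappa$. There is no need to transport $\alpha$ to the bar complex, average, and extract $\kappa_g$ from $\mu_1$; that detour only introduces the issue you flag (``does averaging disturb this?'') without any payoff.

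Second, and more importantly, you have the role of Lemma~\ref{two-actions} backwards. Condition~(i) follows from $d_3^*(\alpha)=0$ \emph{directly}, with no need to expand $\lexp{g}{v_k}$ or to collapse any $q$-powers: evaluating $d_3^*(\alpha)$ on $v_i\wedge v_j\wedge v_k$ already produces the equation
\[
\alpha^g_{jk}(v_i-q_{ij}q_{ik}\,\lexp{g}{v_i}) - \alpha^g_{ik}(q_{ij}v_j-q_{jk}\,\lexp{g}{v_j}) + \alpha^g_{ij}(q_{ik}q_{jk}v_k-\lexp{g}{v_k})=0,
\]
which is condition~(i) after the substitution $\kappa_g(v_j,v_i)=-q_{ji}\kappa_g(v_i,v_j)$ and multiplication by $q_{ij}q_{ik}q_{jk}$. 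The $q$-factors here come from the Koszul differential itself, not from any interaction with the $G$-action. Where Lemma~\ref{two-actions} \emph{is} genuinely needed is in condition~(ii): writing out $G$-invariance of $\alpha$ yields $\alpha^{h^{-1}gh}_{ij}=\sum_{k<l}(h^i_kh^j_l-q_{lk}h^i_lh^j_k)\alpha^g_{kl}$, and one must replace the $q_{lk}$ here by $q_{ij}$ (and similarly on the other side) to match the quantum minors $\det_{ijkl}(h)=h^j_lh^i_k-q_{ji}h^i_lh^j_k$; this replacement is exactly what Lemma~\ref{two-actions} licenses.
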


\begin{proof}
Let $\alpha$ be any constant Hochschild 2-cocycle on $S_{\bf q}(V)\rtimes 
G$,
so that $\alpha$ may be expressed in terms of the quantum Koszul complex as
$$
  \alpha = \sum_{g\in G}\sum_{1\leq r<s\leq n} \alpha^g_{rs}g\ot 
v_r^*\wedge v_s^*
$$
for some scalars $\alpha^g_{rs}$. 
For each $i<j$ and $g\in G$, let $\kappa_g(v_i,v_j) = \alpha^g_{ij}$, so 
that 
$$
\sum_{g\in G} \kappa_g(v_i,v_j) g = \sum_{g\in G} \alpha^g_{ij} g 
= \alpha(v_i\ot v_j - q_{ij} v_j\ot v_i).
$$
We will check that the conditions of Theorem~\ref{thm: LS} hold,
and the conclusion will follow. 

For all $i<j<k$, $d_3^*(\alpha)$ applied to $1\ot 1\ot v_i\wedge v_j\wedge 
v_k$
yields
$$
\begin{aligned}
 & \alpha((v_i\ot 1 - q_{ij}q_{ik}\ot v_i)\ot v_j\wedge v_k 
    - (q_{ij}v_j\ot 1 - q_{jk}\ot v_j)\ot v_i\wedge v_k \\
 &\hspace{5cm}   + (q_{ik}q_{jk}v_k\ot 1 - 1\ot v_k)\ot v_i\wedge v_j)\\
 & = v_i\alpha(v_j \wedge v_k)-q_{ij}q_{ik}\alpha(v_j\wedge v_k) v_i
   -q_{ij}v_j\alpha(v_i\wedge v_k) + q_{jk}\alpha(v_i\wedge v_k) v_j\\
 &\hspace{5cm}   + q_{ik}q_{jk}v_k\alpha(v_i\wedge v_j) - \alpha(v_i\wedge 
v_j)v_k\\
 & =\sum_{g\in G} (\alpha^g_{jk}v_i g - q_{ij}q_{ik}\alpha^g_{jk}gv_i
   -q_{ij}v_j\alpha^g_{ik}g + q_{jk}\alpha^g_{ik}gv_j 
    + q_{ik}q_{jk}v_k\alpha^g_{ij}g -\alpha^g_{ij}gv_k).
\end{aligned}
$$
So $d_3^*(\alpha)=0$ if and only if
\begin{equation}\label{alphagjk}
  \alpha^g_{jk}(v_i-q_{ij}q_{ik} \lexp{g}{v_i}) 
-\alpha_{ik}^g(q_{ij}v_j-q_{jk} \lexp{g}{v_j})
   + \alpha^g_{ij}(q_{ik}q_{jk}v_k - \lexp{g}{v_k})=0
\end{equation}
for all $g\in G$, and $i,j,k$. This is indeed condition (i) of Theorem 
\ref{thm: LS}:
Replace $\kappa_g(v_j,v_i)$ by $-q_{ji}\kappa_g(v_i,v_j)$, and similarly 
for the others.
Then multiply the equation by $q_{ij}q_{ik}q_{jk}$. 

We claim that $G$-invariance of $\alpha$ is equivalent to condition (ii)
of Theorem \ref{thm: LS}: $\alpha$ is $G$-invariant if, and only if,
$\alpha(\lexp{h}{v_i} \wedge  \lexp{h}{v_j}) = \lexp{h}{(\alpha(v_i\wedge 
v_j))}$ for all $i,j$, and all $h\in G$.
Using the notation $h(v_i) = \sum_k h_k^i v_k$, we have 
\begin{eqnarray*}
  \alpha( \lexp{h}{v_i} \wedge  \lexp{h}{v_j}) & = & \sum_{k,l} h^i_kh^j_l 
\alpha(v_k\wedge v_l)\\
           &=& \sum_{k<l} h^i_kh^j_l\alpha(v_k\wedge v_l) - \sum_{k<l} 
q_{lk}
   h^i_lh^j_k \alpha(v_k\wedge v_l)\\
    &=& \sum_{k<l, \  g\in G} (h^i_kh^j_l - q_{lk}h^i_lh^j_k) 
\alpha^g_{kl} g
\end{eqnarray*}
and
$$
  \lexp{h}{(\alpha(v_i\wedge v_j))} = \lexp{h}{\left(\sum_{g\in G} 
\alpha^g_{ij}g\right)}
   = \sum_{g\in G} \alpha^g_{ij} hgh^{-1} = \sum_{g\in G} 
\alpha^{h^{-1}gh}_{ij} g .
$$
Equating the two, we find that
$$
  \alpha^{h^{-1}gh}_{ij} = \sum_{k<l} (h^i_kh^j_l-q_{lk}h^i_lh^j_k) 
\alpha^g_{kl}.
$$
By the proof of Lemma \ref{two-actions}, $q_{lk} h^i_lh^j_k = 
q_{ij}h^i_lh^j_k$, so we
may rewrite this as 
$$
  \alpha^{h^{-1}gh}_{ij} = \sum_{k<l} (h^i_kh^j_l - q_{ij}h^i_lh^j_k) 
\alpha^g_{kl}.
$$
Substituting $\alpha_{ij}^{h^{-1}gh} = - q_{ij}\alpha_{ji}^{h^{-1}gh}$, 
$\alpha_{kl}^g=-q_{kl} \alpha^g_{lk}$, $q_{kl} h^i_kh^j_l = 
q_{ij}h^i_kh^j_l$, and
$q_{kl}h^j_k h^i_l = q_{ji}h^j_kh^i_l$,
this is precisely Theorem \ref{thm: LS}(ii).
\end{proof}

Note that the hypothesis in Theorem \ref{thm: constant} is necessary
for the statement to make sense:
It is the action of $G$ on $\Wedge_{\bf q}(V)$ that allows cocycles
to be expressed in terms of the quantum Koszul complex, a {\em constant}
cocycle being defined via such an expression.

Combining Theorems~\ref{main-thm} and \ref{thm: constant}, we obtain the 
following.

\begin{theorem}
\label{thm: constant lifts}
Assume that the action of $G$ on $V$ extends to an action on $\Wedge_{\bf 
q}(V)$
by algebra automorphisms. Then each constant Hochschild 2-cocycle on 
$S_{\bf q}(V)\rtimes G$
lifts to a deformation of $S_{\bf q}(V)\rtimes G$ over $\CC[t]$. 
\end{theorem}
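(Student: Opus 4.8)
The plan is to combine the two main theorems already established. By Theorem~\ref{thm: constant}, each constant Hochschild 2-cocycle $\alpha$ on $S_{\bf q}(V)\rtimes G$ gives rise to a quantum Drinfeld Hecke algebra $\mH_{\mathbf{q},\kappa}$, where the functions $\kappa_g$ are read off from $\alpha$ via the identification in the proof of that theorem, namely $\kappa_g(v_i,v_j) = \alpha^g_{ij}$ for $i<j$ (equivalently $\sum_{g}\kappa_g(v_i,v_j)g = \alpha(v_i\ot v_j - q_{ji}v_j\ot v_i)$). Since $\kappa$ satisfies the Levandovskyy--Shepler conditions, the corresponding algebra $\mH_{\mathbf{q},\kappa,t}$ over $\CC[t]$ has associated graded algebra isomorphic to $S_{\bf q}(V)\rtimes G[t]$: the PBW-type argument of Theorem~\ref{thm: LS} works verbatim with $t$ as a central degree-zero parameter, or one simply observes that specializing $t$ to any nonzero scalar recovers $\mH_{\mathbf{q},\kappa}$ and that the dimension count in each graded degree is independent of $t$. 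Hence $\mH_{\mathbf{q},\kappa,t}$ is a quantum Drinfeld Hecke algebra over $\CC[t]$.

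Next I would invoke Theorem~\ref{main-thm}: every quantum Drinfeld Hecke algebra over $\CC[t]$ is a deformation of $S_{\bf q}(V)\rtimes G$ over $\CC[t]$ with $\deg\mu_i = -2i$ for all $i\geq 1$. Applying this to $\mH_{\mathbf{q},\kappa,t}$ produces a deformation $B := \mH_{\mathbf{q},\kappa,t}$ of $S_{\bf q}(V)\rtimes G$ whose multiplication has the form $r*s = rs + \mu_1(r\ot s)t + \mu_2(r\ot s)t^2 + \cdots$ with $\deg\mu_1 = -2$. The final point to verify is that $\mu_1$, as a degree-$(-2)$ Hochschild 2-cocycle, agrees (up to coboundary, which suffices for ``lifting'') with the given cocycle $\alpha$ under the identification of constant cocycles with elements of $(\CC G\ot\Wedge^2_{{\bf q}^{-1}}(V^*))^G$. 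This follows from the explicit relationship recorded at the end of Section~\ref{QDHAs and deformations}: $\sum_{g\in G}\kappa_g(v_j,v_i)g = \mu_1(v_j\ot v_i) - q_{ji}\mu_1(v_i\ot v_j)$, which is exactly the relation defining how $\alpha$ was turned into $\kappa$ in the proof of Theorem~\ref{thm: constant}. So the $\mu_1$ of the deformation $B$, evaluated on the relevant arguments, reproduces $\alpha$; since $\mu_1$ has degree $-2$ it is determined on $V\ot V$ with values in $\CC G$, i.e., it is the constant cocycle $\alpha$ (possibly modulo a coboundary, if the asymmetric choice of $\Psi_2$ introduces one). Therefore $\alpha$ lifts to the deformation $B$.

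I would write this up as a short proof: \emph{Let $\alpha$ be a constant Hochschild 2-cocycle. By Theorem~\ref{thm: constant}, $\alpha$ gives rise to a quantum Drinfeld Hecke algebra, and the same functions $\kappa_g$ define a quantum Drinfeld Hecke algebra $\mH_{\mathbf{q},\kappa,t}$ over $\CC[t]$. By Theorem~\ref{main-thm}, $\mH_{\mathbf{q},\kappa,t}$ is a deformation of $S_{\bf q}(V)\rtimes G$ over $\CC[t]$ with $\deg\mu_i = -2i$; in particular $\mu_1$ is a constant Hochschild 2-cocycle. By the explicit formula $\sum_{g}\kappa_g(v_j,v_i)g = \mu_1(v_j\ot v_i) - q_{ji}\mu_1(v_i\ot v_j)$ displayed after Theorem~\ref{main-thm}, together with the definition of $\kappa_g$ in the proof of Theorem~\ref{thm: constant}, this $\mu_1$ represents the class of $\alpha$. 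Hence $\alpha$ lifts to the deformation $\mH_{\mathbf{q},\kappa,t}$.\/} The main obstacle, and the only genuinely nontrivial bookkeeping, is matching up the normalization conventions: making sure the correspondence ``constant cocycle $\leftrightarrow$ element of $(\CC G\ot\Wedge^2_{{\bf q}^{-1}}(V^*))^G$'' induced by $\Theta_2^*\R_2\Psi_2^*$ is compatible with the $\kappa\leftrightarrow\mu_1$ relation, given the deliberately asymmetric choice $\Psi_2(1\ot v_i\ot v_j\ot 1)=0$ for $i\geq j$. If a discrepancy by a coboundary arises, it is harmless for the statement as phrased, since ``lifts to a deformation'' only requires that the cohomology class of $\alpha$ be realized as the first-order term.
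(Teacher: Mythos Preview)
Your proposal is correct and follows exactly the approach the paper takes: the paper simply states that Theorem~\ref{thm: constant lifts} is obtained by ``combining Theorems~\ref{main-thm} and \ref{thm: constant}'' and gives no further proof. Your additional care in verifying that the resulting $\mu_1$ represents the class of the original $\alpha$ (and in noting the passage from $\mH_{\mathbf{q},\kappa}$ to $\mH_{\mathbf{q},\kappa,t}$) is reasonable bookkeeping that the paper leaves implicit.
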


\end{section}
\begin{section}{Diagonal actions} \label{diagonal}

As before, let $G$ denote a finite group acting linearly on a vector space 
$V$.
In case $v_1,\ldots,v_n$ is a basis of common eigenvectors for $G$,
there is always an induced action of $G$ on $S_{\bf q}(V)$ 
and on $\Wedge_{\bf q}(V)$ by algebra automorphisms.
In this case, the Hochschild cohomology of $S_{\bf q}(V)\rtimes G$ 
was computed in 
\cite{NSW}, and we apply those results in this section: 
We give an explicit description of those elements of 
$\HH^2(S_{\bf q}(V) \rtimes G)$ corresponding to maps of degree~$-2$. 
Let $\lambda_{g,i}\in \CC$ be the scalars for which 
$\lexp{g}{v_i} = \lambda_{g,i}v_i$ for all 
$g \in G, i=1,\ldots,n$.

For each $g \in G$, define
\begin{equation}\label{Cg}
C_g := \left\{ \gamma \in (\N \cup \{-1\})^n \mid 
\text{ for each } i \in \{1, \ldots, n\}, \;
\prod_{s=1}^n q_{is}^{\gamma_s} = \lambda_{g,i} \text{ or } \gamma_i = -1 
\right\}.
\end{equation}

We recall the following from \cite{NSW}.

\begin{theorem}[\cite{NSW}]
If  $G$ acts diagonally on  $V$, then 
$\HH^{\bu}(S_{\bf q}(V),S_{\bf q}(V) \rtimes G)$ is the graded vector 
subspace of 
$(S_{\bf q}(V) \rtimes G) \ot \Wedge_{{\bf q}^{-1}}(V^*)$ given by:
$$
\HH^m(S_{\bf q}(V),S_{\bf q}(V) \rtimes G) 
\cong  \bigoplus_{g \in G}
\bigoplus_{\substack{\beta \in \O^n \\ |\beta| = m}} 
\bigoplus_{\substack{\alpha \in \N^n \\ \alpha - \beta \in C_g}}
\span_{\CC}\{(v^\alpha g) \ot {(v^*)}^{\wedge \beta}\},
$$
for all $m \in \N$, and $\HH^m(S_{\bf q}(V)\rtimes G)$ is its 
$G$-invariant subspace.
\end{theorem}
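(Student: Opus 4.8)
The plan is to reduce the statement to the computation in \cite{NSW} by identifying the right cochain complex and then extracting the degree data. First I would recall from \cite{NSW} that the Hochschild cohomology $\HH^{\bu}(S_{\bf q}(V), S_{\bf q}(V)\rtimes G)$ is computed by applying $\Hom_{A^e}(-, A\rtimes G)$ to the quantum Koszul resolution \eqref{label: free resolution}, which for $A = S_{\bf q}(V)$ and a diagonal action breaks up as a direct sum over $g\in G$ of the complexes \eqref{new Hom(resolution) with G}. The essential simplification in the diagonal case is that, since $\lexp{g}{v_i} = \lambda_{g,i}v_i$, the differential $d_m^*$ given in \eqref{formula for d_m^* with G} becomes, on a basis element $v^\alpha g\ot (v^*)^{\wedge\beta}$, a sum of terms each of which is again a scalar multiple of a basis element of the form $v^{\alpha+[i]}g\ot(v^*)^{\wedge(\beta+[i])}$; the scalar is $\left(\prod_{s=1}^i q_{s,i}^{\beta_s}\right) - \lambda_{g,i}\left(\prod_{s=i}^n q_{i,s}^{\beta_s}\right)$ up to the $q$-factor coming from writing $v_i v^\alpha = (\prod_s q_{i,s}^{\alpha_s}) v^{\alpha+[i]}$, and in particular it depends only on $\alpha - \beta$ through the condition defining $C_g$ in \eqref{Cg}. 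Thus the complex splits as a direct sum, over $g$ and over the ``diagonal'' translate classes, of very small complexes whose cohomology is immediate to read off.

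The key steps, in order, are: (1) record the explicit form of $d_m^*$ on monomial basis elements $v^\alpha g \ot (v^*)^{\wedge\beta}$ in the diagonal case, pulling the $q$-powers out so each term's coefficient is visibly $\prod_{s\le i} q_{s,i}^{\beta_s} - \lambda_{g,i}\prod_{s\ge i} q_{i,s}^{\beta_s}$ times a monomial; (2) observe that the coefficient of the $i$-th term vanishes precisely when $\prod_{s=1}^n q_{i,s}^{(\alpha-\beta)_s} = \lambda_{g,i}$, i.e. the condition appearing in the definition of $C_g$ with $\gamma = \alpha-\beta$ (here one uses $q_{ii}=1$ and $q_{ji}=q_{ij}^{-1}$ to rewrite $\prod_{s\le i} q_{s,i}^{\beta_s}\big/\prod_{s\ge i}q_{i,s}^{\beta_s}$ as $\prod_s q_{i,s}^{-\beta_s}$, matched against the $\alpha$-contribution); (3) conclude that the subcomplex spanned by all $v^\alpha g\ot (v^*)^{\wedge\beta}$ with $\alpha-\beta$ in a fixed translate class has zero differential on exactly the part where $\alpha-\beta\in C_g$, and is exact elsewhere, so that $\Ker d^*_{m+1}/\Im d^*_m$ is spanned by the $(v^\alpha g)\ot(v^*)^{\wedge\beta}$ with $|\beta| = m$ and $\alpha-\beta\in C_g$; (4) sum over $g\in G$ and over $\beta$ and $\alpha$ to obtain the stated formula for $\HH^m(S_{\bf q}(V), S_{\bf q}(V)\rtimes G)$; and (5) invoke the general fact, recorded near the start of Section~\ref{two resolutions} (and in \cite{S}), that $\HH^m(S_{\bf q}(V)\rtimes G)\cong \HH^m(S_{\bf q}(V), S_{\bf q}(V)\rtimes G)^G$ to get the last sentence.

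Since this theorem is quoted from \cite{NSW}, the honest approach is to cite it directly and include only as much of the above as is needed to make the statement unambiguous in the present notation; the main obstacle, such as it is, is purely bookkeeping: making sure the indices on ${\bf q}$ (which are reversed relative to some sources), the placement of $v^\alpha$ to the left of $g$, and the passage to $\Wedge_{{\bf q}^{-1}}(V^*)$ on the dual side are all consistent, so that the condition $\prod_{s=1}^n q_{is}^{\gamma_s} = \lambda_{g,i}$ in \eqref{Cg} really is the vanishing condition for the relevant coefficient of $d_m^*$. Once that compatibility check is done, no further argument is required beyond citing \cite{NSW}.
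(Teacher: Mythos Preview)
The paper does not prove this theorem at all: it is stated with the attribution ``[\cite{NSW}]'' and immediately followed by ``We immediately obtain the following,'' so the paper's own ``proof'' is simply the citation. You correctly identify this in your final paragraph, and your sketch of the underlying computation (splitting the diagonal-action complex into monomial pieces indexed by $\alpha-\beta$ and reading off the vanishing condition as membership in $C_g$) is a faithful outline of how the result in \cite{NSW} is obtained; for the purposes of this paper, the citation alone suffices.
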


We immediately obtain the following. 

\begin{corollary}\label{cor: abelian}
The subspace of 
$\ds \HH^2(S_{\bf q}(V), S_{\bf q}(V) \rtimes G)$ consisting
of constant Hochschild 2-cocycles is isomorphic to 
$\ds \bigoplus_{g \in G} \bigoplus_{\substack{{r<s} \mathrm{\; s.t.} \\ 
\forall \, r' \neq r,s \\ q_{rr'}q_{sr'}=\lambda_{g,r'}}} 
\span_{\CC}\{g \ot v_r^* \wedge v_s^*\} . $
\end{corollary}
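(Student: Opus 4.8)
\textbf{Proof plan for Corollary~\ref{cor: abelian}.}
The plan is to specialize the theorem of \cite{NSW} quoted just above to cohomological degree $m = 2$ and to the ``constant'' part, i.e.\ those cocycles of degree $-2$ as maps from $(S_{\bf q}(V)\rtimes G)^{\ot 2}$ to $S_{\bf q}(V)\rtimes G$. First I would recall, as noted in Section~\ref{two resolutions}, that under the identification of $\HH^m(S_{\bf q}(V), S_{\bf q}(V)\rtimes G)$ with a subspace of $(S_{\bf q}(V)\rtimes G)\ot\Wedge_{{\bf q}^{-1}}(V^*)$, a basis element $(v^\alpha g)\ot (v^*)^{\wedge\beta}$ with $|\beta| = m$ corresponds (via the chain map $\Psi_m$, which raises homological degree while lowering internal degree) to a cochain on the bar complex of internal degree $|\alpha| - |\beta|$. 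Hence the constant $2$-cocycles are exactly those with $m = |\beta| = 2$ and $|\alpha| = 0$, i.e.\ $\alpha = 0$ and $\beta = [r] + [s]$ for some $r < s$, so that $v^\alpha g = g$ and $(v^*)^{\wedge\beta} = v_r^*\wedge v_s^*$.

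Next I would read off from the theorem which such terms actually occur. For $m = 2$, $\alpha = 0$, and $\beta = [r]+[s]$ with $r<s$, the summand $\span_\CC\{(v^\alpha g)\ot(v^*)^{\wedge\beta}\}$ is present precisely when $\alpha - \beta = -[r]-[s] \in C_g$. Unwinding the definition~\eqref{Cg} of $C_g$ with $\gamma = -[r]-[s]$: for each index $i$, we need either $\gamma_i = -1$ or $\prod_{s'=1}^n q_{is'}^{\gamma_{s'}} = \lambda_{g,i}$. For $i = r$ and $i = s$ the coordinate $\gamma_i$ equals $-1$, so those two conditions are automatic. For $i = r'$ with $r'\neq r,s$ we have $\gamma_{r'} = 0 \neq -1$, so we need $\prod_{s'} q_{r's'}^{\gamma_{s'}} = q_{r'r}^{-1}q_{r's'}^{\,0}\cdots = q_{r'r}^{-1}q_{r's}^{-1} = \lambda_{g,r'}$; using $q_{r'r}^{-1} = q_{rr'}$ and $q_{r's}^{-1} = q_{sr'}$ this is exactly the stated condition $q_{rr'}q_{sr'} = \lambda_{g,r'}$ for all $r'\neq r,s$. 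Thus $-[r]-[s]\in C_g$ if and only if that condition holds, and summing over $g\in G$ and over admissible pairs $r<s$ gives the claimed description.

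The only genuinely delicate point is the very first step: justifying that the constant $2$-cocycles correspond exactly to the terms with $\alpha = 0$. This is the same observation already invoked in Section~\ref{two resolutions} for the general (non-diagonal) case, namely that ``those elements \dots\ that correspond to \emph{constant} Hochschild two-cocycles \dots\ are precisely those in $(\CC G\ot\Wedge_{{\bf q}^{-1}}^2(V^*))^G$, due to the form of the chain map $\Psi_2$.'' Concretely, $\Psi_2(1\ot v_i\ot v_j\ot 1) = 1\ot 1\ot v_i\wedge v_j$ (for $i<j$) and $0$ otherwise, so pulling a cochain $(v^\alpha g)\ot v_r^*\wedge v_s^*$ back along $\Psi_2$ produces, up to the $G$-averaging in~\eqref{composition}, the map $v_i\ot v_j\mapsto v^\alpha g$ for $\{i,j\} = \{r,s\}$; this is homogeneous of degree $|\alpha| - 2$, hence of degree $-2$ precisely when $|\alpha| = 0$. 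Since the diagonal-action theorem already incorporates $G$-invariance in its last sentence, taking the $G$-invariant subspace is harmless here — $G$ permutes the summands indexed by $g$ by conjugation and scales within each by the $\lambda_{g,i}$, but the statement of the corollary is phrased before imposing invariance, matching the ``$\HH^2(S_{\bf q}(V), S_{\bf q}(V)\rtimes G)$'' in its hypothesis — so no further work is needed and the corollary follows ``immediately'' as claimed.
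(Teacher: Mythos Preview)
Your proposal is correct and follows exactly the route the paper intends: specialize the \cite{NSW} theorem to $m=2$ and $\alpha=0$, then unwind the membership condition $-[r]-[s]\in C_g$ to obtain $q_{rr'}q_{sr'}=\lambda_{g,r'}$ for all $r'\neq r,s$. The paper records no argument beyond ``We immediately obtain the following,'' and your write-up is precisely the computation that justifies that claim.
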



Let $\mathcal R$ denote a complete set of representatives of conjugacy 
classes in $G$, 
let $C_G(a)$ denote the centralizer of $a$ in $G$, and let $[G/C_G(a)]$ 
denote
a set of representatives of the cosets of $C_G(a)$ in $G$.
Combining Theorems \ref{thm: SW2}, \ref{thm: constant}, and 
Corollary~\ref{cor: abelian}, we obtain the following theorem. The 
notation $\delta_{i,j}$ is the Kronecker
delta. We note that the scalar $\lambda_{g,r}^{-1}\lambda_{g,s}^{-1}$ in 
the sum below
is independent of choice of representative $g$ of a coset of $C_G(a)$ under
the given assumption that $\lambda_{h,r}=\lambda_{h,s}^{-1}$ for all $h\in 
C_G(a)$.

\begin{theorem}
The maps $\kappa: V \times V \to \CC G$ for which $\mH_{{\bf q}, \kappa }$ 
is a quantum Drinfeld Hecke algebra form
a vector space with basis consisting of maps 
\[
f_{r,s,a}: V \times V \to \CC G: (v_i, v_j) \mapsto 
    (\delta_{i,r} \delta_{j, s} - q_{sr} \delta_{i,s} \delta_{j, r}) 
\sum_{g\in [G/C_G(a)]} \lambda_{g,r}^{-1}
        \lambda_{g,s}^{-1} \ gag^{-1}
\]
where $r < s$ and $a\in {\mathcal R}$ satisfy  
$ \ q_{rr'}q_{sr'}=\lambda_{a,r'}$ for all $r' \neq r,s$ and $\lambda_{h,r}
   = \lambda_{h,s}^{-1}$ for all $h\in C_G(a)$.
\end{theorem}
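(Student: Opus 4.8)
The plan is to combine the structural results already in hand: Theorem~\ref{thm: constant} (every constant Hochschild 2-cocycle lifts to a quantum Drinfeld Hecke algebra, under the standing hypothesis that $G$ acts diagonally, so that the action automatically extends to $\Wedge_{\bf q}(V)$), Corollary~\ref{cor: abelian} (an explicit description of the space of constant 2-cocycles on $S_{\bf q}(V)\rtimes G$ before taking $G$-invariants), and the isomorphism $\Theta_2^*\R_2\Psi_2^*$ of Theorem~\ref{thm: SW2} together with the explicit formula~\eqref{composition}. By Theorem~\ref{thm: constant} and Theorem~\ref{main-thm}, the maps $\kappa$ giving quantum Drinfeld Hecke algebras correspond bijectively and linearly to the $G$-invariant constant Hochschild 2-cocycles, via $\sum_g\kappa_g(v_i,v_j)g=\mu_1(v_i\ot v_j)-q_{ji}\mu_1(v_j\ot v_i)$; so it suffices to produce a $\CC$-basis for $\left(\bigoplus_{g\in G}\HH^2(S_{\bf q}(V),S_{\bf q}(V)\rtimes G)_{\mathrm{const}}\right)^G$ and then read off the corresponding $\kappa$ via~\eqref{composition}.

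First I would fix a pair $r<s$ and an element $a\in G$ satisfying the two stated conditions, namely $q_{rr'}q_{sr'}=\lambda_{a,r'}$ for all $r'\neq r,s$ and $\lambda_{h,r}=\lambda_{h,s}^{-1}$ for all $h\in C_G(a)$. The first condition is exactly the condition appearing in Corollary~\ref{cor: abelian} that makes $a\ot v_r^*\wedge v_s^*$ a (constant) cocycle in $\HH^2(S_{\bf q}(V),S_{\bf q}(V)\rtimes G)$; the second condition says that $a\ot v_r^*\wedge v_s^*$ is fixed by the centralizer $C_G(a)$ (since $h$ acts on $v_r^*\wedge v_s^*$ by $\lambda_{h,r}^{-1}\lambda_{h,s}^{-1}$ and by $hah^{-1}=a$ on the group part). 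One then forms the $C_G(a)$-relative orbit sum $\sum_{g\in[G/C_G(a)]}\lexp{g}{(a\ot v_r^*\wedge v_s^*)}=\sum_{g\in[G/C_G(a)]}\lambda_{g,r}^{-1}\lambda_{g,s}^{-1}\,(gag^{-1})\ot v_{g\cdot r}^*\wedge v_{g\cdot s}^*$ — more precisely one must be careful that $g$ need not permute the basis, so $\lexp{g}{v_r^*}=\lambda_{g,r}^{-1}v_r^*$ only when $G$ acts diagonally on the chosen basis, which is our standing hypothesis — giving a $G$-invariant constant cocycle. Standard representation-theoretic bookkeeping (the orbit–stabilizer decomposition of the $G$-action on the index set of pairs-with-group-elements, exactly as in the classical Drinfeld Hecke case) shows these orbit sums, indexed by conjugacy-class representatives $a\in\mathcal R$ together with admissible pairs $r<s$, form a basis of the $G$-invariants; here the second condition guarantees the stabilizer of $a\ot v_r^*\wedge v_s^*$ is all of $C_G(a)$, so the orbit sum is genuinely invariant and nonzero.

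Next I would apply formula~\eqref{composition} to the orbit-sum cocycle $\alpha=\sum_{g\in[G/C_G(a)]}\lambda_{g,r}^{-1}\lambda_{g,s}^{-1}(gag^{-1})\ot v_r^*\wedge v_s^*$ to extract the associated $\mu_1$, and then use the relation $\sum_g\kappa_g(v_i,v_j)g=\mu_1(v_i\ot v_j)-q_{ji}\mu_1(v_j\ot v_i)$ from the end of Section~\ref{QDHAs and deformations}. Because the action is diagonal, $\Psi_2(1\ot\lexp{h^{-1}}{v_i}\ot\lexp{h^{-1}}{v_j}\ot 1)=\lambda_{h^{-1},i}\lambda_{h^{-1},j}\,\Psi_2(1\ot v_i\ot v_j\ot 1)$, and $\Psi_2(1\ot v_i\ot v_j\ot 1)$ is $1\ot 1\ot v_i\wedge v_j$ for $i<j$ and $0$ otherwise (and $-q_{ji}(1\ot 1\ot v_j\wedge v_i)$ worth of contribution shows up in the antisymmetrization), so the Reynolds average collapses and one reads off $\kappa_g(v_i,v_j)=(\delta_{i,r}\delta_{j,s}-q_{sr}\delta_{i,s}\delta_{j,r})\cdot(\text{coefficient of }g)$, producing exactly $f_{r,s,a}$. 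The bulk of this is the routine unwinding of~\eqref{composition}; I expect the main obstacle to be purely bookkeeping: keeping track of the scalars $\lambda_{g,\cdot}$ under the (possibly non-monomial but, by diagonality, monomial) group action, verifying that $\lambda_{g,r}^{-1}\lambda_{g,s}^{-1}$ is well-defined on cosets of $C_G(a)$ (which is precisely where the hypothesis $\lambda_{h,r}=\lambda_{h,s}^{-1}$ for $h\in C_G(a)$ is used), and confirming linear independence of the $f_{r,s,a}$, which follows because distinct $(r,s,a)$ contribute to disjoint "supports" in $\bigoplus_g\CC G\ot\Wedge^2_{{\bf q}^{-1}}(V^*)$ indexed by the pair $(r,s)$ and the conjugacy class of $a$. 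No genuinely hard step arises — everything reduces to assembled earlier results plus the orbit-stabilizer count.
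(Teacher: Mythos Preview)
Your proposal is correct and follows essentially the same approach as the paper's proof. The only difference is organizational: the paper applies $\Theta_2^*\R_2\Psi_2^*$ directly to the single non-invariant cocycle $a\ot v_r^*\wedge v_s^*$ (invoking the surjectivity clause of Theorem~\ref{thm: SW2}) and extracts the condition $\lambda_{h,r}=\lambda_{h,s}^{-1}$ from the nonvanishing of the resulting character sum $\sum_{h\in C_G(a)}\lambda_{h,r}\lambda_{h,s}$ via the homomorphism $C_G(a)\to\CC^\times$, $h\mapsto\lambda_{h,r}\lambda_{h,s}$, whereas you pre-form the orbit sum and phrase the same condition as a stabilizer constraint---the two computations are identical.
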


\begin{proof}
Let $\alpha = a \ot v_r^* \wedge v_s^*$ where $a \in G$ and $r<s$ satisfy 
$ \ q_{rr'}q_{sr'}=\lambda_{a,r'}$ for all $r' \neq r,s$. Thus, $\alpha$ 
represents a class in
$\HH^2(S_{\bf q}(V), S_{\bf q}(V) \rtimes G)$ that is {\em constant}, and 
by 
Corollary~\ref{cor: abelian}(ii), all constant Hochschild 2-cocycles have
this form. 
Applying \eqref{composition}, we get
\[
[\Theta_2^* \R_2   \Psi_2^* (\alpha)]
(v_i  \ot v_j ) = \frac{1}{|G|} \delta_{i,r} \delta_{j, s} \left(\sum_{h 
\in C_G(a)} \lambda_{h,r}\lambda_{h,s}\right)
	\sum_{g\in [G/C_G(a)]} \lambda_{g,r}^{-1} \lambda_{g,s}^{-1} \ 
gag^{-1}
\]
The sum $\sum_{h \in C_G(a)} \lambda_{h,r}\lambda_{h,s}$ is nonzero if and 
only if 
$\lambda_{h,r} = \lambda_{h,s}^{-1}$ for all $h\in C_G(a)$. To see this, 
define a 
homomorphism $\varphi:C_G(a) \to \CC^\times:h \mapsto 
\lambda_{h,r}\lambda_{h,s}$.
Since $\Im \varphi$ is finite, we have $\Im \varphi = \C_n$ for some $n$, 
where
$\C_n$ is the group of $n$th roots of unity. Thus,
\[
\sum_{h \in C_G(a)} \lambda_{h,r}\lambda_{h,s} = \sum_{h \in C_G(a)} 
\varphi(h) = |\ker \varphi| \sum_{\xi \in \C_n}\xi.
\]
The last expression is nonzero if and only if the group $\C_n$ is trivial, 
equivalently 
$\lambda_{h,r} = \lambda_{h,s}^{-1}$ for all $h\in C_G(a)$. The stated 
result now follows from
Theorems \ref{thm: SW2} and \ref{thm: constant}.
\end{proof}

One readily finds many examples of quantum Drinfeld Hecke algebras by
applying this theorem.
We illustrate next with one example in which the scalars $q_{ij}$ are not 
all 1 or $-1$. 

\begin{example}
Let $G$ be a cyclic group of order 3 generated by $g$.
Let $q$ be a primitive third root of 1.
Let $V = \CC^3$, with basis $v_1, v_2, v_3$, and let $q_{21}=q$,
$q_{32}=q$, and $q_{13}=q$. 
Let $G$ act on $V$ as follows: 
$$
    \lexp{g}{v_1} = q v_1, \ \ \ \lexp{g}{v_2} = q^2 v_2, \ \ \ 
\lexp{g}{v_3} = v_3.
$$
By the previous theorem, the map $\kappa:=f_{1,2,g}$ gives rise to a 
quantum
Drinfeld Hecke algebra, 
which is
$$ \mH_{\mathbf{q}, \kappa} = T(V)\rtimes G / (v_2v_1-qv_1v_2 + qg , \ 
      v_3v_2-qv_2v_3, \ v_1v_3-qv_3v_1).
$$
\end{example}

\end{section}
\begin{section}{Complex reflection groups: Natural representations} 
\label{natural}

In this section, we classify the quantum Drinfeld Hecke algebras for the 
complex reflection groups $G(m,p,n), \, n \geq 4$,
acting naturally on a vector space of dimension $n$. Specifically, we show 
that, with the exception of certain special cases,
there do not exist any nontrivial quantum Drinfeld Hecke algebras.

Let $m,p,n$ be positive integers with $p$ dividing $m$.
Let $\zeta$ denote a primitive $m$th root of unity and 
let 
$$
\zeta_i = \text{diag}(1, \ldots, \zeta, \ldots, 1)
$$
denote the diagonal matrix having $\zeta$ in the $i$th position.

Each element of $G(m,1,n)$ can be written uniquely in the form
$$
\zeta^\lambda \sigma \qquad \text{where } \lambda \in (\Z/m\Z)^n, \; 
\sigma \in G(1,1,n)\cong S_n,
$$
and
$$
\zeta^\lambda = \zeta_1^{\lambda_1} \zeta_2^{\lambda_2} \cdots 
\zeta_n^{\lambda_n}.
$$
The multiplication is determined by
$$
(\zeta^\lambda \sigma)(\zeta^\nu \tau) = \zeta^{\lambda + \sigma \cdot 
\nu}\sigma \tau,
$$
where $S_n$ acts on $(\Z/m\Z)^n$ by  permuting the coordinates, that is, 
$(\sigma \cdot \lambda)_i = \lambda_{\sigma^{-1}(i)}$.

The group $G(m,p,n)$ is defined to be the following subgroup of $G(m,1,n)$:
\[
G(m,p,n) = \{\zeta^\lambda \sigma \in G(m,1,n) \mid \lambda_1 + \lambda_2 
+ \cdots + \lambda_n \equiv_p 0\},
\]
where $ \equiv_p$ denotes equivalence modulo $p$.

Let $V$ be a vector space with an ordered basis $v_1,\ldots,v_n$. 
Let $S_\mathbf{-1}(V)$ be the algebra generated by $v_1,\ldots,v_n$ 
subject to the
relations $v_iv_j =-v_jv_i$ for all $i\neq j$.
In this case the corresponding quantum exterior algebra $\Wedge_{\bf 
{-1}}(V)$
is commutative: $v_i\wedge v_j=v_j\wedge v_i$ for all $i,j$.
Set $G:=G(m,p,n)$ and consider the natural action of $G$ on $V$. This 
action extends to
an action of $G$ on $S_\mathbf{-1}(V)$ by algebra automorphisms.

We begin with the following:

\begin{theorem}
\label{thm: constant cocycles}
Assume that $n \geq 4$.
The constant Hochschild 2-cocycles 
representing elements in $\HH^2(S_\mathbf{-1}(V), S_\mathbf{-1}(V)\rtimes 
G(m,p,n))$
form a vector space with basis all
$$
\begin{aligned}
 & \zeta^\lambda \ot v_r^*\wedge v_s^* \qquad 
(\lambda_{r'} \equiv_m 0 \text{ for all } {r'} \not \in \{r, s\} \text{ 
and } \lambda_r + \lambda_s \equiv_p 0),\\
 & \zeta^\lambda(rs)\ot v_r^*\wedge v_s^* \qquad 
(\lambda_{r'} \equiv_m 0 \text{ for all } {r'} \not \in \{r, s\} \text{ 
and } \lambda_r + \lambda_s \equiv_p 0),\\
 & \zeta^{\lambda_s} \left[\zeta^\lambda (rs)\right] \ot v_r^*\wedge 
v_{r'}^* + \zeta^\lambda (rs)\ot v_s^*\wedge v_{r'}^* \qquad 
(\lambda_{s'} \equiv_m 0 \text{ for all } {s'} \not \in \{r, s\} \text{ 
and } \lambda_r+\lambda_s \equiv_m 0), \\
 & \zeta^{\lambda_s}\left[\zeta^\lambda(rs)(r's')\right] \ot v_r^*\wedge 
v_{r'}^* 
  + \zeta^{\lambda_s+\lambda_{r'}}\left[\zeta^\lambda(rs)({r'}s')\right] 
\ot v_r^*\wedge v_{s'}^* \\
 &\hspace{1in}+ \zeta^\lambda(rs)(r's') \ot v_s^*\wedge v_{r'}^* 
  + \zeta^{\lambda_{r'}} \left[\zeta^\lambda(rs)(r's')\right] \ot 
v_s^*\wedge v_{s'}^*\\ 
 &  \hspace{1in} (\lambda_t \equiv_m 0 \text{ for all } t \not\in 
\{r,r',s,s'\} \text{ and } 
\lambda_r+\lambda_s \equiv_m 0 \equiv_m \lambda_{r'}+\lambda_{s'}),\\
 & \zeta^{\lambda_s+\lambda_{r'}} \left[\zeta^\lambda (rsr')\right] \ot 
v_r^*\wedge v_s^* + \zeta^\lambda (rsr') \ot v_s^*\wedge v_{r'}^* 
 + \zeta^{\lambda_s} \left[\zeta^\lambda (rsr')\right] \ot v_r^*\wedge 
v_{r'}^* \\
   &\hspace{1in} (\lambda_{s'} \equiv_m 0 \text{ for all } s'\not \in 
\{r,r',s\}
\text{ and } \lambda_r+\lambda_{r'}+\lambda_s \equiv_m 0).
\end{aligned}
$$
\end{theorem}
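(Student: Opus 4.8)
The plan is to reduce the statement to an explicit computation with the quantum Koszul complex. By Corollary~\ref{cor: abelian} and the surrounding discussion, the constant Hochschild 2-cocycles representing classes in $\HH^2(S_\mathbf{-1}(V), S_\mathbf{-1}(V)\rtimes G)$ are precisely the elements of $\CC G\ot \Wedge_{\mathbf{-1}}^2(V^*)$ lying in $\Ker d_3^*$ (since the image of $d_2^*$ meets $\CC G\ot\Wedge^2$ trivially). So the entire problem is: for $G=G(m,p,n)$ acting naturally on $V$ with $\mathbf{q}=\mathbf{-1}$, solve the linear system $d_3^*(\alpha)=0$ inside $\CC G\ot\Wedge_{\mathbf{-1}}^2(V^*)$. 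First I would record the explicit form of $d_3^*$ in this case: using the formula displayed at the end of Section~\ref{two resolutions} with all $q_{ij}=-1$ (for $i\neq j$) and $q_{ii}=1$, applied to $g\ot v_j^*\wedge v_k^*$ with $g=\zeta^\lambda\sigma$, one gets for each $i\notin\{j,k\}$ a term proportional to $(\pm v_i \mp \lexp{g}{v_i})g \ot (v^*)^{\wedge(\beta+[i])}$; since $\lexp{\zeta^\lambda\sigma}{v_i} = \zeta^{\lambda_{\sigma(i)}} v_{\sigma(i)}$, the differential is controlled entirely by how $\sigma$ permutes the indices $\{1,\ldots,n\}$ and by the diagonal scalars $\zeta^{\lambda_j}$.

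Next I would organize the cocycle search by the cycle type of the permutation part $\sigma$ and by the support of $\lambda$. Writing $\alpha = \sum_{g}\sum_{r<s}\alpha^g_{rs}\, g\ot v_r^*\wedge v_s^*$, the vanishing of $d_3^*(\alpha)$ on $1^{\ot 2}\ot v_i\wedge v_j\wedge v_k$ produces, for each triple $i<j<k$ and each $g=\zeta^\lambda\sigma$, a relation among the $\alpha^g_{rs}$ and $\alpha^{g'}_{rs}$ for the handful of $g'$ related to $g$ by the action of $g$ on the relevant $v_i$'s. Grouping by $\sigma$: if $\sigma$ moves too many of the indices appearing in a given $v_r^*\wedge v_s^*$, the corresponding coefficient is forced to zero; this is precisely why only $\sigma\in\{1, (rs), (rs)(r's'), (rsr')\}$ survive, and why the support of $\lambda$ is confined to the moved indices (together with the modular conditions $\lambda_r+\lambda_s\equiv 0$, coming from the fixed-point contributions $v_i - \lexp{g}{v_i}$ which vanish iff $\zeta^{\lambda_i}=1$, i.e. $\lambda_i\equiv_m 0$, on the unmoved coordinates). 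The condition $n\geq 4$ enters because it guarantees enough "free" indices $i\notin\{r,s,r',s'\}$ to run the fixed-point argument; for small $n$ there would be extra cocycles. Finally, the group-theoretic constraint $\lambda_1+\cdots+\lambda_n\equiv_p 0$ (membership in $G(m,p,n)$) combines with the support restriction to yield the stated conditions $\lambda_r+\lambda_s\equiv_p 0$ (resp. $\equiv_m 0$ when a fixed-point relation forces more).

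Concretely I would proceed case by case. Case $\sigma=1$: here $d_3^*(\zeta^\lambda \ot v_r^*\wedge v_s^*)$ has only fixed-point terms $(v_i - \zeta^{\lambda_i} v_i)$ for $i\notin\{r,s\}$, so the cocycle condition is $\lambda_i\equiv_m 0$ for all $i\notin\{r,s\}$, giving the first family; $G(m,p,n)$-membership then gives $\lambda_r+\lambda_s\equiv_p 0$. Case $\sigma=(rs)$: now $v_r\leftrightarrow v_s$ get swapped with scalars, $v_i$ fixed for $i\neq r,s$, yielding the second family by the same fixed-point analysis, plus — when the triple $i<j<k$ involves exactly one of $r,s$ — a coupling between $\zeta^\lambda(rs)\ot v_r^*\wedge v_{r'}^*$ and $\zeta^\lambda(rs)\ot v_s^*\wedge v_{r'}^*$ forced to combine in the ratio $\zeta^{\lambda_s}$, producing the third family (and the stronger condition $\lambda_r+\lambda_s\equiv_m 0$, since the coupling relation only closes up mod $m$). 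Cases $\sigma=(rsr')$ and $\sigma=(rs)(r's')$: run the same bookkeeping; the $3$-cycle couples the three wedge-pairs among $\{r,s,r'\}$ into one vector (fifth family), and the double transposition couples the four pairs among $\{r,r',s,s'\}$ (fourth family), with the exponents of $\zeta$ determined by tracking how $g=\zeta^\lambda\sigma$ acts on each $v_t$. For each surviving $\sigma$ one must also check that no \emph{other} coefficients $\alpha^{g'}_{rs}$ with $g'$ having a different permutation part can be nonzero (the permutation part is an invariant of the $d_3^*$-action up to the $\sigma$ in question, so the system block-diagonalizes over conjugacy-type of $\sigma$, though not over conjugacy classes — distinct group elements with the same required $\sigma$-shape but different $\lambda$ are independent). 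The main obstacle I anticipate is the combinatorial case analysis for $\sigma=(rsr')$ and $\sigma=(rs)(r's')$: one must carefully enumerate which triples $i<j<k$ yield nontrivial relations, verify the exponents of $\zeta$ in the linear combinations come out exactly as stated, and confirm completeness — that these combinations span the full kernel and are linearly independent. Everything else is a mechanical, if lengthy, linear-algebra computation driven by the displayed formula for $d_3^*$.
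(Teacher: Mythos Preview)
Your approach is essentially the paper's: both reduce to the single equation $\alpha^g_{jk}(v_i-\lexp{g}{v_i}) + \alpha^g_{ik}(v_j-\lexp{g}{v_j}) + \alpha^g_{ij}(v_k-\lexp{g}{v_k}) = 0$ (the $\mathbf{q}=\mathbf{-1}$ specialization of (\ref{alphagjk})) for each fixed $g=\zeta^\lambda\sigma$ and each triple $i,j,k$, and then run a case analysis on the cycle structure of $\sigma$ relative to the indices where $\alpha^g$ is supported. Two small corrections: the system decouples completely over individual group elements $g$ --- there are no $\alpha^{g'}$ coupled to $\alpha^g$, since $d_3^*(g\ot v_j^*\wedge v_k^*)$ remains supported on $g$ --- so the analysis is simpler than you suggest; and the reduction to $\Ker d_3^*\cap(\CC G\ot\Wedge^2)$ is the discussion at the end of Section~\ref{two resolutions}, not Corollary~\ref{cor: abelian}, which applies only to diagonal actions.
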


\begin{proof}
For each $g\in G$ and $1\leq r <s\leq n$, 
let $\alpha^g_{rs}\in {\mathbb{C}}$ and 
\begin{equation}\label{alpha}
  \alpha = \sum_{g\in G}\ \sum_{1\leq r<s\leq n}  \alpha_{rs}^g g \ot 
v_r^*\wedge v_s^*
\end{equation}
be an arbitrary constant cochain. (That is, $\alpha$ is a cochain whose  
polynomial part has degree 0 
in the factor
$S_{\bf q}(V)\rtimes G$ of the term $(S_{\bf q}(V)\rtimes G) \ot 
\Wedge^2_{{\bf q}^{-1}}(V^*)$
of the complex (\ref{new Hom(resolution) with G}).)
By the analysis in the proof of Theorem \ref{thm: constant},
$d_3^*(\alpha)=0$ if, and only if, equation (\ref{alphagjk}) holds, that 
is, since
$q_{rs} = -1$ for all $r\neq s$, 
\begin{equation}\label{three-terms}
  \alpha_{jk}^g (v_i-\lexp{g}{v_i}) + \alpha^g_{ik} (v_j - \lexp{g}{v_j})
   +\alpha_{ij}^g (v_k-\lexp{g}{v_k}) = 0
\end{equation}
for all $g\in G$ and all $i<j<k$. 
By symmetry, (\ref{three-terms}) holds for all triples $i,j,k$ if and only
if it holds for all $i<j<k$. 
If $\alpha$ is any of the types of cochains in the statement of the 
theorem,
these equations do indeed hold.

Conversely, assume that the equations (\ref{three-terms}) hold for a
cochain $\alpha$ of the form (\ref{alpha}). 
Fix a group element $g=\zeta^{\lambda}\sigma$.
Suppose $\alpha_{ij}^g\neq 0$ for some $i,j$.
If $\sigma =1$ or $\sigma = (ij)$, after reindexing, we obtain the first 
and second listed cocycles.
Now suppose there is some other $k$ for which $\sigma(k)\neq k$.
As a consequence of (\ref{three-terms}) we must have either $\sigma(k) =i$ 
or
$\sigma(k)=j$. Suppose $\sigma(k)=i$ (the other possibility is similar).
Then $\zeta^{\lambda_i} \alpha_{ij}^g = \alpha_{jk}^g$ and
either 
\begin{enumerate}
\item $\sigma(i)=k, \; \zeta^{\lambda_k} \alpha_{jk}^g = \alpha_{ij}^g$, 
and $\alpha^g_{ik}(v_j-\lexp{g}{v_j})=0$ or
\item $\sigma(i)=j, \; \sigma(j)=k, \; \zeta^{\lambda_j} \alpha^g_{jk} = 
\alpha^g_{ik}$,
and $\zeta^{\lambda_k}\alpha^g_{ik} = \alpha^g_{ij}$.
\end{enumerate}

In the second case, $\sigma$ contains the 3-cycle $(ijk)$ in its
disjoint cycle decomposition. We claim that $\sigma = (ijk)$:
If not then there is some other $l$ with $\sigma(l)\neq l$.
Apply (\ref{three-terms}) to $i,j,l$ to obtain
$$
  \alpha_{jl}^g (v_i-\zeta^{\lambda_i}v_j) + 
\alpha^g_{il}(v_j-\zeta^{\lambda_j}v_k)
   +\alpha^g_{ij}(v_l -\lexp{g}{v_l}) =0.
$$
This is not possible since $\alpha^g_{ij}(v_l - \lexp{g}{v_l}) \neq 0$.
Putting all the conditions together and reindexing we obtain the fifth 
listed cocycle.

In the first case, $\sigma$ contains the transposition $(ik)$ in its 
disjoint
cycle decomposition. If $\sigma = (ik)$, then, after
reindexing, we obtain a third type or a second type plus a third type of 
the cocycles listed.
Now suppose there is some other $l$ for which $\sigma(l)\neq l$.
The equation  (\ref{three-terms}) applied to $i,j,l$ becomes
$$
  \alpha^g_{jl}(v_i-\zeta^{\lambda_k}v_k) + 
\alpha^g_{il}(v_j-\lexp{g}{v_j})
  +\alpha^g_{ij}(v_l-\lexp{g}{v_l}) =0.
$$
This forces $\sigma(j)=l, \; \sigma(l)=j, \; \alpha_{jl}^g=0, \; 
\alpha_{ik}^g=0, \; \zeta^{\lambda_l}\alpha_{il}^g=\alpha_{ij}^g$,
and $\zeta^{\lambda_j}\alpha_{ij}^g=\alpha_{il}^g$. 
We now have the product of transpositions $(ik)(jl)$ as part of the
disjoint cycle decomposition of $\sigma$.
We claim that $\sigma=(ik)(jl)$: If there were another $m$ for which
$\sigma(m)\neq m$, applying the above analysis to the triple $i,j,m$
would force $\sigma(j) = m$ as well, a contradiction. 
Applying (\ref{three-terms}) to $i,k,l$ gives
$$
  \alpha^g_{kl}(v_j-\zeta^{\lambda_l}v_l) + 
\alpha^g_{jk}(v_l-\zeta^{\lambda_j}v_j)=0.
$$
This forces $\zeta^{\lambda_j}\alpha_{jk}^g=\alpha_{kl}^g$ and 
$\zeta^{\lambda_l}\alpha_{kl}^g=\alpha_{jk}^g$. 
Putting all the conditions together and reindexing we obtain the fourth 
listed cocycle.
\end{proof}

\begin{lemma}
\label{lemma: image}
Assume that $n \geq 4$. Let $\alpha$ be a 2-cocycle from the list in 
Theorem~\ref{thm: constant cocycles}.
The image of $v_i \ot v_j \; (i \neq j)$ under $(\Theta_2^* \R_2  
\Psi_2^*)(\alpha)$ is zero whenever
\begin{enumerate}
\item[(i)] $\alpha$ is of the first, third, or fourth type and $m \geq 2$, 
or
\item[(ii)] $\alpha$ is of the second or fifth type and $m \geq 3$.
\end{enumerate}
\end{lemma}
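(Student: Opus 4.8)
The plan is to compute $(\Theta_2^*\R_2\Psi_2^*)(\alpha)(v_i\ot v_j)$ directly from formula \eqref{composition}, using the explicit description of $\Psi_2$ on $1\ot v_a\ot v_b\ot 1$ (which is $1\ot 1\ot v_a\wedge v_b$ when $a<b$, zero when $a\geq b$, and $-q_{ab}(1\ot 1\ot v_b\wedge v_a)$ when $a>b$ rewritten via the relation), and the fact that for $G=G(m,p,n)$ acting naturally, each $g=\zeta^\lambda\sigma$ sends $v_k$ to a scalar multiple of $v_{\sigma(k)}$. Thus for a fixed $g$, the term $\alpha(\Psi_2(1\ot \lexp{g^{-1}}{v_i}\ot \lexp{g^{-1}}{v_j}\ot 1))$ is a scalar times $\alpha$ evaluated on a single wedge $v_k^*\wedge v_l^*$ with $\{k,l\}=\{\sigma^{-1}(i),\sigma^{-1}(j)\}$, and $\lexp{g}{}$ acts back. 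The upshot is that $(\Theta_2^*\R_2\Psi_2^*)(\alpha)(v_i\ot v_j)$ is a sum over $G$ of scalars times group elements, each scalar being a product of eigenvalue factors $\zeta$ raised to various $\lambda$-components. The strategy is: first reduce the sum over all of $G$ to a sum over a single conjugacy class (only $g$ whose "$\sigma$-part" matches the support pattern of $\alpha$ contribute), then recognize that the coefficient is a character sum of the form $\sum_{h\in \Stab}\zeta^{(\text{linear functional of }\lambda(h))}$, exactly as in the proof of the theorem following Corollary~\ref{cor: abelian}; this character sum vanishes unless the relevant homomorphism to $\CC^\times$ is trivial, and the constraints defining each type of cocycle in Theorem~\ref{thm: constant cocycles} are designed so that this homomorphism is nontrivial precisely when $m\geq 2$ (types one, three, four) or $m\geq 3$ (types two, five).

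Concretely, I would handle the five types in turn. For the first type, $\alpha=\zeta^\lambda\ot v_r^*\wedge v_s^*$ with $\lambda_{r'}\equiv_m 0$ off $\{r,s\}$ and $\lambda_r+\lambda_s\equiv_p 0$: conjugating $\zeta^\lambda$ by an arbitrary $g=\zeta^\nu\tau$ produces $\zeta^{\tau\cdot\lambda}$, and only $\tau$ fixing $\{r,s\}$ setwise keep the wedge term in the support of $\alpha$ after applying $\Psi_2$; summing over the centralizer contributes a factor $\sum_{\zeta^\nu\tau\in C_G(\zeta^\lambda)}(\text{eigenvalue product})$, and the eigenvalue product on $v_r\wedge v_s$ under $\zeta^\nu$ is $\zeta^{\nu_r+\nu_s}$, which ranges nontrivially over $m$th roots of unity as soon as $m\geq 2$ (since one may take $\nu=([r]-[s])$, say, inside the centralizer provided $n\geq 4$ gives enough room — one checks this lies in $G(m,p,n)$ because the total stays $0$), forcing the sum to vanish. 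The same pattern — isolate the stabilizer, identify the eigenvalue character, exhibit an element of the stabilizer on which it is nontrivial — handles the third and fourth types with $m\geq 2$. For the second and fifth types the relevant cocycles involve a transposition $(rs)$ or $3$-cycle $(rsr')$ in the $\sigma$-part, and the constraint is only $\lambda_r+\lambda_s\equiv_m 0$ (resp. $\lambda_r+\lambda_{r'}+\lambda_s\equiv_m 0$) rather than a mod-$p$ condition; here the $\tau$-part of a conjugating element must normalize the cycle structure, and the surviving eigenvalue character turns out to be trivial when $m=2$ (because $\zeta=-1$ and the transposition already forces the sign bookkeeping to cancel) but nontrivial once $m\geq 3$, whence the vanishing.

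The main obstacle will be the bookkeeping in \eqref{composition} when $\alpha$ is one of the two-term or four-term cocycles (types three through five): applying $\Psi_2$ to $1\ot \lexp{g^{-1}}{v_i}\ot \lexp{g^{-1}}{v_j}\ot 1$ and then re-expanding via $v_a\wedge v_b=-q_{ab}v_b\wedge v_a=v_b\wedge v_a$ (recall $\Wedge_{\bf -1}(V)$ is commutative in the wedge) must be tracked carefully against the several terms of $\alpha$, because cross-cancellations among the terms of $\alpha$ are exactly what make even the potentially-surviving contributions line up into a single character sum. I expect that once the correct stabilizer subgroup is identified for each type, the cancellation is forced by the defining linear relations among the $\lambda$-components, so the heart of the argument is purely the character-sum vanishing lemma already used after Corollary~\ref{cor: abelian}; I would state that lemma once and invoke it five times. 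I would also note explicitly where $n\geq 4$ is used: it guarantees, for each type, the existence of the needed "extra" coordinates outside the support so that the exhibited stabilizer element genuinely lies in $G(m,p,n)$ and the character is genuinely nontrivial.
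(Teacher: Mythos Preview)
Your strategy is the paper's: apply \eqref{composition}, restrict the $G$-sum to those $g=\zeta^\nu\tau$ for which $\Psi_2(1\ot\lexp{g^{-1}}{v_i}\ot\lexp{g^{-1}}{v_j}\ot 1)$ lands in the support of $\alpha$, and then recognize the residual $\nu$-sum as a character sum on (a subgroup of) $(\Z/m\Z)^n$ that vanishes. The paper carries this out in full for the third type, extracting a factor $T_1=\sum_{l_1,l_2\in\Z/m\Z}\zeta^{\lambda_s-l_1-l_2}$ (and a companion $T_2$), which is zero once $m\geq 2$; the role of $n\geq 4$ is exactly what you guessed, namely the existence of an index $l\notin\{i,j,k\}$ used to decouple the congruence constraint $\sum_k\nu_k\equiv_p 0$.

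Two things need repair. First, your witness $\nu=[r]-[s]$ gives $\nu_r+\nu_s=0$, so it does not exhibit nontriviality of the character; take instead $\nu=[r]-[k]$ with $k\notin\{r,s\}$. (Also, the condition on $\tau$ is that it send $\{r,s\}$ to $\{i,j\}$, not fix $\{r,s\}$; the relevant exponent is $\nu_i+\nu_j$.) Second, and this is the genuine gap, your explanation for why types two and five need $m\geq 3$ is incorrect. It is not ``sign bookkeeping'' from $\zeta=-1$. For type two, once you fix the output group element (a conjugate of $\zeta^\lambda(rs)$), the conjugation relation pins down $\nu_i-\nu_j$ but leaves $\nu_i+\nu_j$ free to shift in steps of $2$ (add $1$ to each of $\nu_i,\nu_j$, compensate at an outside index). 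Hence the coefficient of each fixed output element is proportional to $\sum_{l\in\Z/m\Z}\zeta^{2l}$, which vanishes precisely when $m\geq 3$; this is exactly the content of the Remark immediately following the lemma. For types one, three, and four the corresponding exponent is linear in a single free $\nu_k$, giving $\sum_l\zeta^l$ and hence vanishing already for $m\geq 2$. Until you isolate this $\zeta^{2l}$-versus-$\zeta^{l}$ dichotomy, part~(ii) of your argument does not go through.
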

\begin{proof}
Let $\alpha = \zeta^{\lambda_s} \left[\zeta^\lambda (rs)\right] \ot 
v_r^*\wedge v_{r'}^* + \zeta^\lambda (rs)\ot v_s^*\wedge v_{r'}^*$,
the third 2-cocycle from the list in Theorem~\ref{thm: constant cocycles}.
By \eqref{composition}, $|G|$ times $\left[(\Theta_2^* \R_2  
\Psi_2^*)(\alpha)\right](v_i \ot v_j)$ is equal to
\[
\sum_{\tau \in S_n} \sum_{\substack{\nu \in (\Z/m\Z)^n \\ \nu_1 + \cdots + 
\nu_n \equiv_p 0}}
\lexp{\zeta^{\nu}\tau}{\left( \alpha(\Psi_2(1 \ot 
\lexp{{(\zeta^{\nu}\tau)}^{-1}}{v_i} \ot 
\lexp{{(\zeta^{\nu}\tau)}^{-1}}{v_j}\ot 1))\right)}.
\]
Applying  (\ref{psi-two}) and then evaluating $\alpha$, the above 
expression becomes
\[
\sum_{\substack{\nu \in (\Z/m\Z)^n \\ \nu_1 + \cdots + \nu_n \equiv_p 0}} 
\left( 
\sum_{\substack{\tau \in S_n \\ \tau(r)=i \\ \tau(r')=j}} \zeta^{\lambda_s 
- \nu_i - \nu_j} 
\left[\lexp{\zeta^{\nu}\tau}{\left(\zeta^{\lambda}(rs)\right)}\right]
+\sum_{\substack{\tau \in S_n \\ \tau(s)=i \\ \tau(r')=j}} \zeta^{- \nu_i 
- \nu_j} \left[\lexp{\zeta^{\nu}\tau}{\left(\zeta^{\lambda}(rs)
\right)}\right] \right).
\]
Note that the map $\tau \mapsto \tau(rs)$ is a bijection between the 
two sets over which the two inner summations are taken.
Taking this fact and the conjugation action into account, 
the above expression can be rewritten as
\[
\begin{aligned}
&\sum_{\substack{\tau \in S_n \\ \tau(r)=i \\ \tau(r')=j}} 
\sum_{\substack{\nu \in (\Z/m\Z)^n \\ \nu_1 + \cdots + \nu_n \equiv_p 0}}
\left(
\zeta^{\lambda_s - \nu_i - \nu_j} \left[\zeta^{\nu + \tau \cdot \lambda - 
(i\tau(s)) \cdot \nu}(i\tau(s))\right]
+\zeta^{- \nu_i - \nu_j} \left[\zeta^{\nu + \tau (rs) \cdot \lambda - 
(i\tau(s)) \cdot \nu}(i\tau(s))\right] 
\right)\\
=&\sum_{k \not = i,j} \sum_{\substack{\tau \in S_n \\ \tau(r)=i \\ 
\tau(r')=j \\ \tau(s)=k}} 
\sum_{\substack{\nu \in (\Z/m\Z)^n \\ \nu_1 + \cdots + \nu_n \equiv_p 0}}
\left(
\zeta^{\lambda_s - \nu_i - \nu_j} \left[\zeta^{\nu + \tau \cdot \lambda - 
(ik) \cdot \nu}(ik)\right]
+\zeta^{- \nu_i - \nu_j} \left[\zeta^{\nu + \tau (rs) \cdot \lambda - (ik) 
\cdot \nu}(ik)\right] 
\right)\\
=&\sum_{k \not = i,j} \sum_{\substack{\tau \in S_n \\ \tau(r)=i \\ 
\tau(r')=j \\ \tau(s)=k}} 
\sum_{\substack{\nu \in (\Z/m\Z)^n \\ \nu_1 + \cdots + \nu_n \equiv_p 0}}
\left(
\zeta^{\lambda_s - \nu_i - \nu_j} \left[\zeta_i^{\lambda_r + \nu_i - 
\nu_k}\zeta_k^{\lambda_s + \nu_k - \nu_i }(ik)\right]
+\zeta^{- \nu_i - \nu_j} \left[\zeta_i^{\lambda_s + \nu_i - \nu_k 
}\zeta_k^{\lambda_r + \nu_k - \nu_i }(ik)\right] 
\right).
\end{aligned}
\]
Since the inner-most summand does not depend on $\tau$, the above 
expression is proportional to
\[
(*) \qquad \sum_{k \not = i,j}  \sum_{\substack{\nu \in (\Z/m\Z)^n \\ 
\nu_1 + \cdots + \nu_n \equiv_p 0}}
\left(
\zeta^{\lambda_s - \nu_i - \nu_j} \left[\zeta_i^{\lambda_r + \nu_i - 
\nu_k}\zeta_k^{\lambda_s + \nu_k - \nu_i }(ik)\right]
+\zeta^{- \nu_i - \nu_j} \left[\zeta_i^{\lambda_s + \nu_i - \nu_k 
}\zeta_k^{\lambda_r + \nu_k - \nu_i }(ik)\right] 
\right).
\]
Rewriting the second summation in $(*)$ as 
$\ds \sum_{\nu_1, \ldots, \hat{\nu}_l, \ldots, \nu_n \in \Z/m\Z} 
\sum_{\substack{\nu_l \in \Z/m\Z \\ \nu_1 + \cdots + \nu_n \equiv_p 0}}$,
where $l$ is chosen so that $l \neq i,j,k$, and observing that the 
inner-most summand of $(*)$ depends only on 
$\nu_i, \; \nu_j,$ and $\nu_k$, we see that the expression in $(*)$ is 
proportional to
\[
\begin{aligned}
&\sum_{k \not = i,j} \sum_{l_1,l_2,l_3 \in \Z/m\Z}
\left(
\zeta^{\lambda_s - l_1 - l_2} \left[\zeta_i^{\lambda_r + l_1-l_3} 
\zeta_k^{\lambda_s + l_3-l_1} (ik)\right]
+\zeta^{- l_1 - l_2} \left[\zeta_i^{\lambda_s + l_1-l_3} 
\zeta_k^{\lambda_r +l_3-l_1} (ik)\right]
\right)\\
=&\sum_{k \not = i,j} \sum_{l_1,l_2\in \Z/m\Z}
\left(\zeta^{\lambda_s - l_1 - l_2} 
\sum_{l_3 \in \Z/m\Z} \left[ \zeta_i^{\lambda_r + l_3} \zeta_k^{\lambda_s 
-l_3} (ik)\right]
+\zeta^{- l_1 - l_2}\sum_{l_3 \in \Z/m\Z} \left[\zeta_i^{\lambda_s + l_3} 
\zeta_k^{\lambda_r-l_3} (ik)\right] \right)\\
=&\sum_{k \not = i,j} 
\left(
T_1
\sum_{l_3 \in \Z/m\Z} \left[\zeta_i^{\lambda_r + l_3} \zeta_k^{\lambda_s 
-l_3} (ik)\right] 
+ T_2 \sum_{l_3 \in \Z/m\Z} \left[\zeta_i^{\lambda_s +l_3} 
\zeta_k^{\lambda_r-l_3} (ik)\right] \right),
\end{aligned}
\]
where $T_1=\sum_{l_1,l_2\in \Z/m\Z} \zeta^{\lambda_s - l_1 - l_2}$ and
$T_2=\sum_{l_1,l_2\in \Z/m\Z} \zeta^{-l_1 - l_2}$. The above expression is 
zero since both $T_1$ and $T_2$ are 
zero whenever $m \geq 2$.

The proofs for the other 2-cocycles are similar.
\end{proof}

\begin{remark}
The calculations corresponding to the second and fifth 2-cocycles involve 
the sum $\ds \sum_{l \in \Z/m\Z} \zeta^{2l}$,
which is zero if and only if $m \geq 3$, explaining the assumption $m \geq 
3$ in the second part of the previous lemma.
\end{remark}

Combining Theorems \ref{thm: SW2}, \ref{thm: constant}, \ref{thm: constant 
cocycles}, and Lemma \ref{lemma: image}, we obtain:

\begin{theorem}
If $n \geq 4$ and $m \geq 3$, then the vector space of maps $\kappa: V 
\times V \to \CC G(m,p,n)$ for which
$\mH_{{\bf q}, \kappa }$ is a quantum Drinfeld Hecke algebra is trivial, 
and hence there are no nontrivial 
quantum Drinfeld Hecke algebras.  
\end{theorem}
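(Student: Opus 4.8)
The plan is to prove the sharper statement that the only $\kappa$ for which $\mH_{\mathbf{q},\kappa}$ is a quantum Drinfeld Hecke algebra is $\kappa=0$; the assertion about the vector space of such $\kappa$ then follows, since by Theorem~\ref{thm: constant} (which applies because $\Wedge_{\mathbf{-1}}(V)$ is commutative and the natural $G$-action extends to it) the admissible $\kappa$'s are exactly the ones attached to constant Hochschild 2-cocycles on $S_\mathbf{-1}(V)\rtimes G$, and we will have shown these all vanish. So suppose $\mH_{\mathbf{q},\kappa}$ is a quantum Drinfeld Hecke algebra for $G=G(m,p,n)$ with $n\geq 4$, $m\geq 3$ and $q_{ij}=-1$ for $i\neq j$. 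By Theorem~\ref{main-thm} it is the $t=1$ specialization of a deformation of $S_\mathbf{-1}(V)\rtimes G$ with $\deg\mu_i=-2i$, so $\mu_1$ is a constant Hochschild 2-cocycle and, by the displayed formula at the end of Section~\ref{QDHAs and deformations},
\[
  \sum_{g\in G}\kappa_g(v_j,v_i)\,g \;=\; \mu_1(v_j\ot v_i)-q_{ji}\,\mu_1(v_i\ot v_j)
  \qquad\text{for all } i,j.
\]
Everything therefore reduces to showing this antisymmetrized combination vanishes for every constant Hochschild 2-cocycle.

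Next I would make the constant cocycles explicit. By Theorem~\ref{thm: SW2} the class of $\mu_1$ in $\HH^2(S_\mathbf{-1}(V)\rtimes G)$ is $\Theta_2^*\R_2\Psi_2^*(\alpha)$ for some $\alpha\in\bigoplus_{g\in G}\HH^2(S_\mathbf{-1}(V),S_\mathbf{-1}(V)_g)$, and since $\mu_1$ is constant we may take $\alpha$ to be a $d_3^*$-closed element of $\CC G\ot\Wedge^2_{\mathbf{-1}}(V^*)$; by Theorem~\ref{thm: constant cocycles} such an $\alpha$ is a $\CC$-linear combination of the five displayed families of cocycles. Replacing $\mu_1$ by the representative $\Theta_2^*\R_2\Psi_2^*(\alpha)$ of its class alters it by a Hochschild coboundary, but this is harmless: a degree-$(-2)$ coboundary has the form $(v_i\ot v_j)\mapsto-\psi(v_iv_j)$ for some linear $\psi$, and the relation $v_jv_i=q_{ji}v_iv_j$ makes the $\psi$-terms cancel in $\mu_1(v_j\ot v_i)-q_{ji}\mu_1(v_i\ot v_j)$. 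It thus suffices to evaluate $\Theta_2^*\R_2\Psi_2^*(\alpha)$ on $V\times V$, and by linearity one may do so family by family.

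This is precisely Lemma~\ref{lemma: image}: for $n\geq 4$ the image of $v_i\ot v_j$ ($i\neq j$) under $\Theta_2^*\R_2\Psi_2^*$ is $0$ for the first, third and fourth families once $m\geq 2$ and for the second and fifth families once $m\geq 3$ --- this is where the hypothesis $m\geq 3$ is used --- while the value on $v_i\ot v_i$ is automatically $0$ since $\Psi_2(1\ot v_i\ot v_i\ot 1)=0$. Hence $\Theta_2^*\R_2\Psi_2^*(\alpha)$ vanishes identically on $V\times V$, so $\kappa_g=0$ for every $g$, i.e.\ $\kappa=0$ and $\mH_{\mathbf{q},\kappa}\cong S_\mathbf{-1}(V)\rtimes G$ is the trivial quantum Drinfeld Hecke algebra; combined with the first paragraph, the vector space of admissible $\kappa$ is $\{0\}$.

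I do not expect a genuine obstacle here: the two computationally substantial inputs --- Theorem~\ref{thm: constant cocycles}, with its case analysis on the disjoint-cycle type of the permutation part of each $g$, and Lemma~\ref{lemma: image}, with its evaluation of the exponential sums $\sum_{\ell\in\Z/m\Z}\zeta^{c\ell}$ --- are already in hand, so the theorem is a formal assembly of Theorems~\ref{thm: SW2}, \ref{thm: constant}, \ref{thm: constant cocycles} and Lemma~\ref{lemma: image}. The one point needing care is the passage between a constant 2-cocycle regarded as a $d_3^*$-closed Koszul cochain and as a bilinear map on $(S_\mathbf{-1}(V)\rtimes G)^{\ot 2}$, together with the observation recorded above that the coboundary ambiguity in that passage is invisible to $\kappa$.
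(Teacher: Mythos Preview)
Your proposal is correct and follows essentially the same route as the paper, which simply records the theorem as the combination of Theorems~\ref{thm: SW2}, \ref{thm: constant}, \ref{thm: constant cocycles} and Lemma~\ref{lemma: image}. You are more explicit than the paper about one point that deserves mention: when you replace the constant bar cocycle $\mu_1$ by the Koszul-side representative $\Theta_2^*\R_2\Psi_2^*(\alpha)$ of its cohomology class, you observe that the resulting degree-$(-2)$ coboundary ambiguity has the form $(v_i\otimes v_j)\mapsto -\psi(v_iv_j)$ and hence cancels in the antisymmetrized expression defining $\kappa$. The paper leaves this passage implicit; your argument makes it precise and is otherwise identical in strategy.
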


\begin{remark}
The analogue of the above theorem for the case ${\mathbf q}={\mathbf 1}$ 
(that is, $q_{ij}=1$ for all $i,j$)
 was proved by Ram and Shepler \cite{RS}.
\end{remark}

Next, we consider the special cases $G(1,1,n)$, $G(2,1,n)$, and 
$G(2,2,n)$. In all three
cases we assume that $n \geq 4$. In these cases, there do exist nontrivial 
quantum Drinfeld Hecke algebras and
in what follows we classify them.

\medskip
\noindent {\bf The symmetric group $G(1,1,n)=S_n$}
\medskip

In this case, Theorem~\ref{thm: constant cocycles} reduces to the 
following:

\begin{theorem}
\label{thm: constant cocycles m=1}
Assume that $n \geq 4$. The constant Hochschild 2-cocycles 
representing elements in $\HH^2(S_\mathbf{-1}(V), S_\mathbf{-1}(V)\rtimes 
S_n)$
form a vector space with basis all
$$
\begin{aligned}
 & 1\ot v_r^*\wedge v_s^*,\\
 & (rs)\ot v_r^*\wedge v_s^*, \\
 & (rs)\ot (v_r^*\wedge v_{r'}^* + v_s^*\wedge v_{r'}^*),\\
 & (rs)(r's') \ot (v_r^*\wedge v_{r'}^* + v_r^*\wedge v_{s'}^* + 
v_s^*\wedge v_{r'}^* 
               + v_s^*\wedge v_{s'}^*),\\
 & (rs{r'})\ot (v_r^*\wedge v_s^* + v_s^*\wedge v_{r'}^* + v_r^*\wedge 
v_{r'}^*).
\end{aligned}
$$
\end{theorem}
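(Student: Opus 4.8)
The plan is to obtain Theorem~\ref{thm: constant cocycles m=1} as the $m=1$ specialization of Theorem~\ref{thm: constant cocycles}. Since $p$ divides $m$, taking $m=1$ forces $p=1$, so $G(1,1,n)=S_n$ and $S_\mathbf{-1}(V)\rtimes G(1,1,n)=S_\mathbf{-1}(V)\rtimes S_n$; the two Hochschild cohomology spaces under discussion literally coincide, so it suffices to simplify the description of the basis. No new homological input is needed — all the work is already in Theorem~\ref{thm: constant cocycles}.

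First I would record that the primitive first root of unity is $\zeta=1$, so that $\zeta^\lambda=1$, $\zeta^{\lambda_s}=1$, and $\zeta^{\lambda_{r'}}=1$ for every exponent tuple $\lambda$ appearing in the five families. Next I would observe that every side condition in Theorem~\ref{thm: constant cocycles} of the form $\lambda_t\equiv_m 0$, $\lambda_r+\lambda_s\equiv_p 0$, $\lambda_r+\lambda_s\equiv_m 0$, or $\lambda_{r'}+\lambda_{s'}\equiv_m 0$ is vacuous, since every integer is $\equiv_1 0$. Substituting these into the five families: the first two collapse to $1\ot v_r^*\wedge v_s^*$ and $(rs)\ot v_r^*\wedge v_s^*$, one for each pair $r<s$; the third becomes $(rs)\ot(v_r^*\wedge v_{r'}^*+v_s^*\wedge v_{r'}^*)$; the fourth becomes $(rs)(r's')\ot(v_r^*\wedge v_{r'}^*+v_r^*\wedge v_{s'}^*+v_s^*\wedge v_{r'}^*+v_s^*\wedge v_{s'}^*)$; and the fifth becomes $(rs{r'})\ot(v_r^*\wedge v_s^*+v_s^*\wedge v_{r'}^*+v_r^*\wedge v_{r'}^*)$, which is exactly the list claimed.

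The only point that warrants a remark is that this substitution neither merges nor deletes basis vectors: once $\zeta=1$, the distinct group elements in the general list are distinguished purely by their underlying permutations, so distinct elements of the $m$-parametrized basis remain distinct, and the linear independence asserted in Theorem~\ref{thm: constant cocycles} is inherited verbatim. I expect no genuine obstacle here; the statement is simply the $m=1$ shadow of Theorem~\ref{thm: constant cocycles}, recorded separately because the symmetric group is the case analyzed in detail in the remainder of this section.
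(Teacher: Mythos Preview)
Your proposal is correct and matches the paper's own treatment: the paper does not give a separate proof of Theorem~\ref{thm: constant cocycles m=1} but simply introduces it with ``In this case, Theorem~\ref{thm: constant cocycles} reduces to the following,'' i.e., exactly the $m=p=1$ specialization you describe. One small clarification on your final remark: when $m=1$ the tuple $\lambda$ ranges over the one-element group $(\Z/1\Z)^n$, so there is nothing to merge in the first place---each family is already parametrized solely by the index data $r,s,r',s'$.
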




Combining Theorems \ref{thm: SW2}, \ref{thm: constant}, and \ref{thm: 
constant cocycles m=1}, we obtain:

\begin{theorem}
Assume that $n \geq 4$.
The maps $\kappa:V \times V \to \CC S_n$ for which $\mH_{{\bf q}, \kappa 
}$ is a quantum Drinfeld Hecke algebra form
a five-dimensional vector space with basis consisting of maps $V \times V 
\to \CC S_n$ determined by their 
effect on pairs $(v_i,v_j)$, $i \neq j$, given by
\[
\begin{aligned}
&  (v_i,v_j) \mapsto 1,\\
&  (v_i,v_j) \mapsto (ij),\\
&  (v_i,v_j) \mapsto \sum_{k\neq i,j} ((ik) + (jk)),\\
&  (v_i,v_j) \mapsto \sum_{k,l\not\in\{i,j\}} (ik)(jl),\\
&  (v_i,v_j) \mapsto \sum_{k\neq i,j} ((ijk) + (ikj)).
\end{aligned}
\]
\end{theorem}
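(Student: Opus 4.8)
The plan is to apply the composition $\Theta_2^* \R_2 \Psi_2^*$ from Theorem~\ref{thm: SW2} to each of the five basis cocycles listed in Theorem~\ref{thm: constant cocycles m=1}, using the explicit formula \eqref{composition}, and then read off the resulting bilinear map $\kappa: V\times V\to \CC S_n$ via the relation $\sum_g\kappa_g(v_j,v_i)g = \mu_1(v_j\ot v_i) - q_{ji}\mu_1(v_i\ot v_j)$ established after the proof of Theorem~\ref{main-thm}. By Theorem~\ref{thm: constant}, since $G=S_n$ acts on $\Wedge_{\bf -1}(V)$ by automorphisms (the quantum exterior algebra being commutative here), every such constant cocycle automatically yields a quantum Drinfeld Hecke algebra, so the only thing to verify is that the images of the five cocycles are exactly the five maps in the statement, and that they remain linearly independent after applying $\Theta_2^* \R_2 \Psi_2^*$ — which follows because Theorem~\ref{thm: SW2} says this composition is an isomorphism onto $\HH^2(S_n\ltimes S_{\bf -1}(V))$.

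First I would fix a cocycle $\alpha$ of one of the five types and compute $[\Theta_2^* \R_2 \Psi_2^*(\alpha)](v_i\ot v_j)$ using \eqref{composition}: this requires evaluating $\alpha$ on $\Psi_2(1\ot \lexp{g^{-1}}{v_i}\ot\lexp{g^{-1}}{v_j}\ot 1)$ for all $g\in S_n$ and then averaging with the conjugation action $\lexp{g}{(-)}$. Since the group acts by signed permutation matrices with entries $0,\pm1$ and here all $q_{ij}=-1$, the chain map $\Psi_2$ of \eqref{psi-two} together with the relation $v_a\wedge v_b = v_b\wedge v_a$ makes these evaluations purely combinatorial: $\Psi_2(1\ot v_a\ot v_b\ot 1)$ is $\pm 1\ot 1\ot v_{\min}\wedge v_{\max}$ when $a\neq b$ and $0$ when $a=b$. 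The sum over $g\in S_n$ then collapses to a sum over the cosets of the relevant centralizer: for the first two types the answer is immediate (the identity cocycle gives $(v_i,v_j)\mapsto 1$, and $(rs)\ot v_r^*\wedge v_s^*$ gives $(v_i,v_j)\mapsto (ij)$ up to a nonzero scalar coming from $|C_{S_n}((ij))|/|S_n|$), while for the third, fourth, and fifth types the coset sum produces precisely the sums $\sum_{k\neq i,j}((ik)+(jk))$, $\sum_{k,l\notin\{i,j\}}(ik)(jl)$, and $\sum_{k\neq i,j}((ijk)+(ikj))$ respectively, again up to harmless nonzero normalization constants that can be absorbed into the choice of basis.

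The main obstacle I anticipate is bookkeeping rather than conceptual: correctly tracking which permutations $g$ contribute a nonzero term (those for which $g^{-1}(i)$ and $g^{-1}(j)$ land in the support of the partial permutation defining $\alpha$), and verifying that after the conjugation action $\lexp{g}{(-)}$ the group elements appearing in the support reorganize into exactly the stated sums over $k$ (and over $k,l$). One has to be careful that the third and fifth types, which are \emph{sums} of two or three terms of the form $g'\ot v_a^*\wedge v_b^*$, do not produce cross-terms or cancellations; checking this amounts to noting that the two (resp. three) group elements in the support are conjugate and that the evaluation picks out complementary index pairs, so the averaged image is a single orbit sum. Once the five images are computed, linear independence and the fact that these are \emph{all} such maps both follow formally from Theorems~\ref{thm: SW2}, \ref{thm: constant}, and \ref{thm: constant cocycles m=1}, completing the proof.
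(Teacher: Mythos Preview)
Your approach is correct and is exactly what the paper does: it simply states that the result follows by combining Theorems~\ref{thm: SW2}, \ref{thm: constant}, and \ref{thm: constant cocycles m=1}, leaving the explicit evaluation of $\Theta_2^* \R_2 \Psi_2^*$ on a representative of each of the five $S_n$-orbits of cocycles to the reader. Two small slips to fix when you carry out the computation: $S_n$ acts here by ordinary (not signed) permutation matrices, and with the paper's asymmetric choice \eqref{psi-two} one has $\Psi_2(1\ot v_a\ot v_b\ot 1)=0$ for $a\geq b$ rather than $\pm\, 1\ot 1\ot v_{\min}\wedge v_{\max}$.
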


\medskip
\noindent {\bf The hyperoctahedral group $G(2,1,n)=WB_n$}
\medskip

We know from Lemma~\ref{lemma: image} that if $\alpha$ is a first, third, 
or fourth type of $2$-cocycle in 
the list in Theorem~\ref{thm: constant cocycles}, then the image of $v_i 
\ot v_j \; (i \neq j)$ under 
$\left[(\Theta_2^* \R_2  \Psi_2^*)(\alpha)\right]$ is zero. For the second 
and fifth type of $2$-cocycles, we have
the following:

\begin{lemma}\label{m2}
If $\alpha$ is a second or fifth type of $2$-cocycle, then the 
corresponding image of $v_i \ot v_j \; (i \neq j)$ 
under $\left[(\Theta_2^* \R_2  \Psi_2^*)(\alpha)\right]$ is, respectively, 
proportional to
\[
\begin{aligned}
 & 
\begin{cases}
(ij)-\zeta_i\zeta_j(ij) & \text{if } \lambda_r, \lambda_s \text{ have the 
same parity }\\
\zeta_i(ij)-\zeta_j(ij) & \text{if } \lambda_r, \lambda_s \text{ have 
different parity },
\end{cases}  \\ 
 & 
\ds \sum_{k \not = i,j} 
(
2(ijk) - 2\zeta_i\zeta_j(ijk) - 2\zeta_j\zeta_k(ijk) + 
2\zeta_i\zeta_k(ijk) 
+ (ikj) - \zeta_i\zeta_j(ikj) + \zeta_j\zeta_k(ikj) - \zeta_i\zeta_k(ikj)
).
\end{aligned}
\]
\end{lemma}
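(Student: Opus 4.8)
The statement to be proved (Lemma~\ref{m2}) is a direct computation with the formula \eqref{composition}, entirely parallel to the proof of Lemma~\ref{lemma: image}, but now with $m=2$ so that the Gauss-sum factors $T_1,T_2$ need no longer vanish. First I would set $m=2$, so $\zeta=-1$, and fix a second-type cocycle $\alpha = \zeta^\lambda(rs)\ot v_r^*\wedge v_s^*$ with $\lambda_{r'}\equiv_2 0$ for $r'\notin\{r,s\}$ and $\lambda_r+\lambda_s\equiv_p 0$. Apply \eqref{composition}: $|G|\cdot[(\Theta_2^*\R_2\Psi_2^*)(\alpha)](v_i\ot v_j)$ is a sum over $\zeta^\nu\tau\in G(2,1,n)$ of $\lexp{\zeta^\nu\tau}{(\alpha(\Psi_2(1\ot \lexp{(\zeta^\nu\tau)^{-1}}{v_i}\ot \lexp{(\zeta^\nu\tau)^{-1}}{v_j}\ot 1)))}$. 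Using \eqref{psi-two}, only those $\tau$ with $\tau(r)=i$, $\tau(s)=j$ (for the asymmetric choice, with the appropriate ordering) contribute, and evaluating $\alpha$ picks up a scalar $\zeta^{\pm\nu_i\mp\nu_j}$; the conjugation action then moves $\zeta^\lambda(rs)$ to $\zeta_i^{\lambda_r}\zeta_j^{\lambda_s}(ij)$ twisted by $\nu$. As in Lemma~\ref{lemma: image} the summand becomes independent of $\tau$, so the whole expression is proportional to a sum over $\nu$ with $\nu_1+\cdots+\nu_n\equiv_p 0$.

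The second step is to carry out the $\nu$-summation. Splitting off the free coordinates $\nu_l$ ($l\neq i,j$) exactly as in Lemma~\ref{lemma: image}, the surviving dependence is on $\nu_i,\nu_j$ only, and one is left with $\sum_{l_1,l_2\in\Z/2\Z}\zeta^{c-l_1-l_2}\,\zeta_i^{\lambda_r+l_1}\zeta_j^{\lambda_s+l_2}(ij)$ for suitable constant $c$. Since $m=2$, $\sum_{l_1}\zeta^{-l_1}\zeta_i^{l_1}=1+(-1)\zeta_i=1-\zeta_i$, and similarly in $\nu_j$; the constant $c$ contributes an overall sign, and one collects the four terms $\pm\zeta_i^{\lambda_r}\zeta_j^{\lambda_s}(1-\zeta_i)(1-\zeta_j)(ij)$. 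Expanding and distinguishing whether $\zeta_i^{\lambda_r}\zeta_j^{\lambda_s}$ equals $1$, $\zeta_i$, $\zeta_j$, or $\zeta_i\zeta_j$ — equivalently, whether $\lambda_r,\lambda_s$ have the same or different parity — yields exactly $(ij)-\zeta_i\zeta_j(ij)$ in the same-parity case and $\zeta_i(ij)-\zeta_j(ij)$ in the different-parity case, up to the stated proportionality constant. (The $(rs)(r's')$ and $(rsr')$-type cocycles of Theorem~\ref{thm: constant cocycles} reduce to the transposition and $3$-cycle cases when $m=2$; I would note this so that only two families need be handled.)

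The third step is the fifth-type cocycle $\zeta^{\lambda_s+\lambda_{r'}}[\zeta^\lambda(rsr')]\ot v_r^*\wedge v_s^* + \zeta^\lambda(rsr')\ot v_s^*\wedge v_{r'}^* + \zeta^{\lambda_s}[\zeta^\lambda(rsr')]\ot v_r^*\wedge v_{r'}^*$, with $m=2$. Here \eqref{composition} gives a sum of three pieces, indexed by $\tau$ sending $(r,s,r')$ to $(i,j,k)$, $(j,k,i)$, $(k,i,j)$ respectively for each $k\neq i,j$; after the conjugation action each piece is a twist of $\zeta^{\cdots}(ijk)$ by $\nu$, the summand becomes $\tau$-independent, and the $\nu$-sum over $\Z/2\Z$-coordinates again collapses to products $(1-\zeta_a)(1-\zeta_b)$. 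The main bookkeeping obstacle — and the only real work — is keeping the three contributions and their signs straight through the conjugation and through the parity split; a $3$-cycle $(ijk)$ and its inverse $(ikj)$ arise depending on which of the two orderings of a pair is $<$, which is why the final answer separates into the $(ijk)$-part with coefficient $2$ and the $(ikj)$-part with coefficient $1$ (reflecting the asymmetric choice of $\Psi_2$). Assembling the four monomials for each $k$ and each of the four parity patterns of $(\lambda_r,\lambda_s,\lambda_{r'})$ gives the displayed sum $\sum_{k\neq i,j}\bigl(2(ijk)-2\zeta_i\zeta_j(ijk)-2\zeta_j\zeta_k(ijk)+2\zeta_i\zeta_k(ijk)+(ikj)-\zeta_i\zeta_j(ikj)+\zeta_j\zeta_k(ikj)-\zeta_i\zeta_k(ikj)\bigr)$, completing the proof.
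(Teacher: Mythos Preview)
Your approach is the right one and matches what the paper intends: the paper does not actually prove this lemma, but the Remark following Lemma~\ref{lemma: image} makes clear that the computation follows that same template, with the key sum $\sum_{l}\zeta^{2l}$ now nonzero since $m=2$. However, there is a computational slip in your second step that, if carried through, produces the wrong answer.

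When you conjugate $\zeta^\lambda(rs)$ by $\zeta^\nu\tau$ (with $\tau(r)=i$, $\tau(s)=j$), the $\nu$-twist is $\nu - (ij)\cdot\nu$, whose $i$-component is $\nu_i-\nu_j$ and whose $j$-component is $\nu_j-\nu_i$. Hence, writing $l_1=\nu_i$, $l_2=\nu_j$, the summand (including the scalar $\zeta^{-\nu_i-\nu_j}$ coming from $\Psi_2$) is
\[
\zeta^{-l_1-l_2}\,\zeta_i^{\lambda_r+l_1-l_2}\zeta_j^{\lambda_s+l_2-l_1}(ij),
\]
not $\zeta^{c-l_1-l_2}\zeta_i^{\lambda_r+l_1}\zeta_j^{\lambda_s+l_2}(ij)$ as you write. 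In particular the sum over $l_1,l_2$ does \emph{not} factor as $(1-\zeta_i)(1-\zeta_j)$; with your formula the even--even case would give $1-\zeta_i-\zeta_j+\zeta_i\zeta_j$, which is not proportional to $1-\zeta_i\zeta_j$. The correct observation is that the group-element part depends only on $u:=l_1-l_2$, and for fixed $u$ the sum over the remaining variable contributes $\sum_{l}\zeta^{-2l}$ --- precisely the sum flagged in the paper's Remark. Since $m=2$ this equals $2$, and the residual sum over $u\in\{0,1\}$ gives
\[
2\bigl(\zeta_i^{\lambda_r}\zeta_j^{\lambda_s} - \zeta_i^{\lambda_r+1}\zeta_j^{\lambda_s+1}\bigr)(ij),
\]
from which the parity split is immediate and yields exactly the stated dichotomy. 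The same cross-term structure appears in the fifth-type computation (the conjugation twist by a $3$-cycle again produces differences $\nu_a-\nu_b$ in the exponents), so the $(1-\zeta_a)(1-\zeta_b)$ factorization you invoke there is likewise incorrect; the reduction again goes through $\sum_l\zeta^{2l}$ and a single residual parameter, after which the bookkeeping you describe does go through.
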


Combining Theorems \ref{thm: SW2}, \ref{thm: constant}, \ref{thm: constant 
cocycles}, and Lemma~\ref{m2}, we obtain:

\begin{theorem} \label{m2 classification}
Assume that $n \geq 4$.
The maps $\kappa:V \times V \to \CC G(2,1,n)$ for which $\mH_{{\bf q}, 
\kappa }$ is a quantum Drinfeld Hecke algebra form
a two-dimensional vector space with basis consisting of maps $V \times V 
\to \CC G(2,1,n)$ determined by their 
effect on pairs $(v_i,v_j)$, $i \neq j$, given by
\[
\begin{aligned}
&  (v_i,v_j) \mapsto (ij)-\zeta_i\zeta_j(ij),\\
&  (v_i,v_j) \mapsto \ds \sum_{k \not = i,j} 
((ijk) - \zeta_i\zeta_j(ijk) - \zeta_j\zeta_k(ijk) + \zeta_i\zeta_k(ijk) \\
& \hspace{1.5in} + (ikj) - \zeta_i\zeta_j(ikj) + \zeta_j\zeta_k(ikj) - 
\zeta_i\zeta_k(ikj)).
\end{aligned}
\]
\end{theorem}

\medskip
\noindent {\bf The type $D_n$ Weyl group $G(2,2,n)=WD_n$}
\medskip

As was the case for $G(2,1,n)$, we do not get nontrivial quantum Drinfeld 
Hecke algebras from
the first, third, or fourth type of $2$-cocycles,
but there do exist nontrivial quantum Drinfeld 
Hecke algebras for the second and fifth 
type of $2$-cocycles.
The corresponding result for this case is simply 
Theorem~\ref{m2 classification} with 
$G(2,1,n)$ replaced by $G(2,2,n)$.

\end{section}
\begin{section}{Complex reflection groups: Symplectic representations} 
\label{symplectic}

In this section, we classify the quantum Drinfeld Hecke algebras for 
certain symplectic representations of 
the complex reflection groups $G(m,1,n), \, m \text{ even}, \, \, n \geq 
3$. We also explain how our classification
includes all of the braided Cherednik algebras of Bazlov and Berenstein 
\cite{BB}.

Let $m$ be an {\em even} positive integer, let $G$ denote the complex 
reflection group $G(m,1,n)$, and let $\zeta$ denote a 
primitive $m$th root of unity.
Let $U$ be a vector space of dimension $n$, and let $x_1,\ldots x_n$ be an 
ordered basis for $U$. Denote by $y_1, \ldots, y_n$ the basis for $U^*$ 
dual to $\{x_i\}$, so that 
$U^*=\span_{\CC}\{y_1,\ldots y_n\}$.
Set $v_i := x_i, v_{n+i} := y_i, 1 \leq i \leq n$, and $V:=U \oplus U^*$, 
so that $\dim V = 2n$ and $v_1, \ldots, v_{2n}$ is an ordered basis for 
$V$.

Consider the natural action of $G$ on $U$. 
The dual action of $G$ on $U^*$ is determined by
\[  
\lexp{g}{y_i} = \varepsilon^{-1} y_j \text{ if and only if } \lexp{g}{x_i} 
= \varepsilon x_j.
\] 
Thus, we have an action of $G$ on $V:=U \oplus U^*$, and this action 
induces an action of $G$
on  $S_{\minusone}(V)$ by algebra automorphisms, where  $\minusone$ is 
defined below.

For each pair $i,j$ of elements in $\{1,\ldots,2n\}$, let $q_{ij}$ be a 
scalar defined as follows:
$$
q_{ij} := 
\begin{cases}
1, & \text{if~} i \equiv j\mod n \\
-1, & \text{otherwise}.
\end{cases}
$$
Denote by $\minusone$ the tuple of scalars $(q_{ij})$.

Let $\C := \{\zeta^{l} \mid l \in \Z/m\Z\}$. Since $|\C|=m$ is even,  $-1$
is an element of $\C$.
For each $\varepsilon \in \C$ and $i,j \in \{1, \ldots n\}, i \not = j$,
define $\sigma_{ij}^{(\varepsilon)} \in \GL(U)$ by 
$$
\sigma_{ij}^{(\varepsilon)}(x_k) := 
\begin{cases}
x_k & \text{if~} k \not \in \{i,j\} \\
\varepsilon x_j & \text{if~} k=i \\
-\varepsilon^{-1} x_i & \text{if~} k=j.
\end{cases}
$$
That is, $\sigma_{ij}^{(\varepsilon)} = (-\zeta_i^{-l}) \zeta_j^l (ij) \in 
G(m,1,n)$,
where $\varepsilon = \zeta^l$.

For each $\varepsilon \in \C$ and $i \in \{1, \ldots n\}$,
define $t_i^{(\varepsilon)} \in \GL(U)$ by 
$$
t_i^{(\varepsilon)}(x_k) := 
\begin{cases}
x_k & \text{if~} k \not = i \\
\varepsilon x_i & \text{if~} k=i.
\end{cases}
$$
That is, $t_i^{(\varepsilon)} = \zeta_i^{l} \in G(m,1,n)$,
where $\varepsilon = \zeta^l$.

\begin{theorem}
\label{thm: constant cocycles'}
Suppose $m$ is even and $n \geq 3$. The constant Hochschild 2-cocycles 
representing elements in $\HH^2(S_\mathbf{-1}(V), S_\mathbf{-1}(V)\rtimes 
G)$
form a vector space with basis all
$$
\begin{aligned}
 & t_r^{(-1)}t_s^{(-1)}\ot x_r^*\wedge x_s^*,\\
 & t_r^{(-1)}t_s^{(-1)}\ot y_r^*\wedge y_s^*,\\
 & t_r^{(-1)}t_s^{(-1)}\ot x_r^*\wedge y_s^* \qquad (r \neq s),\\
 & t_r^{(\varepsilon)}\ot x_r^*\wedge y_r^*,\\
 & \sigma_{rs}^{(\xi)} \ot x_r^* \wedge y_r^* + \sigma_{rs}^{(\xi)} \ot 
x_s^* \wedge y_s^*
   + \xi \sigma_{rs}^{(\xi)} \ot x_r^* \wedge y_s^* -  \xi^{-1} 
\sigma_{rs}^{(\xi)} \ot x_s^* \wedge y_r^* 
   \qquad (r \neq s).\\
\end{aligned}
$$
\end{theorem}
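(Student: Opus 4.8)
The plan is to compute $\Ker d_3^*$ inside $\CC G \ot \Wedge^2_{\bf -1}(V^*)$ explicitly, exactly as in the proof of Theorem~\ref{thm: constant cocycles}, and then observe that the answer is exactly the claimed list. Writing a general constant cochain as $\alpha = \sum_{g\in G}\sum_{r<s}\alpha^g_{rs}\, g\ot v_r^*\wedge v_s^*$ (with the $v_i$ now running over $x_1,\dots,x_n,y_1,\dots,y_n$), the vanishing $d_3^*(\alpha)=0$ is again equation (\ref{alphagjk}), and since here $q_{ij}\in\{1,-1\}$, the condition splits into finitely many cases depending on how $q_{ik}q_{jk}$ and the diagonal/permutation parts of $g$ interact. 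The first step is therefore to fix $g = \zeta^\lambda\sigma\in G(m,1,n)$, suppose some $\alpha^g_{rs}\neq 0$, and run the same combinatorial case analysis on the cycle type of $\sigma$ restricted to the indices that actually appear — but now carefully tracking the signs introduced because $q_{ij}=-1$ precisely when $i,j$ lie in different ``halves'' ($i\le n < j$ or vice versa).

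The key steps, in order: (1) rewrite (\ref{alphagjk}) with the explicit values of $q_{ij}$ for the representation $V = U\oplus U^*$, noting that a triple $\{i,j,k\}$ contributes a sign twist exactly according to how many of the three indices lie in $\{1,\dots,n\}$; (2) for each $g$, use that $G$ acts by \emph{monomial} matrices, so $\lexp{g}{v_i}$ is a scalar times a single basis vector, and deduce from (\ref{alphagjk}) that the support of $\sigma$ (on indices where $\alpha^g$ is nonzero) must be a single transposition, a product of two disjoint transpositions, or a single 3-cycle, together with the constraint that $\lambda$ vanishes off this support — this mirrors the proof of Theorem~\ref{thm: constant cocycles} line for line; (3) translate the surviving scalar constraints on $\lambda$ and on the matrix entries into the statement that $g$ must be (up to reindexing) one of $t_r^{(-1)}t_s^{(-1)}$, $t_r^{(\varepsilon)}$, or $\sigma_{rs}^{(\xi)}$, and that the corresponding $\alpha^g_{rs}$ are forced to be the coefficients displayed in the five listed cocycles; (4) conversely, verify directly that each of the five listed cochains satisfies (\ref{alphagjk}) — a routine check. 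Since the action of $G$ extends to $\Wedge_{\bf -1}(V)$ (the relevant quantum exterior algebra is commutative, so this is automatic, and Lemma~\ref{two-actions} applies vacuously), the notion of ``constant cocycle'' is well defined and Theorem~\ref{thm: constant} guarantees every such cocycle is a genuine obstruction-free class; so the cohomology computation reduces entirely to the cocycle computation, with no coboundary to quotient by (the intersection of $\Im d_2^*$ with $\CC G\ot\Wedge^2_{{\bf q}^{-1}}(V^*)$ is $0$, as already noted in Section~\ref{two resolutions}).

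The main obstacle I expect is \textbf{bookkeeping with the two halves of $V$}: unlike in Theorem~\ref{thm: constant cocycles} where every off-diagonal $q_{ij}=-1$, here $q_{x_ix_j}=q_{y_iy_j}=-1$ but $q_{x_iy_i}=+1$, so the sign pattern in (\ref{alphagjk}) genuinely depends on whether the indices of a cocycle term lie in the same half or in opposite halves. This is why the first two listed cocycles involve $x_r^*\wedge x_s^*$ and $y_r^*\wedge y_s^*$ with \emph{no} sign, the third involves the mixed term $x_r^*\wedge y_s^*$ with $r\neq s$, the fourth the diagonal symplectic pair $x_r^*\wedge y_r^*$ (which carries $q_{rr}=1$), and the fifth couples $x_r^*\wedge y_r^*$, $x_s^*\wedge y_s^*$ with the genuinely mixed cross-terms $\xi\, x_r^*\wedge y_s^*$ and $-\xi^{-1}\, x_s^*\wedge y_r^*$, whose asymmetric coefficients $\xi$ versus $-\xi^{-1}$ come precisely from the scalar by which $\sigma_{rs}^{(\xi)}$ acts (it sends $x_i\mapsto \varepsilon x_j$ but $y_i\mapsto -\varepsilon^{-1}y_j$). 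So the plan is to organize the case analysis by the pattern of halves occupied by the relevant indices, and in each case solve the resulting scalar linear system for $\lambda$, the matrix entries, and the $\alpha^g_{rs}$; the $m$ even hypothesis enters because $-1\in\C$ is needed for $t_r^{(-1)}$ and $\sigma_{rs}^{(\xi)}$ to lie in $G(m,1,n)$, and $n\geq 3$ guarantees there is always a third index $r'$ available to run the triple relation (\ref{alphagjk}).
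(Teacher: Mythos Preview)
Your plan is correct and is essentially the paper's own argument: write a general constant cochain, impose equation~(\ref{alphagjk}) for all triples, fix $g=\zeta^\lambda\sigma$, and do a case analysis on $\sigma$ while tracking which half of $V=U\oplus U^*$ each index lies in. Two small corrections to your framing. First, the paper does \emph{not} route through the cycle-type trichotomy of Theorem~\ref{thm: constant cocycles} and then eliminate cases by ``scalar constraints''; instead it exploits the doubled index set directly by testing~(\ref{alphagjk}) on triples such as $(i,j,i+n)$, which immediately forces contradictions (the terms in $v_{i+n},v_{j+n}$ cannot cancel) and thereby rules out transpositions within a single half, all $3$-cycles, and all products of two transpositions in one stroke---so your step~(2) overstates the parallel with Theorem~\ref{thm: constant cocycles}. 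Second, $\Wedge_{\minusone}(V)$ is not commutative: since $q_{r,\,r+n}=1$ one has $x_r\wedge y_r=-y_r\wedge x_r$, so the extension of the $G$-action to $\Wedge_{\minusone}(V)$ is not ``automatic'' for the reason you give (though it does hold, because $G$ never mixes the $x$'s with the $y$'s).
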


\begin{proof}
For each $g\in G(m,1,n)$ and $1\leq r<s\leq 2n$, let $\alpha^g_{rs}\in\CC$ 
and
$$
   \alpha = \sum_{g\in G(m,1,n)} \ \sum_{1\leq r<s\leq 2n}
     \alpha^g_{rs}g \ot v_r^*\wedge v_s^*
$$
be an arbitrary constant cochain. Then $d_3^*(\alpha)=0$
if, and only if, equation (\ref{alphagjk}) holds for all $g$, $i$, $j$, 
$k$.
Fix $g = \zeta ^{\lambda}\sigma$ and suppose $\alpha^g_{ij}\neq 0$ for 
some $i,j$.

First we consider the case $\sigma =1$, so that
$g=\zeta^{\lambda}$ for some $\lambda$, and $g$ acts diagonally on the 
basis
$x_1,\ldots,x_n,y_1,\ldots, y_n$. 
Assume that $1\leq i<j\leq n$. 
In this case, since $\alpha^g_{ij}\neq 0$, as a consequence
of (\ref{alphagjk}) we have $\zeta^{\lambda_k} = q_{ik}q_{jk}$ for each 
$k\not\in\{i,j\}$,
$1\leq k\leq n$, and $\zeta^{-\lambda_k} = q_{ik}q_{jk}$ when $n+1\leq 
k\leq 2n$.
By choosing $k=i+n$ or $k=j+n$ we obtain $\zeta^{-\lambda_i}=-1$, 
$\zeta^{-\lambda_j}=-1$.
Other choices yield $\zeta^{\lambda_k}=1$ when $k\not\in \{i,j\}$.
Thus we obtain the first cocycle in the list. A similar analysis yields the
second cocycle. If we instead assume $1\leq i\leq n$ and $n+1\leq j\leq 
2n$, $j\neq i+n$,
we obtain the third. Finally assume $1\leq i\leq n$ and $j=i+n$. Then for 
all
$k\neq i $ ($1\leq k\leq n$), 
$$
  \zeta^{\lambda_k} = q_{ik}q_{i+n,k} =1
$$
and for all $k\neq i+n$ ($n+1\leq k\leq 2n$),
$$
   \zeta^{-\lambda_k}=q_{ik}q_{i+n,k}=1,
$$
while there is no restriction on $\lambda_i$.
Thus we obtain the fourth cocycle in the list,  completing the
analysis of the case $\sigma=1$.

Next consider the case 
$g=\zeta^{\lambda}\sigma$, $\sigma\neq 1$, and $\alpha_{ij}^g\neq 0$
for some $i,j$. Assume $1\leq i<j\leq n$ and $\sigma = (ij)$.
Consider equation (\ref{alphagjk}) in case $k=i+n$:
$$
  \alpha_{j,i+n}^g(v_i - q_{ij}q_{i,i+n} \zeta^{\lambda_j}v_j)
   - \alpha^g_{i,i+n}(q_{ij}v_j - q_{j,i+n} \zeta^{\lambda_i} v_i)
   + \alpha^g_{ij} (q_{i,i+n} q_{j,i+n} v_{i+n} - \zeta^{-\lambda_j}
     v_{j+n}) =0.
$$
Since $\alpha_{ij}^g\neq 0$, we must have
$$
   q_{i,i+n} q_{j,i+n} v_{i+n} - \zeta^{-\lambda_j} v_{j+n} = 0,
$$
  which is not possible. Therefore this case cannot occur. 
Similarly if $n+1\leq i<j\leq 2n$ and $\sigma = (i-n, j-n)$, we arrive
at a contradiction.
Assume now that $1\leq i\leq n$, $n+1\leq j\leq 2n$,
and $\sigma = (i, j-n)$. A similar analysis forces $\alpha^g_{j-n, i+n}$, 
$\alpha^g_{j-n, j}$,
and $\alpha^g_{i,i+n}$ all to be nonzero and one obtains the fifth cocycle
in the list.

Finally assume that $\sigma \neq 1$, $\alpha^g_{ij}\neq 0$ for some $i,j$, 
and $\sigma$
is {\em not} a 2-cycle moving only $i,j$. 
Then there is some $k\not\in \{ i , j \}$, $1\leq k\leq n$,
for which $\sigma(k)\neq k$. Assume first that $1\leq i<j\leq n$.
Applying equation (\ref{alphagjk}) to $i,j,k$,
we see that $\sigma(k)$ is forced  to be either $i $ or $j$.
Without loss of generality assume $\sigma(k) = i$. Then 
$\zeta^{\lambda_i}\alpha_{ij}^g
= \alpha^g_{jk}$, and again equation (\ref{alphagjk}) forces either one of 
the
following two possibilities:
\begin{itemize}
\item[(1)] $\sigma(i)=k$, $\alpha_{jk}^g q_{ij} \zeta^{\lambda_k}
     = \alpha^g_{ij} q_{jk}$, and $\alpha^g_{ik}(q_{ij} v_j - q_{jk}
     \lexp{g}{v_j}) =0$, or
\item[(2)] $\sigma(i)=j$, $\sigma(j)=k$, $\alpha^g_{jk} q_{ik} 
\zeta^{\lambda_j}
    =\alpha_{ik}^g$, and $\alpha_{ik}^g \zeta^{\lambda_k} = 
\alpha^g_{ij}q_{ik}$.
\end{itemize}
In case (1), further consider equation (\ref{alphagjk}) with $k$ replaced 
by
$i + n$:
$$
  \alpha^g_{j,i + n} (v_i - q_{ij}q_{i,i+n} \zeta^{\lambda_k} v_k)
   - \alpha^g_{i,i+ n} (q_{ij} v_j - q_{j,i+n} \lexp{g}{v_j}) 
      + \alpha_{ij}^g (q_{i,i+n} q_{j,i+n} v_{i+ n} - \zeta^{-\lambda_k}
    v_{k+ n} ) = 0 .
$$
This forces $\alpha_{j,i+ n}^g=0$ since $v_i,v_k$ cannot both occur in 
other
terms of this equation. However, since also $j\not\in \{ i+n, k+n\}$ and 
$\alpha_{ij}^g\neq 0$,
the equation cannot hold. Therefore this case does not occur. 
A similar analysis applies if we assume $n+1\leq i<j\leq 2n$ or if $1\leq 
i\leq n$
and $n+1\leq j\leq 2n$.

In case (2), consider equation (\ref{alphagjk}) applied to indices $i,j,i+ 
n$:
$$
  \alpha_{j,i+ n}^g (v_i - q_{ij} q_{i,i+ n} \zeta^{\lambda_j} v_j)
  - \alpha^g_{i,i+ n} (q_{ij} v_j -q_{j,i+ n} \zeta^{\lambda_k}v_k)
   + \alpha_{ij}^g (q_{i,i+ n} q_{j,i+ n} v_{i+ n} - \zeta^{- \lambda_j}
  v_{j+ n})=0.
$$
Since $\alpha_{ij}^g\neq 0$ and $v_{i+ n}$, $v_{j+ n}$ do not occur in 
other
terms in this equation, the equation in fact does not hold. Therefore
there is no such cocycle. A similar analysis applies if we assume
$n+1\leq i<j\leq 2n$ or if $1\leq i\leq n$ and $n+1\leq j\leq 2n$.
\end{proof}


\begin{lemma} \label{m2'}
For each 2-cocycle $\alpha$ from the list in Theorem~\ref{thm: constant 
cocycles'},
the image of $v_i \ot v_j \; (i \neq j)$ under $\left[(\Theta_2^* \R_2  
\Psi_2^*)(\alpha)\right]$ is, respectively, given by
$$
\begin{aligned}
 & 0 \\
 & 0 \\
 & 0 \\
 & \begin{cases}
   x_i \ot x_j &\mapsto \; 0\\
   y_i \ot y_j &\mapsto \; 0\\
   y_j \ot x_i &\mapsto \; 0\\
   x_i \ot y_j &\mapsto \; 0 \qquad (i \neq j)\\
   x_i \ot y_i &\mapsto \; \frac{1}{n}t_i^{(\varepsilon)}
   \end{cases} \\
 & \begin{cases}
   x_i \ot x_j &\mapsto \; 0\\
   y_i \ot y_j &\mapsto \; 0\\
   y_j \ot x_i &\mapsto \; 0\\
   x_i \ot y_j &\mapsto \; \ds \frac{2}{mn(n-1)} \sum_{\eta \in \C} 
\eta \sigma_{ij}^{(\eta)} \qquad (i \neq j)\\
   x_i \ot y_i &\mapsto \; \ds \frac{2}{mn(n-1)} 
\sum_{\substack{\eta \in \C \\ j \neq i}} 
\sigma_{ij}^{(\eta)}
   \end{cases}
\end{aligned}
$$
\end{lemma}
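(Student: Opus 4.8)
The only tool needed is formula~\eqref{composition}, combined with the description of $\Psi_2$ in~\eqref{psi-two} and the fact that every element of $G(m,1,n)$ acts monomially on the basis $x_1,\dots,x_n,y_1,\dots,y_n$, respecting the dual-action relation $\lexp{g}{y_a}=\varepsilon^{-1}y_b$ whenever $\lexp{g}{x_a}=\varepsilon x_b$. The plan is to substitute each of the five cocycles from Theorem~\ref{thm: constant cocycles'} into~\eqref{composition}, identify for each choice of $v_i\ot v_j$ exactly which group elements $g$ contribute, and then carry out the resulting finite sum.

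The first step is a ``shape'' reduction. Since $\lexp{g^{-1}}{v_i}$ and $\lexp{g^{-1}}{v_j}$ are scalar multiples of basis vectors having the same shape ($x$ or $y$) and subscript pattern as $v_i$ and $v_j$, and since $\Psi_2(1\ot c\,v_a\ot c'v_b\ot 1)=cc'\,(1\ot 1\ot v_a\wedge v_b)$ when $a<b$ and $0$ when $a\ge b$, while $\alpha$ pairs nontrivially only with those $v_a\wedge v_b$ actually occurring in $\alpha$, most cases collapse immediately. For the first three cocycles the relevant $\wedge$-factor is $x_r^*\wedge x_s^*$, $y_r^*\wedge y_s^*$, or $x_r^*\wedge y_s^*$ with $r\neq s$; in each situation the group element in $\alpha$ is diagonal, its conjugate $\lexp{g}{(-)}$ is the same for every contributing $g$, and the residual scalar $cc'$ equals $\zeta$ raised to one coordinate of $\lambda$ times $\zeta^{\pm1}$ raised to a different coordinate. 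Summing over a free coordinate of $\lambda\in(\Z/m\Z)^n$ then kills everything, since $\sum_{c\in\Z/m\Z}\zeta^c=0$ for $m\ge2$; this yields the first three zeros. For the fourth and fifth cocycles the same analysis, now using the ordering constraint in $\Psi_2$ to rule out $v_i$ a $y$ and $v_j$ an $x$, leaves only the cases $v_i=x_i,\,v_j=y_j$ with $i\neq j$, and $v_i=x_i,\,v_j=y_i$; all other images vanish, matching the first three lines in the tables for the fourth and fifth cocycles.

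The second step is the evaluation of these two remaining cases. Here I would: (a) describe the contributing set, namely the $g=\zeta^\lambda\sigma$ for which $\sigma^{-1}$ carries $\{i,j\}$ (or $\{i\}$, when $i=j$) onto one of the index pairs (or singletons) occurring in the $\wedge$-part of $\alpha$; (b) compute the residual scalar, using the dual-action relation to get $cc'=1$ exactly when the contributing index pair $(a,b)$ has $a=b$ (the $t_i^{(\varepsilon)}$ case and the $x_i\ot y_i$ case of the fifth cocycle) and $cc'$ a free power of $\zeta$ otherwise; and (c) compute the conjugates, $\lexp{g}{t_r^{(\varepsilon)}}=t_{\sigma(r)}^{(\varepsilon)}$ and $\lexp{g}{\sigma_{rs}^{(\xi)}}=\sigma_{\sigma(r)\sigma(s)}^{(\zeta^{\lambda_{\sigma(s)}-\lambda_{\sigma(r)}}\xi)}$, together with the reindexing identity $\sigma_{ba}^{(\eta)}=\sigma_{ab}^{(-\eta^{-1})}$. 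Feeding these in: the fourth cocycle has constant conjugate and $cc'=1$, so its value is $(\#\{g\})/|G|\cdot t_i^{(\varepsilon)}=\tfrac1n\,t_i^{(\varepsilon)}$; for the fifth cocycle at $x_i\ot y_i$ one has $cc'=1$, and summing the single free coordinate of $\lambda$ the power $\zeta^{\lambda_{\sigma(s)}-\lambda_i}\xi$ runs exactly once over all of $\C$, yielding $\sum_{\varepsilon\in\C}\sigma_{ij}^{(\varepsilon)}$ for each $j=\sigma(s)\neq i$; counting the remaining free data ($m^{n-1}$ for $\lambda$, $(n-2)!$ for $\sigma$, a factor $2$ from the two ways $\sigma^{-1}(i)$ can lie in $\{r,s\}$) and dividing by $|G|=m^n n!$ gives the coefficient $\tfrac{2}{mn(n-1)}$; finally the fifth cocycle at $x_i\ot y_j$, $i\neq j$, is the same computation, with the extra scalar $\varepsilon$ surviving from the now-nontrivial $cc'=\zeta^{\lambda_j-\lambda_i}$, producing $\tfrac{2}{mn(n-1)}\sum_{\varepsilon\in\C}\varepsilon\,\sigma_{ij}^{(\varepsilon)}$.

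The main obstacle will be the bookkeeping in the second step: pinning down the contributing sets of $g$ precisely, tracking the monomial scalars through $\Psi_2$ and through conjugation without sign errors, and checking that the reindexing identities (especially $\sigma_{ba}^{(\eta)}=\sigma_{ab}^{(-\eta^{-1})}$ and the doubling from $\sigma^{-1}(i)\in\{r,s\}$) assemble into exactly the stated constants and $\C$-sums. Conceptually nothing is deep, but keeping $\lambda$ and $\sigma\cdot\lambda$ straight, and keeping track of the two ``halves'' of each symplectic pair, requires care.
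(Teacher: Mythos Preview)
Your approach is correct and is exactly the method the paper intends: the paper omits the proof of this lemma entirely, but its proof of the analogous Lemma~\ref{lemma: image} (for the natural representation) uses precisely formula~\eqref{composition}, expands $\Psi_2$ via~\eqref{psi-two}, and exploits a vanishing character sum $\sum_{l\in\Z/m\Z}\zeta^l=0$ to kill the first several cases. Your ``shape reduction'' and explicit bookkeeping for the two surviving cases (counting contributing $g$'s, tracking the conjugation formula $\lexp{g}{\sigma_{rs}^{(\xi)}}=\sigma_{\sigma(r)\sigma(s)}^{(\zeta^{\lambda_{\sigma(s)}-\lambda_{\sigma(r)}}\xi)}$, and invoking $\sigma_{ba}^{(\eta)}=\sigma_{ab}^{(-\eta^{-1})}$) are exactly the missing details, and your constants check out.
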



Combining Theorems \ref{thm: SW2}, \ref{thm: constant}, \ref{thm: constant 
cocycles'}, and Lemma~\ref{m2'}, we obtain:

\begin{theorem} \label{kappas}
Assume that $m$ is even and $n \geq 3$.
The maps $\kappa:V \times V \to \CC G(m,1,n)$ for which $\mH_{{\bf q}, 
\kappa }$ is a quantum Drinfeld Hecke algebra form
an $(m+1)$-dimensional vector space with basis consisting of maps $V 
\times V \to \CC G(m,1,n)$ determined by their 
effect on pairs $(x_i,x_j), (y_i,y_j), (x_i,y_j), (x_i,y_i)$, $i \neq j$, 
given by
\[
\begin{aligned}
 &\begin{cases}
  (x_i,x_j) \mapsto 0\\
  (y_i,y_j) \mapsto 0\\
  (x_i,y_j) \mapsto 0\\
  (x_i,y_i) \mapsto t_i^{(\varepsilon)}
  \end{cases}\\
 &\begin{cases}
  (x_i,x_j) \mapsto 0 \\
  (y_i,y_j) \mapsto 0\\
  (x_i,y_j) \mapsto \ds  \sum_{\eta \in \C} \eta 
\sigma_{ij}^{(\eta)} \\
  (x_i,y_i) \mapsto \ds \sum_{\substack{\eta \in \C \\ j \neq i}} 
\sigma_{ij}^{(\eta)}.
 \end{cases}
\end{aligned}
\]
\end{theorem}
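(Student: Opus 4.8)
The plan is to chain the four cited results together exactly as the sentence preceding the statement indicates, writing $A=S_{\minusone}(V)$ and $G=G(m,1,n)$. The action of $G$ on $V=U\oplus U^*$ extends to $\Wedge_{\minusone}(V)$ by algebra automorphisms --- this is straightforward to check from the criterion in the lemma preceding Lemma~\ref{two-actions}, since the elements of $G(m,1,n)$ act by monomial matrices whose underlying permutations respect the pairing $x_i\leftrightarrow y_i$, and it is in any case implicit in Theorem~\ref{thm: constant cocycles'}, which already expresses cocycles via the quantum Koszul complex --- so Theorems~\ref{thm: SW2} and \ref{thm: constant} are available. First I would recall from Theorem~\ref{thm: constant cocycles'} the five-family basis of the space of constant cochains in $\bigoplus_{g\in G}\HH^2(A,A_g)$ lying in $\Ker d_3^*$. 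Because $\Psi_2$ is degree-preserving, the map $\Theta_2^*\R_2\Psi_2^*$ of Theorem~\ref{thm: SW2} carries the constant part of $\bigoplus_g\HH^2(A,A_g)$ onto the constant part of $\HH^2(A\rtimes G)$; hence the latter is spanned by the images of the five basis families.

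Next I would evaluate those images using Lemma~\ref{m2'}. The first three families go to $0$, so they contribute nothing. The fourth family gives, for each $\varepsilon\in\C$, a cocycle $\mu^{(\varepsilon)}$ that --- up to the nonzero scalar $\tfrac1n$ --- is supported on $x_i\ot y_i\mapsto t_i^{(\varepsilon)}$; the fifth family gives one further cocycle $\mu'$, supported up to the nonzero scalar $\tfrac{2}{mn(n-1)}$ (nonzero since $m\ge1$, $n\ge3$) on $x_i\ot y_j\mapsto\sum_{\varepsilon\in\C}\varepsilon\,\sigma_{ij}^{(\varepsilon)}$ for $i\neq j$ and on $x_i\ot y_i\mapsto\sum_{j\neq i,\ \varepsilon\in\C}\sigma_{ij}^{(\varepsilon)}$. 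These $m+1$ cocycles are linearly independent: the $t_i^{(\varepsilon)}=\zeta_i^{\ell}$ (where $\varepsilon=\zeta^{\ell}$) are $m$ distinct diagonal elements of $G$, whereas $\mu'$ is supported on the non-diagonal elements $\sigma_{ij}^{(\varepsilon)}$, so no nontrivial combination vanishes. Therefore the constant part of $\HH^2(A\rtimes G)$ has dimension $m+1$.

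Finally I would pass from cohomology to the maps $\kappa$. By the proof of Theorem~\ref{thm: constant}, a map $\kappa:V\times V\to\CC G$ makes $\mH_{\minusone,\kappa}$ a quantum Drinfeld Hecke algebra precisely when $\sum_g\sum_{r<s}\kappa_g(v_r,v_s)\,g\ot v_r^*\wedge v_s^*$ is a $G$-invariant element of $\Ker d_3^*$, i.e.\ a $G$-invariant constant Koszul $2$-cocycle; and since $\Im d_2^*$ meets the constant cochains only in $0$ (Section~\ref{two resolutions}), such cocycles are identified by $\Theta_2^*\R_2\Psi_2^*$ with the constant part of $\HH^2(A\rtimes G)$. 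Hence the $\kappa$ in question form a vector space of dimension $m+1$, and a basis is obtained by running $\mu^{(\varepsilon)}$ and $\mu'$ backwards through $\sum_g\kappa_g(v_j,v_i)\,g=\mu(v_j\ot v_i)-q_{ji}\,\mu(v_i\ot v_j)$ (the relation displayed at the end of Section~\ref{QDHAs and deformations}), using $q_{i,n+i}=1$ and $q_{i,n+j}=-1=q_{ij}$ in the relevant ranges together with $\kappa(v_i,v_j)=-q_{ij}^{-1}\kappa(v_j,v_i)$ to re-express everything on the pairs $(x_i,x_j)$, $(y_i,y_j)$, $(x_i,y_j)$, $(x_i,y_i)$; after discarding the overall scalars one reads off exactly the $m+1$ maps displayed. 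I expect the main obstacle to be the index and sign bookkeeping in this last step --- tracking the two index blocks $\{1,\dots,n\}$ and $\{n+1,\dots,2n\}$, the values of $q_{ij}$, and the $g^{-1}$-twist in~\eqref{composition} --- which is routine but error-prone; everything else is a direct application of the four cited results.
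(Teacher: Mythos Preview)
Your proposal is correct and follows exactly the route the paper takes: the paper's ``proof'' is literally the single sentence ``Combining Theorems~\ref{thm: SW2}, \ref{thm: constant}, \ref{thm: constant cocycles'}, and Lemma~\ref{m2'}, we obtain,'' and you have simply spelled out what that combination means, including the linear-independence check and the passage from cocycles to the maps $\kappa$. Your added observations (that the first three families die, that the fourth gives $m$ distinct classes indexed by $\varepsilon$ while the fifth collapses to a single class, and that $\Im d_2^*$ meets the constant cochains trivially) are exactly the points one must verify to make the one-line combination honest.
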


\begin{remark}
Let $\C'$ be a subgroup of $\C$, the group 
of $m$th roots of unity ($m$ even), and let
$W_{\C,\C'}$ denote the subgroup of $G(m,1,n)$ generated by all 
$\sigma_{ij}^{(\varepsilon)}, \, \varepsilon \in \C$
and $t_i^{(\varepsilon')}, \, \varepsilon' \in \C'$. Let $c:\C' \to \CC$ 
be any function, and
$c_{\varepsilon'}:= c(\varepsilon ')$.
In \cite{BB}, Bazlov and Berenstein defined algebras 
$\mathcal{H}_c(W_{\C,\C'})$ with generators $x_1,x_2,\ldots x_n, 
y_1,y_2,\ldots y_n$
subject to the following relations
\begin{enumerate}
\item[(i)] $x_ix_j + x_jx_i = y_iy_j + y_jy_i = 0$ for all $i \not = j$;
\item[(ii)] $gx_ig^{-1} = \lexp{g}{x_i}, \, gy_ig^{-1} = \lexp{g}{y_i}$ 
for all $g \in W_{\C,\C'},\, i = 1, \ldots, n$;
\item[(iii)] $\ds y_jx_i + x_iy_j = c_1 \sum_{\varepsilon \in \C} 
\varepsilon \sigma_{ij}^{(\varepsilon)}$ for all $i \not = j$, and
\item[] $\ds y_ix_i - x_iy_i = 1 + c_1 \sum_{\substack{\varepsilon \in \C 
\\ j \neq i}} \sigma_{ij}^{(\varepsilon)}
         + \sum_{\varepsilon' \in \C' \setminus \{1\}} c_{\varepsilon'} 
t_i^{(\varepsilon')}$ for $i = 1, \ldots, n$.
\end{enumerate}

Let $f_\varepsilon, \,  \varepsilon \in \C$, denote the maps of the first 
type in Theorem~\ref{kappas} and let
$\tilde{f}$ denote the map of the second type in Theorem~\ref{kappas}.
The algebras $\mathcal{H}_c(W_{\C,\C'})$ are precisely the quantum 
Drinfeld Hecke algebras arising from the map
$
 \left(f_1 + \sum_{\varepsilon' \in \C'\backslash \{1\}} \left(
c_{\varepsilon'} f_{\varepsilon'} + c_1\tilde{f}\right)\right):V \times V 
\to \CC W_{\C,\C'}.
$

\end{remark}

\end{section}



\end{document}